\documentclass[11pt]{article}
\usepackage{template}
\newcommand{\MP}{\mathrm{MP}}
\renewcommand{\supp}{\operatorname{supp}}

\title{\Large\bfseries
Integrability of Freely Infinitely Divisible Distributions and L{\'e}vy Measures}
\author{Yu Kitagawa\thanks{Department of Mathematics,
Faculty of Science,
Kyoto University,
Kitashirakawa Oiwake-cho, Sakyo-ku,
Kyoto 606-8502, Japan\\
Email address: kitagawa.yu.57z@st.kyoto-u.ac.jp}}
\date{}

\begin{document}
\maketitle

\begin{abstract}
Under a growth condition on an increasing function $g$, we prove that integrability of a freely infinitely divisible distribution with respect to $g$ is equivalent to that of the large-jump part of its free L{\'e}vy measure. For every increasing freely submultiplicative function $g$, integrability of the distribution implies integrability of the large-jump part of its free L{\'e}vy measure. We also obtain two-sided tail comparisons between freely infinitely divisible distributions and their free L{\'e}vy measures, uniform integrability along free convolution semigroups, and analogous results for fractional free convolution powers.
\end{abstract}

\tableofcontents

\section{Introduction}\label{sec:introduction}

Infinitely divisible distributions and L{\'e}vy processes are among the fundamental objects of classical probability theory. Their free counterparts have been studied since the early development of free probability, beginning with free additive convolution and its analytic transforms and continuing with free L{\'e}vy--Khintchine representations and free L{\'e}vy processes \cite{Voiculescu1986,BV1992,VDN1992,BV1993free,Biane1998,BNT2002,BnT2005Levy,BNT2006}. A central connection between classical and free probability is the Bercovici--Pata bijection \cite{BP1999}, which relates classically and freely infinitely divisible laws through their characteristic triplets.

We recall the two forms of the free L{\'e}vy--Khintchine representation used throughout the paper. If $\mu$ is freely infinitely divisible, its Voiculescu transform admits the representation
\begin{equation}\label{eq:free-generating-pair}
\phi_\mu(z)=\gamma+\int_\RR\frac{1+xz}{z-x}\di{\sigma}(x),\quad z\in\CC^+,
\end{equation}
where $\gamma\in\RR$ and $\sigma$ is a finite positive measure. The pair $(\gamma,\sigma)$ is called the free generating pair, and $\sigma$ is called the free generating measure. Equivalently, the free cumulant transform $C_\mu^{\boxplus}(z)=z\phi_\mu(1/z)$ has the form
\begin{equation}\label{eq:free-triplet}
C_\mu^{\boxplus}(z)=\eta z+az^2+\int_\RR\left(\frac{1}{1-zx}-1-zx\1_{[-1,1]}(x)\right)\di{\nu}(x),\quad z\in\CC^-,
\end{equation}
where $\eta\in\RR$, $a\ge0$, and $\nu$ is a L{\'e}vy measure, that is, $\nu(\{0\})=0$ and
$\int_\RR\min\{1,x^2\}\di{\nu}(x)<\infty$. The parameters $(\eta,a,\nu)$ form the free characteristic triplet of $\mu$, and $\nu$ is its free L{\'e}vy measure.

The free generating measure and the free L{\'e}vy measure are related by
\begin{equation}\label{eq:pair-triplet}
\sigma(\{0\})=a,\quad \di{\sigma}(x)=\frac{x^2}{1+x^2}\di{\nu}(x)\quad\text{on }\RR\setminus\{0\}.
\end{equation}
We write $\ID^{\boxplus}(\RR)$ for the class of freely infinitely divisible probability measures on $\RR$. The Bercovici--Pata bijection sends a classically infinitely divisible law with characteristic triplet $(\eta,a,\nu)$ to the freely infinitely divisible law with the same triplet.

Several structural properties of this bijection are known. For probability measures $\mu_n$ and positive integers $k_n\uparrow\infty$, \cite[Theorem~3.4]{BP1999} shows that weak convergence of $\mu_n^{*k_n}$ to a classically infinitely divisible law is equivalent to weak convergence of $\mu_n^{\boxplus k_n}$ to its image under this bijection. It intertwines classical and free convolution, commutes with affine transformations, and is a homeomorphism with respect to weak convergence \cite[Theorem~3.5 and Corollaries~3.6 and~3.9]{BNT2002}. The bijection also maps classically selfdecomposable laws onto freely selfdecomposable laws \cite[Theorem~4.8]{BNT2002}. Random-matrix models realizing the bijection were constructed in \cite{BG2005} and \cite{CD2005}.

Moment and integrability questions arise throughout free probability. Finite moments are closely related to Taylor expansions of the $R$-transform \cite{BG2006}, and logarithmic integrability is important in the study of Fuglede--Kadison determinants and Brown measures along free convolution semigroups \cite{KPZ2026}. On the classical side, integrability of an infinitely divisible law with respect to a locally bounded submultiplicative function is characterized by integrability of that function against the large-jump part of the L{\'e}vy measure; see \cite[Theorem~25.3]{Sato2013}.

Partial analogues of this classical result were previously known in free probability. For every positive integer $p$, finiteness of the $p$th moment of the free generating measure implies finiteness of the $p$th moment of the freely infinitely divisible law \cite[Proposition~2.3]{BG2006}. The converse was proved there when $p$ is even. Moreover, the compactness of the support of the law is equivalent to that of the generating measure \cite{BV1992,HP2000}.

In free probability, a broader framework was introduced in \cite{CD2005}. For a finite positive measure $\lambda$ on $\RR$ and a nonnegative function $g$ on $[0,\infty)$, write $M_g(\lambda)=\int_\RR g(|x|)\di{\lambda}(x)$. A function $g\colon[0,\infty)\to[0,\infty)$ is called freely submultiplicative if there exists $K_g>0$ such that
\begin{equation*}\tag{FSM}\label{eq:free-submult}
M_g(\mu_1\boxplus\mu_2)\le K_gM_g(\mu_1)M_g(\mu_2)
\end{equation*}
for all probability measures $\mu_1,\mu_2$ on $\RR$. Testing \eqref{eq:free-submult} on Dirac measures shows that
\begin{equation*}\tag{SM}\label{eq:ordinary-submult}
g(r+s)\le K_gg(r)g(s),\quad r,s\ge0,
\end{equation*}
so every freely submultiplicative function is submultiplicative (up to a constant). The functions $1+r^p$ for $p>0$, $e^r$, and $\log(e+r)$ are freely submultiplicative \cite[Proposition~V.2]{CD2005}. In this framework, \cite[Theorem~V.6]{CD2005} asserts that integrability of the free generating measure implies integrability of the law for every locally bounded freely submultiplicative function. However, as explained in \Cref{rem:CD-gap}, the published proof has a gap.

Our first main result establishes the reverse implication for increasing freely submultiplicative functions.

\begin{theorem}\label{thm:intro-law-to-levy}
Let $\mu\in\ID^{\boxplus}(\RR)$ have free generating pair $(\gamma,\sigma)$ and free characteristic triplet $(\eta,a,\nu)$. Let $g\colon[0,\infty)\to[1,\infty)$ be increasing and freely submultiplicative. Then
\[M_g(\mu)<\infty\quad\Longrightarrow\quad M_g(\sigma)<\infty\quad\Longleftrightarrow\quad\int_{|x|>1}g(|x|)\di{\nu}(x)<\infty.\]
\end{theorem}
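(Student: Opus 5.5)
The equivalence $M_g(\sigma)<\infty\iff\int_{|x|>1}g(|x|)\di{\nu}(x)<\infty$ is elementary, so I dispose of it first. By \eqref{eq:pair-triplet} and $1\le g$ increasing, $g$ is bounded on $[0,1]$ by $g(1)$ and $\sigma$ is finite. Hence $\int_{[-1,1]}g(|x|)\di\sigma(x)\le g(1)\sigma(\RR)<\infty$. On $|x|>1$ the density $x^2/(1+x^2)$ lies in $[1/2,1]$, so $\int_{|x|>1}g\,\di\sigma$ and $\int_{|x|>1}g\,\di\nu$ are comparable within a factor $2$.

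For the main implication $M_g(\mu)<\infty\Rightarrow M_g(\sigma)<\infty$, I would decompose $\mu=\mu_0\boxplus\mu_1$. Here $\mu_1$ is the free compound Poisson law with L\'evy measure $\nu_1=\nu|_{\{|x|>R\}}$ for a large $R$, and $\mu_0$ has triplet $(\eta',a,\nu|_{[-R,R]})$. Then $\mu_0$ has compactly supported generating measure, so by \cite{BV1992,HP2000} it has compact support.

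The key tool is a reverse use of \eqref{eq:free-submult}. I would like to write $\mu_1=\mu\boxplus\mu_0^{\boxplus(-1)}$, but $\boxplus$-inverses do not exist as measures. Instead I would use the subordination relation $\mu=\mu_0\boxplus\mu_1$ to get a lower tail bound of the form
\[\mu(\{|x|>r\})\ \ge\ c\,\mu_1(\{|x|>r+C\})\]
with $C$ a bound for $\supp\mu_0$. This should follow since adding a compactly supported free summand shifts mass by a bounded amount in the sense of tails; I would use the subordination function $\omega$ with $G_\mu=G_{\mu_1}\circ\omega$ and $|\omega(z)-z|$ bounded, or Bercovici--Voiculescu tail estimates. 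Combining this with \eqref{eq:ordinary-submult}, namely $g(r+C)\le K_g g(r)g(C)$, gives $M_g(\mu_1)<\infty$ from $M_g(\mu)<\infty$.

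It then remains to pass from $\mu_1$ to $\nu_1$. For a free compound Poisson law with rate $\lambda=\nu_1(\RR)$ and jump law $\rho=\nu_1/\lambda$, I would prove directly the tail comparison $\mu_1(\{|x|>r\})\ge c\,\nu_1(\{|x|>r\})$ for large $r$. A heuristic route is a single big jump: at least $\lambda$ times the jump tail survives via the $R$-transform. Concretely, I would compare the Cauchy transform near the real axis at $z=iy$, or use the fact that $\mu_1\ge$ a positive multiple of a $\rho$-related measure obtained from the free Poisson atomic structure.

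Since $g$ is increasing, tail comparison translates to integrability via the layer-cake formula $M_g(\lambda)=g(0)\lambda(\RR)+\int_0^\infty \lambda(\{|x|>r\})\di g(r)$.

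The main obstacle is establishing these lower tail bounds. Free convolution has no positivity like $\mu*\nu\ge \mu(\{0\})\nu$, so the classical argument of \cite[Theorem~25.3]{Sato2013} does not transfer. I would first try to extract a tail bound from the analytic identity $\phi_\mu(z)=\gamma+\int\frac{1+xz}{z-x}\di\sigma(x)$ evaluated at $z=iy$ with $y\to\infty$. Both $\Im\phi_\mu(iy)$ and $\Im$ of $F_\mu^{-1}(iy)-iy$ are controlled by tails of $\mu$, via estimates like $|\phi_\mu(iy)|\lesssim\int\frac{|x|\wedge y}{y}\di\mu$. This yields $\sigma(\{|x|>y\})\lesssim\mu(\{|x|>y/2\})+O(\text{tail of }\mu\text{ at scale }y)$, which then integrates against $\di g$.
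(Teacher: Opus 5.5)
Your proof of $M_g(\sigma)<\infty\Leftrightarrow\int_{|x|>1}g(|x|)\,d\nu(x)<\infty$ is correct. Stripping off the compactly supported part is also correct, and no subordination is needed for it. If $X=X_0+X_1$ with $\|X_0\|\le C$, then \eqref{eq:tail-triangle} applied to $X_1=X+(-X_0)$ gives $\mu_1(\{|x|>r+C\})\le\mu(\{|x|>r\})$ directly, and \eqref{eq:ordinary-submult} then yields $M_g(\mu_1)<\infty$.

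\textbf{The gap.} The problem is the step you yourself call the main obstacle, and it carries the whole content of the theorem. You need a lower bound $\mu_1(\{|x|>r\})\ge c\,\nu_1(\{|x|>r\})$ for a compound free Poisson law whose jump measure $\nu_1=\nu|_{\{|x|>R\}}$ charges both half-lines, and you do not prove it.
\begin{itemize}
\item The ``single big jump'' heuristic is not an argument here. Positive and negative jumps may cancel, and freeness has no analogue of the classical bound $\Pr(P-N>r)\ge\Pr(P>r+s)\Pr(N\le s)$. Free projections satisfy $\tau(e\wedge f)=\max\{0,\tau(e)+\tau(f)-1\}$. Hence $\1_{(r+s,\infty)}(P)\wedge\1_{[0,s]}(N)=0$ unless the tail of $N$ at $s$ is already smaller than the tail of $P$ at $r+s$, and a min--max argument gives nothing.
\item The analytic fallback also fails. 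By \eqref{eq:free-generating-pair}, $-\operatorname{Im}\phi_\mu(iy)/y=a/y^2+\int\frac{x^2}{x^2+y^2}\,d\nu(x)$. Bounds of the kind you describe control this by $\int\min\{|x|/y,1\}\,d\mu(x)$ or $\int\frac{x^2}{x^2+y^2}\,d\mu(x)$. These contain truncated moments such as $y^{-2}\int_{|x|\le y}x^2\,d\mu(x)$, not tails of $\mu$. Such terms are of order at least $y^{-2}$, so they are not integrable against $dg(y)$ once $g$ grows at least quadratically (e.g.\ $g(r)=e^{cr}$).
\item Your plan only ever uses \eqref{eq:ordinary-submult}. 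The dilation-free signed bound it requires is stronger than anything the paper proves for non-symmetric laws. \Cref{prop:upper-levy-tail} carries the dilation $r\mapsto 2r$, which loses a factor $2$ in the exponent when $g(r)=e^{cr}$.
\end{itemize}

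\textbf{What is missing} is a way to eliminate the signs, because with positive jumps the lower bound is easy. If $\lambda$ lives on $[0,\infty)$ and $M$ is a free Poisson random measure with intensity $\lambda$, then:
\begin{itemize}
\item $\int_{[0,R]}x\,dM(x)\ge rM([r,R])$ as operators;
\item $M([r,R])$ has law $\MP_q$ with $q=\lambda([r,R])$;
\item $\MP_q([1,\infty))\ge cq$ for $q\le1$.
\end{itemize}
Together these give $\pi_\lambda([r,\infty))\ge c\lambda([r,\infty))$ (\Cref{lem:compound-tail}).

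The paper reaches the positive setting by symmetrization, and this is exactly where \eqref{eq:free-submult} is used. The law $\rho=\mu\boxplus\check\mu$ satisfies $M_g(\rho)\le K_gM_g(\mu)^2$ and has free L\'evy measure $\nu+\check\nu$. For symmetric $\rho$, \Cref{thm:symmetric-free-regular-correspondence} gives $\rho^2=\MP_1\boxtimes\theta=\pi_\theta$, with $\theta$ free regular and $\nu_\theta=(\nu_\rho)^2$. Set $h(x)=g(\sqrt{x})$. Applying the positive lemma to $\pi_\theta$ gives $\int h\,d\theta<\infty$. Applying it again to the compound free Poisson summand with jump measure $\nu_\theta|_{[1,\infty)}$, which is operator-dominated by $\theta$, gives $\int_{[1,\infty)}h\,d\nu_\theta=\int_{|x|\ge1}g(|x|)\,d\nu_\rho(x)<\infty$.
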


Here and below we assume $g\ge1$. Replacing a nonnegative function $g$ by $1+g$ does not change any integrability condition considered in this paper; if $g$ is freely submultiplicative, then so is $1+g$, possibly with a different constant. Similarly, changes on a compact interval do not affect any of the integrability conditions considered here. Thus any locally bounded function that is eventually increasing may be modified on a compact interval and treated as globally increasing.

Next we introduce a class of functions for which all three finiteness conditions are equivalent. Let $g\colon[0,\infty)\to[1,\infty)$ be increasing, set $D_g(r)=(\int_0^r u/g(u)\di{u})^{1/2}$, and assume that
\begin{equation*}\tag{\(\Delta\)}\label{eq:condition-Delta}
g(r+D_g(r))\le K_\Delta g(r),\quad r\ge0,
\end{equation*}
for some $K_\Delta<\infty$.

\begin{theorem}\label{thm:intro-Delta-equivalence}
Let $\mu\in\ID^{\boxplus}(\RR)$ have free generating measure $\sigma$ and free L{\'e}vy measure $\nu$. If $g\colon[0,\infty)\to[1,\infty)$ is increasing and satisfies \eqref{eq:condition-Delta}, then
\[M_g(\mu)<\infty\quad\Longleftrightarrow\quad M_g(\sigma)<\infty\quad\Longleftrightarrow\quad\int_{|x|>1}g(|x|)\di{\nu}(x)<\infty.\]
Moreover, the class of probability measures with finite $g$-moment is closed under free convolution.
\end{theorem}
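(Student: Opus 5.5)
We will prove the three equivalences in \Cref{thm:intro-Delta-equivalence} by establishing two-sided tail comparisons between $\mu$ and $\sigma$; closure under $\boxplus$ will then follow. The equivalence $M_g(\sigma)<\infty\iff\int_{|x|>1}g(|x|)\,\mathrm{d}\nu(x)<\infty$ is elementary. By \eqref{eq:pair-triplet}, $\sigma$ and $\nu$ are mutually absolutely continuous on $\{|x|>1\}$ with density $x^2/(1+x^2)\in[1/2,1]$, and $\sigma$ is finite while $g$ is bounded on $[0,1]$ because it is increasing. So the real task is $M_g(\mu)<\infty\iff M_g(\sigma)<\infty$.

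For the forward direction $M_g(\mu)<\infty\Rightarrow M_g(\sigma)<\infty$ we do not need \eqref{eq:condition-Delta} in its full strength. We will show a lower tail bound of the form $\sigma(\{|x|>r+D\})\lesssim\mu(\{|x|>r\})$, with a shift $D$ controlled by $D_g$. The plan is to work with the Cauchy transform. Write $G_\mu(z)$ and use the subordination identity $F_\mu(z)=z-\phi_\mu(F_\mu(z))$, where $F_\mu=1/G_\mu$. Evaluating along $z=iy$ and comparing imaginary parts gives
\[\operatorname{Im}F_\mu(iy)-y=\int\frac{(1+x^2)\operatorname{Im}F}{|F-x|^2}\,\mathrm{d}\sigma(x).\]
This yields $\int(1+x^2)/(x^2+y^2)\,\mathrm{d}\sigma\lesssim$ a quantity controlled by $\mu$'s tails at scale $y$. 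Integrating $g$ against such scale-wise bounds, via a layer-cake decomposition in $y$, converts tail information on $\mu$ into $M_g(\sigma)$. An alternative route, used when $g$ is merely freely submultiplicative as in \Cref{thm:intro-law-to-levy}, also works. There one decomposes $\mu=\mu_{\rm small}\boxplus\mu_{\rm large}$, where the large part is free compound Poisson with Lévy measure $\nu|_{|x|>R}$. One then uses \eqref{eq:free-submult} together with the fact that $\mu_{\rm small}$ has compact support, so that $M_g$ of a free compound Poisson law dominates $\int_{|x|>R} g\,\mathrm{d}\nu$ up to constants. If \Cref{thm:intro-law-to-levy} is available, the forward implication can be quoted directly, since \eqref{eq:condition-Delta} forces submultiplicativity-type growth only at scale $D_g$. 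To be safe, we plan the Cauchy-transform route instead.

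For the reverse direction $M_g(\sigma)<\infty\Rightarrow M_g(\mu)<\infty$ we aim for an upper tail bound
\[\mu(\{|x|>r+C D_g(r)\})\le C\Bigl(\sigma(\{|x|>r\})+\tfrac{\sigma(\mathbb R)}{?}\cdot(\text{tail term})\Bigr).\]
We obtain it by splitting $\nu=\nu|_{|x|\le r}+\nu|_{|x|>r}$, which gives $\mu=\mu_1^{(r)}\boxplus\mu_2^{(r)}$. The small part $\mu_1^{(r)}$ has compact support, and its support radius is bounded by $r$-independent constants plus roughly the square root of its variance, $\bigl(\int_{|x|\le r}x^2\,\mathrm{d}\nu\bigr)^{1/2}$. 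Here the free setting is favorable: the support is controlled via the semicircle-type edge, with radius of order $\sqrt{\text{variance}}+\max|x|$. The large part is free compound Poisson with mass $\nu(|x|>r)$, so $\mu_2^{(r)}$ puts mass at most $\lesssim\nu(|x|>r)$ outside a bounded neighborhood of $0$. A free convolution with a measure supported in $[-s,s]$ shifts tails by at most $s$, up to constants. The shift that appears is exactly of size $D$. Because $g$ is integrated, we replace the crude variance $\int_{|x|\le r}x^2\,\mathrm{d}\nu$ by the weighted quantity $\int_0^r u/g(u)\,\mathrm{d}u$ through an Abel summation against $M_g(\sigma)$; this is where $D_g$ enters. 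Then \eqref{eq:condition-Delta} gives $g(r+D_g(r))\le K_\Delta g(r)$, and integrating the tail bound against $\mathrm{d}g$ yields $M_g(\mu)\le C(1+M_g(\sigma))$.

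The main obstacle is the reverse tail bound. The difficulty is making the support-radius estimate for $\mu_1^{(r)}$ uniform and then controlling how free convolution with the compound Poisson part propagates tails. I would try first to prove a lemma stating that $\mu\boxplus\rho(\{|x|>t+s\})\le C\mu(\{|x|>t\})$ whenever $\supp\rho\subset[-s,s]$, using subordination $G_{\mu\boxplus\rho}=G_\mu\circ\omega$ with $|\omega(z)-z|\lesssim s$. Finally, closure under $\boxplus$ follows because free generating measures add, $\sigma_{\mu_1\boxplus\mu_2}=\sigma_1+\sigma_2$, so finiteness of $M_g$ transfers through the equivalence.
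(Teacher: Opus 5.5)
\textbf{Gap 1: the direction $M_g(\mu)<\infty\Rightarrow M_g(\sigma)<\infty$.} None of the three routes you sketch works as stated.

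Your Cauchy-transform identity is correct, but everything it produces at height $y$ is a Poisson-type average. For $y\ge1$ one has $\int\frac{1+x^2}{x^2+y^2}\,\mathrm{d}\sigma(x)\ge\sigma(\RR)\,y^{-2}$, with equality for the semicircle law, where $\sigma=\delta_0$. So any scale-wise bound you integrate against $\mathrm{d}g(y)$ is at least of order $y^{-2}$. But $\int^\infty y^{-2}\,\mathrm{d}g(y)=\infty$ for $g(r)=(1+r)^3$ or $g(r)=e^{cr}$, both of which satisfy \eqref{eq:condition-Delta}. The layer-cake step therefore fails even for compactly supported $\mu$.

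The fallbacks do not help. \Cref{thm:intro-law-to-levy} cannot be quoted and \eqref{eq:free-submult} is unavailable, because \eqref{eq:condition-Delta} does not even imply \eqref{eq:ordinary-submult}; see $e^{cr}(1+r)^{-\beta}$ in \Cref{rem:SM-Delta-comparison}. In the decomposition route, peeling off a compactly supported factor is harmless, via $J=X-Y$ and \eqref{eq:tail-triangle}. The problem is the claim that $M_g$ of a compound free Poisson law controls $\int_{|x|>R}g\,\mathrm{d}\nu$. When the jump measure has both signs, positive and negative jumps can cancel, and you give no argument for this step.

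The paper handles it by symmetrizing, $\rho=\mu\boxplus\check\mu$; this step itself needs closure of $g$-integrability under $\boxplus$. It then passes to $\rho^2=\MP_1\boxtimes\theta$ with $\theta$ free regular and $\nu_\theta=(\nu_\rho)^2$, using \eqref{eq:ahs-relations}. Finally it uses positivity. The law $\theta$ stochastically dominates its large-jump compound Poisson component. For positive jump measures, \Cref{lem:compound-tail} gives $\pi_\lambda([r,\infty))\ge c\lambda([r,\infty))$ from a Marchenko--Pastur lower bound.

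\textbf{Gap 2: the reverse direction and closure.} For $M_g(\sigma)<\infty\Rightarrow M_g(\mu)<\infty$, your splitting skeleton $\mu(\{|x|>\|Y_r\|\})\le\nu(\{|x|>r\})$ is sound. Your Abel-summation bound $\int_{1<|x|\le r}x^2\,\mathrm{d}\nu\lesssim L\,D_g(r)^2$ is also right. What is missing is the decisive estimate $\|Y_r\|\le r+O(1)+O_L(D_g(r))$, with coefficient exactly one in front of $r$. This is essential because \eqref{eq:condition-Delta} gives no control of $g(Cr)/g(r)$ for $C>1$; for $g=e^{cr}$ that ratio is unbounded. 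A support radius ``of order $\sqrt{V}+\max|x|$'' covers only doubling $g$.

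The mechanism that gives the sharp constant is the one in \Cref{lem:positive-cp-moment}:
\begin{itemize}
\item write the positive jumps through a free Poisson random measure as $\int_0^x M((u,R])\,\mathrm{d}u$, plus a part living on a projection of trace $W(x)$;
\item use $\|M(E)\|\le(1+\sqrt{\lambda(E)})^2$, so the linear terms add up to exactly $x$;
\item bound the excess $3\int_0^x\sqrt{W(u)}\,\mathrm{d}u$ by $3\sqrt{2L}\,D_g(x)$ via \Cref{lem:tail-integral}.
\end{itemize}
With that bound your route does close. Split off a fixed compact part, bound the jumps in $R_0<|x|\le r$ of each sign as above, and combine them via $-P^-\le P^+-P^-\le P^+$. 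Your subordination lemma is unnecessary: \eqref{eq:tail-triangle} already gives the shift by $s$ with constant one.

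Closure under $\boxplus$ via $\sigma_{\mu_1\boxplus\mu_2}=\sigma_1+\sigma_2$ covers only freely infinitely divisible measures. The statement concerns all probability measures with finite $g$-moment. The paper proves it for arbitrary free selfadjoint operators in \Cref{cor:free-sum-stability}. The ingredients are Aubrun's bound $\|P_u+Q_u\|\le1+\sqrt{\tau(P_u)}+\sqrt{\tau(Q_u)}$ for free spectral projections, \Cref{lem:tail-integral}, and \Cref{lem:quantile-comparison}. This general closure is also exactly what the symmetrization step in Gap 1 requires.
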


The relation between free submultiplicativity and condition \textup{($\Delta$)} is discussed in \Cref{rem:SM-Delta-comparison}. In particular, the condition \textup{($\Delta$)} does not imply free submultiplicativity, while we do not know whether every increasing freely submultiplicative function satisfies \textup{($\Delta$)}.

Combining \Cref{thm:intro-Delta-equivalence} with the classical result \cite[Theorem~25.3]{Sato2013} gives the following integrability equivalence for the Bercovici--Pata bijection.

\begin{corollary}\label{cor:intro-bp}
Let $g\colon[0,\infty)\to[1,\infty)$ be increasing and satisfy \eqref{eq:condition-Delta} and \eqref{eq:ordinary-submult}. If $\mu$ is classically infinitely divisible and $\Lambda(\mu)$ denotes its image under the Bercovici--Pata bijection, then $M_g(\mu)<\infty$ if and only if $M_g(\Lambda(\mu))<\infty$.
\end{corollary}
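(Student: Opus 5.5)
The corollary follows by chaining two equivalences through the common characteristic triplet. Let $\mu$ be classically infinitely divisible with characteristic triplet $(\eta,a,\nu)$. By definition of the Bercovici--Pata bijection, $\Lambda(\mu)$ is the freely infinitely divisible law whose free characteristic triplet is the same $(\eta,a,\nu)$. In particular, $\mu$ and $\Lambda(\mu)$ share the same L\'evy measure $\nu$. The plan is to prove
\[
M_g(\mu)<\infty\iff\int_{|x|>1}g(|x|)\di{\nu}(x)<\infty\iff M_g(\Lambda(\mu))<\infty.
\]

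The second equivalence is exactly \Cref{thm:intro-Delta-equivalence} applied to $\Lambda(\mu)\in\ID^{\boxplus}(\RR)$. This uses that $g\colon[0,\infty)\to[1,\infty)$ is increasing and satisfies \eqref{eq:condition-Delta}.

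For the first equivalence I would invoke \cite[Theorem~25.3]{Sato2013}. For a locally bounded submultiplicative function $h$ on $\RR$ and an infinitely divisible $\mu$, that theorem gives $\int h\,\di\mu<\infty$ if and only if $\int_{|x|>1}h\,\di\nu<\infty$. The work is in checking that $h(x)=g(|x|)$ meets Sato's hypotheses.

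\begin{itemize}
\item \emph{Local boundedness.} Since $g$ is increasing and finite-valued, $h$ is bounded on every compact set by $g$ at the right endpoint.
\item \emph{Measurability.} Monotone functions are Borel, so $h$ is Borel.
\item \emph{Submultiplicativity on $\RR$.} For $x,y\in\RR$, monotonicity and \eqref{eq:ordinary-submult} give
\[
h(x+y)=g(|x+y|)\le g(|x|+|y|)\le K_g\,g(|x|)g(|y|)=K_g\,h(x)h(y).
\]
Sato's definition allows a multiplicative constant (or one may replace $h$ by $K_gh$, which changes no integrability condition), so $h$ is submultiplicative in his sense.
\end{itemize}

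With these checks, Sato's theorem yields the first equivalence, and combining the two equivalences proves the corollary.

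I expect no real obstacle; the argument is a direct combination of two results. The only points needing care are the two hypothesis checks for Sato's theorem: the submultiplicativity passing from $[0,\infty)$ to $\RR$ via $|x+y|\le|x|+|y|$ and monotonicity, and the constant $K_g$, which is absorbed by rescaling.
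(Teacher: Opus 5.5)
Your proposal is correct and follows the paper's proof: you check that $x\mapsto g(|x|)$ is measurable, locally bounded and submultiplicative via $g(|x+y|)\le g(|x|+|y|)\le K_g\,g(|x|)g(|y|)$, apply \cite[Theorem~25.3]{Sato2013}, and then use \Cref{thm:intro-Delta-equivalence} for $\Lambda(\mu)$, which has the same L\'evy measure. Your remark that Sato's definition already allows the multiplicative constant $K_g$ is also accurate.
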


Two practical sufficient conditions for \textup{($\Delta$)} are given in \Cref{prop:Delta-examples}.

Tail behavior has also been studied in several settings. The free gamma distributions and their tail asymptotics were analyzed in \cite{HT2014}. Free subexponentiality and regularly varying tails were studied in \cite{HM2013}; for free regular infinitely divisible laws, tail equivalence with the free L{\'e}vy measure under the assumption of regular variation was established in \cite{CCH2020}. The free L{\'e}vy measure of the normal distribution was analyzed in detail in \cite{HU2023}.

The proof of \Cref{thm:intro-law-to-levy} also yields the following quantitative comparison. This theorem complements these asymptotic results: it applies to arbitrary freely infinitely divisible laws, at the cost of comparison only up to fixed dilations and multiplicative constants.

\begin{theorem}\label{thm:intro-tail}
Let $\mu\in\ID^{\boxplus}(\RR)$ have free L{\'e}vy measure $\nu$ and free generating measure $\sigma$. There exist a universal constant $C\ge1$ and constants $B_\mu,R_\mu\ge1$ depending on $\mu$ such that, for every $r\ge R_\mu$,
\begin{align*}
\mu(\{|x|>B_\mu r\})\le\nu(\{|x|>r\})\le C\mu(\{|x|>r/2\}).
\end{align*}
Moreover, \eqref{eq:pair-triplet} gives $\frac12\nu(\{|x|>r\})\le\sigma(\{|x|>r\})\le\nu(\{|x|>r\})$ for $r\ge1$, so the same type of comparison holds between $\mu$ and $\sigma$.
\end{theorem}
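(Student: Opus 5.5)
The plan is to compare $\mu$ and $\nu$ through truncated second-moment functionals read off from the Cauchy and Voiculescu transforms on the imaginary axis, and then to pass from these functionals back to tails. For a finite measure $\lambda$ and $y>0$ put
\[
T_\lambda(y)=\int_\RR \frac{x^2}{x^2+y^2}\di{\lambda}(x).
\]
A direct computation gives $1-\operatorname{Re}\bigl(iyG_\mu(iy)\bigr)=T_\mu(y)$. Similarly, \eqref{eq:free-generating-pair} gives
\[
-\operatorname{Im}\phi_\mu(iy)=y\int_\RR \frac{1+x^2}{x^2+y^2}\di{\sigma}(x).
\]
By \eqref{eq:pair-triplet} this equals $y$ times $\sigma(\RR)$-type mass plus a quantity comparable to $T_\nu(y)+a/y^2+\int_{|x|\le1}x^2\di{\nu}/y^2$. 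The elementary inequalities
\[
\tfrac12\lambda(\{|x|>y\})\le T_\lambda(y)\le \lambda(\{|x|>y\})+y^{-2}\int_{|x|\le y}x^2\di{\lambda}
\]
will convert the transform estimates into tail estimates.

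For the upper bound $\nu(\{|x|>r\})\le C\mu(\{|x|>r/2\})$, I would use the subordination identity $F_\mu^{-1}(w)=w+\phi_\mu(w)$ on a truncated cone $\Gamma_{\alpha,\beta}$, where $F_\mu=1/G_\mu$. The point $w=F_\mu(iy)$ satisfies $w=iy+o(y)$, and $\phi_\mu(w)=iy-F_\mu(iy)$. Write $F_\mu(iy)=iy\bigl(1-(1-iyG_\mu(iy))\bigr)^{-1}\cdot(\ldots)$. Controlling the real part of $iyG_\mu(iy)$ by $T_\mu$ and its imaginary part by a first-moment-type term, one gets $|\phi_\mu(F_\mu(iy))|\lesssim y\,T_\mu(y)+\text{(bounded)}$. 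Moving from $F_\mu(iy)$ to $iy$ only changes the argument by a bounded dilation, which is where the factor $1/2$ enters. Feeding this into the formula for $-\operatorname{Im}\phi_\mu$ yields
\[
\nu(\{|x|>r\})\le 2T_\nu(r)\le C\,T_\mu(r/2)+\text{lower order}.
\]
The remaining issue is to absorb the small-$x$ part of $T_\mu$, namely $r^{-2}\int_{|x|\le r/2}x^2\di{\mu}$. For this I would use the opposite direction of the same identity, which controls $\operatorname{Re}$-parts by $\operatorname{Im}$-parts. If that fails, I would instead restrict to $r\ge R_\mu$ and exploit that the drift, Gaussian and small-jump contributions are $O(r^{-2})$ while the large-jump part is genuinely a tail. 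The constant $C$ should come out universal because the only $\mu$-dependence sits in how large $r$ must be for $F_\mu(iy)$ to lie in the cone where $F_\mu^{-1}$ is defined.

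For the lower bound $\mu(\{|x|>B_\mu r\})\le\nu(\{|x|>r\})$, I would split $\nu=\nu|_{[-r,r]}+\nu|_{\{|x|>r\}}$. Accordingly $\mu=\mu_r\boxplus\pi_r$, where $\mu_r$ has triplet $(\eta',a,\nu|_{[-r,r]})$ and $\pi_r$ is the free compound Poisson law with rate $\lambda_r=\nu(\{|x|>r\})$.
\begin{itemize}
\item Since the generating measure of $\mu_r$ lives on $[-r,r]$ with total mass bounded independently of $r\ge1$, the compact-support results of \cite{BV1992,HP2000} (made quantitative) place $\operatorname{supp}\mu_r$ inside $[-B'r,B'r]$ for $r\ge R_\mu$, with $B'$ depending on $\sigma(\RR)$, $|\gamma|$.
\item For $\pi_r$, when $\lambda_r<1$ it has an atom of mass $1-\lambda_r$ near the origin and remaining mass $\le\lambda_r$.
\end{itemize}
It then suffices to show that free convolution with a measure that is a $\lambda_r$-perturbation of a point mass, against a law supported in $[-B'r,B'r]$, puts mass at most $\lambda_r$ outside $[-B_\mu r,B_\mu r]$. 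I would try to get this from the atom formula for free convolution together with the Cauchy-transform bound $\mu(\{|x|>s\})\le 2T_\mu(s)$, with $T_\mu$ estimated via the additivity of $\phi$.

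This last step is the main obstacle. Free convolution does not give a naive union bound, and the constant in front of $\lambda_r$ must be exactly $1$ at the price of enlarging $B_\mu$. My first attempt would be to prove $T_\mu(B r)\le 2T_\nu(r)/B^{\epsilon}+O(r^{-2}B^{-2})$ via the subordination estimate of the previous paragraph run in reverse. I would then choose $B_\mu$ large enough to beat the constants.
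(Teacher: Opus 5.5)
Your proposal has genuine gaps in both directions, with a common source. Everything you extract from $G_\mu$ and $\phi_\mu$ on the imaginary axis is an integral against the Cauchy kernel, so the resulting comparisons carry additive errors of order at least $r^{-2}$. These include the term $r^{-2}\int_{|x|\le r/2}x^2\di{\mu}(x)$ inside $T_\mu(r/2)$ that you flag, the terms $a/r^2$ and $r^{-2}\int_{|x|\le r}x^2\di{\nu}(x)$ on the L\'evy side, the first-moment-type term $\int xy/(x^2+y^2)\di{\mu}(x)$ in $\operatorname{Im}(iyG_\mu(iy))$, and the ``bounded'' term in your estimate of $\phi_\mu(F_\mu(iy))$, which after division by $y$ becomes an $O(1/y)$ error in $T_\nu(y)$.

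The theorem is a pointwise multiplicative comparison with no additive error, and it has content precisely when tails are light or vanish. For compactly supported $\mu$ it forces $\nu(\{|x|>r\})=0$ for large $r$, yet $T_\mu(r/2)>0$ is not bounded by any multiple of $\mu(\{|x|>r/2\})=0$. For the semicircle law ($\nu=0$) it forces $\mu(\{|x|>B_\mu r\})=0$, yet any chain beginning with $\mu(\{|x|>Br\})\le 2T_\mu(Br)$ passes through a strictly positive quantity. Hence neither of your fallbacks (running the subordination identity in reverse, or noting that drift, Gaussian and small-jump contributions are $O(r^{-2})$) can yield the statement. Nor can the target $T_\mu(Br)\le 2T_\nu(r)/B^{\epsilon}+O(r^{-2}B^{-2})$. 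Your claims that the factor $1/2$ comes from moving $F_\mu(iy)$ to $iy$, and that $C$ comes out universal, are likewise unsupported.

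For the lower bound, your decomposition $\mu=\mu_r\boxplus\pi_r$ is exactly the paper's. The paper gets $\supp\mu_{r}\subseteq[-4K_\mu r,4K_\mu r]$ from the free cumulant bound $|\kappa_n|\le (K_\mu r)^n$ and the bound $4^n$ on the number of noncrossing partitions, and $\pi_r$ has an atom of mass $1-\nu(\{|x|>r\})$ exactly at $0$. However, your assertion that free convolution admits no naive union bound is false. For arbitrary affiliated operators in a tracial $W^*$-probability space one has \eqref{eq:tail-triangle}, $d_{X+Y}(r+s)\le d_X(r)+d_Y(s)$ (Fack--Kosaki). Concretely, let $e$ be the kernel projection of the compound Poisson summand $J_r$. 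Then $\tau(1-e)\le\nu(\{|x|>r\})$ and $\|(Y_r+J_r)e\|=\|Y_re\|\le 4K_\mu r$, so the variational characterization of generalized singular numbers gives $\mu(\{|x|>4K_\mu r\})\le\nu(\{|x|>r\})$ at once. The step you call the main obstacle is therefore immediate.

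For the upper bound, the missing idea is positivity rather than analysis of transforms:
\begin{enumerate}
\item Symmetrize to $\rho=\mu\boxplus\check\mu$.
\item Use the Arizmendi--Hasebe--Sakuma correspondence $\rho^2=\MP_1\boxtimes\theta$, with $\theta$ free regular and $\nu_\theta=(\nu_\rho)^2$.
\item Observe that the compound free Poisson law with jump measure $\nu_\theta|_{(s,\infty)}$ is stochastically dominated by $\theta$, because the complementary free summand is positive.
\item Apply twice the estimate $\pi_\lambda([r,\infty))\ge c\,\lambda([r,\infty))$, valid when $\lambda([r,\infty))\le1$ and proved with free Poisson random measures and the Marchenko--Pastur density: once to this compound part of $\theta$, and once to $\rho^2=\MP_1\boxtimes\theta$, which is the compound free Poisson law with jump measure $\theta$.
\end{enumerate}
This gives $\nu_\rho(\{|x|>s\})\le c_2\,\rho(\{|x|>s\})$ with universal $c_2$. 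The factor $1/2$ then comes from $\rho(\{|x|>2r\})\le 2\mu(\{|x|>r\})$, again by the union bound.
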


The integrability results above also have consequences for free convolution semigroups.

\begin{theorem}\label{thm:intro-levy-process}
Let $(X_t)_{t\ge0}$ be a selfadjoint free L{\'e}vy process with marginal laws $\mu_t=\mu^{\boxplus t}$ for $t\ge0$.
\begin{enumerate}[label=\textup{(\roman*)}]
\item If $g$ is increasing and freely submultiplicative, then $M_g(\mu^{\boxplus t_0})<\infty$ for some $t_0>0$ if and only if $M_g(\mu^{\boxplus t})<\infty$ for every $t>0$.
\item If $g$ is increasing and satisfies \eqref{eq:condition-Delta}, then $M_g(\mu^{\boxplus t_0})<\infty$ for some $t_0>0$ if and only if $M_g(\mu^{\boxplus t})<\infty$ for every $t>0$. In this case, for every $T>0$,
\[\lim_{R\to\infty}\sup_{0\le t\le T}\int_{|x|>R}g(|x|)\di{\mu^{\boxplus t}}(x)=0.\]
Moreover, $t\mapsto\int f\di{\mu_t}$ is continuous for every continuous $f$ satisfying $|f(x)|\le K(1+g(|x|))$ for some $K<\infty$.
\end{enumerate}
In particular, for every $p>0$, if the absolute $p$th moment is finite at some positive time, then it is finite at every positive time and depends continuously on time.
\end{theorem}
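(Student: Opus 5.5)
The plan is to reduce everything to the triplet of $\mu^{\boxplus t}$ and then apply \Cref{thm:intro-law-to-levy} and \Cref{thm:intro-Delta-equivalence}. If $\mu$ has free characteristic triplet $(\eta,a,\nu)$, then $C^{\boxplus}_{\mu^{\boxplus t}}=tC^{\boxplus}_\mu$. Hence $\mu^{\boxplus t}$ has triplet $(t\eta,ta,t\nu)$ and free L\'evy measure $t\nu$. The integrability condition $\int_{|x|>1}g(|x|)\,t\,\di{\nu}(x)<\infty$ does not depend on $t>0$.

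\textbf{Part (i).} Suppose $M_g(\mu^{\boxplus t_0})<\infty$. By \Cref{thm:intro-law-to-levy}, $\int_{|x|>1}g\,\di\nu<\infty$. The converse direction of \Cref{thm:intro-law-to-levy} is not available, so the free submultiplicativity \eqref{eq:free-submult} must carry the argument. For $t=nt_0$, iterating \eqref{eq:free-submult} gives $M_g(\mu^{\boxplus nt_0})\le K_g^{n-1}M_g(\mu^{\boxplus t_0})^n<\infty$.

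For arbitrary $t$, choose $n$ with $nt_0\ge t$ and write $\mu^{\boxplus nt_0}=\mu^{\boxplus t}\boxplus\mu^{\boxplus(nt_0-t)}$. It then remains to get an upper bound for $M_g(\mu^{\boxplus t})$, not a lower one. I would do this by a "free symmetrization" lower bound. Pick $R$ with $\mu^{\boxplus s}([-R,R])\ge1/2$, where $s=nt_0-t$. Monotonicity of $g$ and the submultiplicativity \eqref{eq:ordinary-submult} should then give $M_g(\rho\boxplus\lambda)\gtrsim M_g(\rho)/g(R)$ for $\lambda=\mu^{\boxplus s}$.

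That lower bound holds classically via independence. Freely it is not immediate, and I would instead use the subordination function: $\rho\boxplus\lambda$ has tails at least comparable to $\rho$ shifted by a bounded amount. The tail comparison of \Cref{thm:intro-tail} is the cleanest route. Applied to both $\mu^{\boxplus t}$ and $\mu^{\boxplus t_0}$, with L\'evy measures $t\nu$ and $t_0\nu$, it gives
\[
\mu^{\boxplus t}(|x|>B r)\le t\nu(|x|>r)\le Ct/t_0\,\mu^{\boxplus t_0}(|x|>r/2).
\]
Integrating against $\di g$ then bounds $M_g(\mu^{\boxplus t})$ by a multiple of $\int g(2B|x|)\,\di\mu^{\boxplus t_0}$. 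By \eqref{eq:ordinary-submult} and monotonicity we have $g(2Br)\le K g(r)^{m}$ for some $m$, which is still too weak in general.

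So the dilation $B$ is the \textbf{main obstacle}. To remove it, I would use that the constant $B_\mu$ in \Cref{thm:intro-tail} comes from the large-jump decomposition. Write $\mu^{\boxplus t}=\alpha_t\boxplus\beta_t$, where $\beta_t$ is free compound Poisson with L\'evy measure $t\nu|_{|x|>1}$ and $\alpha_t$ has compact support. Then, by \eqref{eq:free-submult} (for this one needs only $M_g(\alpha_t)<\infty$, which holds by compact support), it suffices to show $M_g(\beta_t)<\infty$. Since $\beta_t=\pi_t\boxplus\delta_c$ with $\pi_t$ free Poisson-type, I would obtain $M_g(\pi_t)<\infty$ from the Poisson series bound $M_g(\pi_t)\le\sum_k e^{-t\|\nu\|}\frac{(t\|\nu\|)^k}{k!}K_g^{k}M_g(\tilde\nu)^k$. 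This bound requires the free compound Poisson law to be a limit of $(\ldots)^{\boxplus N}$ and lower semicontinuity of $M_g$, and it is finite for all $t$.

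\textbf{Part (ii).} Here \Cref{thm:intro-Delta-equivalence} gives the equivalence directly through the $t$-independent condition on $\nu$. For uniform integrability over $t\in[0,T]$, I would use the same decomposition $\mu^{\boxplus t}=\alpha_t\boxplus\beta_t$ and the quantitative form of the estimates behind \Cref{thm:intro-Delta-equivalence}. These bound $\int_{|x|>R}g\,\di\mu^{\boxplus t}$ by quantities monotone in $t$ (the triplet scales linearly), which yields the uniform tail bound. Continuity of $t\mapsto\int f\,\di\mu_t$ follows from weak continuity of the semigroup plus this uniform integrability, by truncating $f$. The statement about $p$th moments is the case $g=1+r^p$, which is freely submultiplicative.
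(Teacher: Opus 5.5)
Your part (i) has a genuine gap, and it is the one the paper isolates in \Cref{rem:CD-gap}. After abandoning the tail comparison, you reduce to showing $M_g(\beta_t)<\infty$ for the compound free Poisson law $\beta_t=\pi_{\lambda_t}$, $\lambda_t=t\nu|_{\{|x|>1\}}$, from $M_g(\lambda_t)<\infty$. That is the converse of \Cref{thm:intro-law-to-levy} for a general increasing freely submultiplicative $g$. The paper proves it only under \eqref{eq:condition-Delta} and otherwise leaves it open.

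Your series sums to $e^{-q}\exp(K_gM_g(\lambda_t))$ with $q=\lambda_t(\RR)$. That is what one gets by approximating $\pi_{\lambda_t}$ by $\alpha_N^{\boxplus N}$, with $\alpha_N$ the classical compound Poisson law of intensity $\lambda_t/N$, and applying $g(|t_1+\cdots+t_m|)\le K_g^{m-1}\prod_j g(|t_j|)$ also at $m=0$. The true $m=0$ term is $g(0)$, not $K_g^{-1}$. The honest estimate is $M_g(\alpha_N^{\boxplus N})\le K_g^{N-1}M_g(\alpha_N)^N$, and since $M_g(\alpha_N)\ge g(0)$, the right-hand side is at least $K_g^{-1}(K_gg(0))^N$. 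By \eqref{eq:ordinary-submult} at $r=s=0$ one always has $K_gg(0)\ge1$, and typically $K_gg(0)>1$. For example, for $g(r)=1+r^2$ the inequality $g(2a)\le g(a)^2$ fails for $0<a<\sqrt2$, so $K_g>1=g(0)$. The bound then blows up as $N\to\infty$, and lower semicontinuity gives nothing. So part (i) is not established for $t$ that is not a multiple of $t_0$.

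The missing idea is a way to move between times without passing through $\nu$. The paper writes $\mu_t=(\mu_s)^{\boxplus t/s}$ for $s<t$ and applies \Cref{thm:intro-partial}. Its proof supplies exactly the ``summand dominated by the sum'' estimate you were looking for, in three steps:
\begin{enumerate}
\item Let $\lambda$ be the law of $|X|$. The compression model gives $-tp|X|p\le tpXp\le tp|X|p$, so $M_g(\mu^{\boxplus t})\le2M_g(\lambda^{\boxplus t})$ (\Cref{lem:absolute-compression}).
\item Since $\lambda$ lives on $[0,\infty)$, writing $\lambda^{\boxplus n}=\lambda^{\boxplus t}\boxplus\lambda^{\boxplus(n-t)}$ for an integer $n\ge t+1$ adds a positive free summand. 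Hence $\lambda^{\boxplus t}$ is stochastically dominated by $\lambda^{\boxplus n}$, and $M_g(\lambda^{\boxplus t})\le K_g^{n-1}M_g(\lambda)^n$ (\Cref{lem:positive-partial-moment}).
\item Going down in time uses $2X=(X+Y)+(X-Y)$ and the symmetry of $\mu\boxplus\check\mu$ to get $M_g(\mu)\le M_g(\mu^{\boxplus2})+2K_gM_g(\mu^{\boxplus2})^2$ (\Cref{prop:time-two}).
\end{enumerate}

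In part (ii) your equivalence step is the paper's. For uniform integrability, however, ``estimates monotone in $t$'' is not enough, since the bounds behind \Cref{thm:intro-Delta-equivalence} control $M_g$, not tails. The paper argues as follows:
\begin{enumerate}
\item It bounds $\sup_{t\le T}\|A_t\|$ for the part without large jumps via the cumulant bound.
\item It dominates the large-jump part two-sidedly by $Q_t$ with law $\pi_{t\lambda_0}$, where $\lambda_0$ is the image of $\nu|_{\{|x|>1\}}$ under $x\mapsto|x|$.
\item By positivity again, $\pi_{t\lambda_0}$ is stochastically dominated by $\pi_{T\lambda_0}$, and $M_g(\pi_{T\lambda_0})<\infty$ by \Cref{lem:positive-cp-moment}.
\item It absorbs the bounded shift using \eqref{eq:fixed-shift}.
\end{enumerate}
Finally, the continuity in the $p$th-moment claim comes from part (ii). You therefore need that $(1+r)^p$ satisfies \eqref{eq:condition-Delta} (it is doubling, \Cref{prop:Delta-examples}), not merely that it is freely submultiplicative.
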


Finally, part of the integrability phenomenon persists without infinite divisibility. For every probability measure $\mu$ on $\RR$, the fractional free convolution powers $\mu^{\boxplus t}$ are defined for $t\ge1$. For $s,t\ge1$, they satisfy $\mu^{\boxplus s}\boxplus\mu^{\boxplus t}=\mu^{\boxplus(s+t)}$ and $(\mu^{\boxplus s})^{\boxplus t}=\mu^{\boxplus st}$. See \cite{NS1996,H2015,ST2022} for related properties.

\begin{theorem}\label{thm:intro-partial}
Let $\mu$ be a probability measure on $\RR$, and let $g\colon[0,\infty)\to[1,\infty)$ be an increasing freely submultiplicative function. Then, for every $t\ge1$,
\begin{equation}\label{eq:intro-partial-moment}
M_g(\mu)<\infty\quad\Longleftrightarrow\quad M_g\left(\mu^{\boxplus t}\right)<\infty.
\end{equation}
Moreover, for every $T\ge1$ there exists $C_T\ge1$ such that, for every $1\le t\le T$ and $r>0$,
\begin{equation}\label{eq:intro-partial-tail}
C_T^{-1}\mu(\{|x|>C_Tr\})\le\mu^{\boxplus t}(\{|x|>r\})\le C_T\mu(\{|x|>r/C_T\}).
\end{equation}
If the conditions in \eqref{eq:intro-partial-moment} hold, then for every $T\ge1$,
$\lim_{R\to\infty}\sup_{1\le t\le T}\int_{|x|>R}g(|x|)\di{\mu^{\boxplus t}}(x)=0$.
\end{theorem}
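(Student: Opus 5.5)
The plan is to get the two-sided tail comparison \eqref{eq:intro-partial-tail} first and then derive the moment equivalence \eqref{eq:intro-partial-moment} and the uniform integrability from it. The tool is the subordination structure of fractional powers, as developed in \cite{NS1996,H2015}. By Belinschi--Bercovici, for $t\ge1$ there is an analytic self-map $\omega_t$ of $\CC^+$ with $G_{\mu^{\boxplus t}}(z)=G_\mu(\omega_t(z))$ and $\omega_t(z)=z/t+(1-1/t)F_{\mu^{\boxplus t}}(z)$. Equivalently, $\mu^{\boxplus t}=\mathbb{B}_{t-1}(\mu)^{\boxplus t}$ rescaled, where $\mathbb{B}_{t-1}$ is the Belinschi--Nica map. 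The measure $\mathbb{B}_{t-1}(\mu)$ is freely infinitely divisible, and its free generating measure is essentially $(t-1)$ times a dilation of $\mu$ itself (Belinschi--Nica). More precisely, if $\mu=D_{1/t}(\rho^{\boxplus t})$-type identities are used, one finds that $\mu^{\boxplus t}=D_t(\mu\boxplus\pi)$ for a freely infinitely divisible $\pi=\pi_t$ whose free generating measure is comparable to a dilate of $\mu$. Hence the problem reduces to two ingredients. The first is \Cref{thm:intro-tail} and \Cref{thm:intro-law-to-levy}, applied to laws whose generating measure is tied to $\mu$. The second is free submultiplicativity, which handles convolving with $\mu$.

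For the upper bound, I would write $\mu^{\boxplus t}$ as a dilation of $\mu^{\boxplus t}=\mu\boxplus\mu^{\boxplus(t-1)}$ when $t\ge2$, and use Nica--Speicher's identity for $1\le t\le 2$. A more uniform route is the Nica--Speicher operator model: $\mu^{\boxplus t}$ is the law of $t\,pap$ in the compressed algebra, where $a\sim\mu$ and $p$ is a projection of trace $1/t$ free from $a$. Since $\|pap\|$-type tail estimates follow from $|a|$, the compression gives $\mu^{\boxplus t}(\{|x|>r\})\le t\,\mu(\{|x|>r/t\})$. Indeed, the spectral projection of $|tpap|$ above $r$ has trace controlled by that of $|a|$ above $r/t$ via a Weyl-type inequality: $\operatorname{tr}\1_{(r,\infty)}(|pap|)\le \operatorname{tr}\1_{(r,\infty)}(|a|)$ when computed in the ambient state, and the normalization introduces a factor $t$. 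This gives the right inequality of \eqref{eq:intro-partial-tail} with $C_T=T$. It then implies $M_g(\mu)<\infty\Rightarrow M_g(\mu^{\boxplus t})<\infty$ through the layer-cake formula, with $g(Tr)\le K_g^{T}g(r)^{\lceil T\rceil}$-type control. Better, since $g$ is increasing, monotone rearrangement gives $M_g(\mu^{\boxplus t})\le T\,M_{g(T\cdot)}(\mu)$, and \eqref{eq:ordinary-submult} shows $g(Tr)\le K g(r)$ up to constants depending on $T$. I would check that \eqref{eq:ordinary-submult} indeed only gives $g(Tr)\le K^{m}g(r)^m$. So I instead plan to use free submultiplicativity directly: for integer $n\ge t$, compare $\mu^{\boxplus t}$ with $\mu^{\boxplus n}$ through the subordination formula.

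For the lower bound and the converse, I would use $\mu=(\mu^{\boxplus t})^{\boxplus 1/t}$. This is not allowed since $1/t<1$, so I instead use $\mu^{\boxplus t}\boxplus\mu^{\boxplus t}=\mu^{\boxplus 2t}$ together with the representation $\mu^{\boxplus 2t}=D_{2t}(\ldots)$. The cleanest path is the subordination identity $\mu^{\boxplus t}=\mu\boxplus\tau_t$ in the sense of Belinschi--Bercovici, where $\tau_t$ is the law with $F_{\tau}$ related to $\omega_t$. Tails of a free convolution of two measures dominate, up to constants, the tails of each factor shifted by a median. This is a free analogue of the Lévy--Ottaviani symmetrization inequality, and I would prove it from the quantile/operator estimate $\operatorname{tr}\1(|a+b|>r-m)\ge c\,\operatorname{tr}\1(|a|>r)$ for free $a,b$ with $b$ having median bounded by $m$. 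This yields the left inequality, and with it $M_g(\mu^{\boxplus t})<\infty\Rightarrow M_g(\mu)<\infty$. The uniformity in $1\le t\le T$ then gives uniform integrability by dominated tails: integrate $g$ against the right inequality and use the finiteness of $M_{g}$ for the dominating dilated $\mu$.

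I expect the main obstacle to be the lower tail bound, namely the free Lévy--Ottaviani inequality. I would attack it through the operator model with a projection onto the large spectral subspace of $a$ and Kaplansky-type trace inequalities for intersections of projections.
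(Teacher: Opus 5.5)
Your upper tail bound matches the paper's \eqref{eq:fractional-tail-upper}: compress by a projection of trace $1/t$ free from $X$ and apply the min--max inequality. The gap is the plan to derive \eqref{eq:intro-partial-moment} and the uniform integrability from \eqref{eq:intro-partial-tail}.

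Any such tail comparison must carry a dilation, since $\mu=\delta_1$ gives $\mu^{\boxplus t}=\delta_t$. For the freely submultiplicative $g(r)=e^{r}$, a dilated tail bound transfers nothing. If $\mu(\{|x|>r\})=e^{-2r}$, then $M_g(\mu)<\infty$, yet $e^{r}$ is not integrable against the tail $e^{-2r/C}$ when $C\ge2$.

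You noticed this for the forward direction. Your substitute, comparing $\mu^{\boxplus t}$ with $\mu^{\boxplus n}$ ``through subordination'', has no mechanism behind it: for signed $\mu$, $\mu^{\boxplus t}$ is not stochastically dominated by $\mu^{\boxplus n}$. The missing idea is to pass to the law $\lambda$ of $|X|$.
\begin{itemize}
\item With $p$ free from $X$ and $\tau(p)=1/t$, the operator inequality $-tp|X|p\le tpXp\le tp|X|p$ gives $M_g(\mu^{\boxplus t})\le 2M_g(\lambda^{\boxplus t})$.
\item For positive $\lambda$ and $n\ge t+1$, the identity $\lambda^{\boxplus n}=\lambda^{\boxplus t}\boxplus\lambda^{\boxplus(n-t)}$ gives a dilation-free stochastic domination of $\lambda^{\boxplus t}$ by $\lambda^{\boxplus n}$. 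The $g$-moment of $\lambda^{\boxplus n}$ is finite by \eqref{eq:free-submult}.
\item The same domination with a single $N\ge T+1$ gives the uniform integrability. Your ``integrate against the dilated $\mu$'' argument fails for the reason above.
\end{itemize}

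The converse is where the proposal actually breaks. Subordination $G_{\mu^{\boxplus t}}=G_\mu\circ\omega_t$ is not a free convolution identity, so it does not give a factorization $\mu^{\boxplus t}=\mu\boxplus\tau_t$ for $1<t<2$. Belinschi--Nica also relate $\mu^{\boxplus t}$ to a Boolean power, not a free one.

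More seriously, the free L{\'e}vy--Ottaviani inequality you rely on is false. Take free projections $P,Q$ with $\tau(P)=\epsilon$ small and $\tau(Q)=0.49$, and set $a=RP$ and $b=-\frac{R}{2}Q$. Then the median of $|b|$ is $0$. On the generic part of the pair $(P,Q)$ one has $\cos^2\theta\approx 0.49$, so the $2\times2$ blocks of $a+b$ have eigenvalues near $0.81R$ and $-0.31R$. The rest of the spectrum of $a+b$ is $\{0,-R/2\}$. Hence $\tau(\1_{(R-1,\infty)}(|a+b|))=0$ for large $R$, while $\tau(\1_{(R-1,\infty)}(|a|))=\epsilon$.

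So in the free setting a far atom can be pulled in by a fixed factor, not only by a bounded shift. Kaplansky-type bounds cannot detect this either, because $\tau(P\wedge Q)=\max\{0,\tau(P)+\tau(Q)-1\}$ for free projections. Even a dilated version of the inequality would give only the left half of \eqref{eq:intro-partial-tail}, not the moment converse.

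The paper avoids tails at this step.
\begin{itemize}
\item For free copies $X,Y$ with law $\mu$, the identity $2X=(X+Y)+(X-Y)$ gives the dilation-free bound $\mu(\{|x|>r\})\le\mu^{\boxplus2}(\{|x|>r\})+\rho(\{|x|>r\})$, where $\rho=\mu\boxplus\check\mu$.
\item The same identity applied to symmetric $\rho$ gives $\rho(\{|x|>r\})\le 2\rho^{\boxplus2}(\{|x|>r\})$.
\item Since $\rho^{\boxplus2}=\alpha\boxplus\check\alpha$ with $\alpha=\mu^{\boxplus2}$, \eqref{eq:free-submult} yields $M_g(\mu)\le M_g(\alpha)+2K_gM_g(\alpha)^2$.
\item One goes up from time $t$ to some time $2^k$ using the forward direction, then halves back down to time $1$.
\item The left tail bound comes from iterating $\mu(\{|x|>r\})\le5\mu^{\boxplus2}(\{|x|>r/2\})$ and combining it with the compression bound.
\end{itemize}
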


\subsection*{Outline}
\Cref{sec:preliminaries} recalls the tail estimates for spectral distributions and facts about freely infinitely divisible distributions used below. \Cref{sec:levy-data} proves the integrability and tail results. \Cref{sec:partial-semigroups} treats fractional free convolution powers and uniform integrability for free convolution semigroups.

\subsection*{Acknowledgments}
The author thanks Takahiro Hasebe for helpful discussions and comments. Generative AI tools assisted with exploratory discussions concerning \eqref{eq:condition-Delta} and with language editing. The author independently verified all arguments and is solely responsible for the contents of the paper.

\section{Preliminaries}\label{sec:preliminaries}
We recall several general results on generalized singular numbers of unbounded operators, together with some facts about symmetric freely infinitely divisible distributions that will be used in the proofs. Unless otherwise specified, all operators below are understood to be affiliated with a tracial $W^*$-probability space $(\MM,\tau)$.

We write \(\mu_X\) for the spectral distribution of a selfadjoint
affiliated operator \(X\), and \(\check\lambda\) for the
reflection of a measure \(\lambda\) under \(x\mapsto-x\). We denote
by \(\MP_q\) the Marchenko--Pastur law (or free Poisson law) with rate \(q\).

\subsection{Distribution functions and generalized singular numbers}
We use the framework of unbounded random variables affiliated with finite von Neumann algebras developed in \cite{FK1986,BV1993free}. Let $X$ be an operator affiliated with a tracial $W^*$-probability space $(\MM,\tau)$. Define
\[d_X(t)=\tau(\1_{(t,\infty)}(|X|)),\quad \snum{X}(u)=\inf\{t\ge0\colon d_X(t)\le u\},\quad 0\le u\le1.\]
We also have the variational characterization \cite[Definition~2.1 and Proposition~2.2]{FK1986}
\begin{equation}\label{eq:quantile-compression-formula}
\snum{X}(u)=\inf\{\|Xe\|\colon e\in\MM\text{ is a projection and }\tau(1-e)\le u\}.
\end{equation}
The characterization also implies that, for affiliated operators $X$ and $Y$,
\begin{equation}\label{eq:singular-number-sum}
\snum{X+Y}(u+v)\le\snum{X}(u)+\snum{Y}(v),\quad u,v\ge0,\quad u+v\le1;
\end{equation}
see \cite[Lemma~2.5]{FK1986}. Indeed, choose projections $e$ and $f$ such that $\|Xe\|$ and $\|Yf\|$ are arbitrarily close to the respective infima in \eqref{eq:quantile-compression-formula} and use $p=e\wedge f$, for which $\tau(1-p)\le u+v$.

Using the relation $\snum{Z}(u)\le r\Longleftrightarrow d_Z(r)\le u$, we obtain
\begin{equation}\label{eq:tail-triangle}
d_{X+Y}(r+s)\le d_X(r)+d_Y(s),\quad r,s>0.
\end{equation}
Indeed, the case $d_X(r)+d_Y(s)\ge1$ is immediate, while otherwise \eqref{eq:singular-number-sum} applied at $d_X(r)+\varepsilon$ and $d_Y(s)+\varepsilon$, followed by $\varepsilon\downarrow0$, gives the claim.

The Lebesgue measure of $\{u\in(0,1)\colon\snum{X}(u)>t\}$ is equal to $d_X(t)$ for every $t\ge0$. Thus $u\mapsto\snum{X}(u)$ on $(0,1)$ and $|X|$ with respect to $\tau$ have the same distribution. Consequently, for every nonnegative Borel function $h$,
\begin{equation}\label{eq:equimeasurable}
\tau(h(|X|))=\int_0^1h(\snum{X}(u))\di{u}.
\end{equation}

We shall repeatedly use the following monotonicity property. If $X$ and $Y$ are selfadjoint affiliated operators with $X\le Y$, then, by \cite[Corollary~3.3]{BV1993free},
\begin{equation}\label{eq:operator-comparison}
\mu_X((r,\infty))\le\mu_Y((r,\infty)),\quad r\in\RR.
\end{equation}
For probability measures \(\lambda_1\) and \(\lambda_2\) on \(\RR\), we say that \(\lambda_1\) is stochastically dominated by \(\lambda_2\) if
\begin{align*}
\lambda_1((r,\infty))\le\lambda_2((r,\infty)),\quad r\in\RR.
\end{align*}
Equivalently, one may formulate the definition using closed tails, with \((r,\infty)\) replaced by \([r,\infty)\).

Consequently, for every nonnegative increasing Borel function $h$,
\begin{equation}\label{eq:integral-g-inequality}
\int_\RR h(x)\di{\mu_X}(x)\le\int_\RR h(x)\di{\mu_Y}(x).
\end{equation}
More generally, if finite positive measures $\lambda_1$ and $\lambda_2$ satisfy $\lambda_1([r,\infty))\le \lambda_2([r,\infty))$ (or $\lambda_1((r,\infty))\le \lambda_2((r,\infty))$) for every $r\in\RR$, then
\begin{align}\label{eq:integral-general-g-inequality}
\int_\RR h(x)\di{\lambda_1}(x)\le \int_\RR h(x)\di{\lambda_2}(x)
\end{align}
for every nonnegative increasing Borel function $h$.

We also use the following inequality from \cite[Lemma~3.5]{BV1993free}. Let $X$ be a selfadjoint operator affiliated with a tracial $W^*$-probability space $(\MM,\tau)$, and let $p\in\MM $ be a nonzero projection free from $X$. Set $\alpha=\tau(p)$ and $\tau_p=\alpha^{-1}\tau|_{p\MM p}$, and regard $pXp$ as an affiliated operator in $(p\MM p,\tau_p)$. Then, for every $u>0$,
\begin{equation}\label{eq:compression-tail}
\alpha\tau_p(\1_{(u,\infty)}(|pXp|))\le \tau(\1_{(u,\infty)}(|X|)).
\end{equation}
In particular, when $\mu=\mu_X$ and $\tau(p)=1/t$ for $t\ge1$, the operator $tpXp$ has law $\mu^{\boxplus t}$ in $(p\MM p,\tau_p)$ (see \cite{NS1996,ST2022} for further details). Taking $u=r/t$ in \eqref{eq:compression-tail} gives
\begin{equation}\label{eq:fractional-tail-upper}
\mu^{\boxplus t}(\{|x|>r\})\le t\mu(\{|x|>r/t\}),\quad r>0.
\end{equation}

\subsection{Free regular laws}
\label{subsec:free-regular}

We first recall a characterization of free regular probability measures.

\begin{theorem}[{\cite[Theorem~4.2]{AHS2013}}]
\label{thm:free-regular-characterization}
Let $\theta$ be a probability measure on $\RR$. Then $\theta$ is free regular if and only if $\theta$ is freely infinitely divisible and $\operatorname{supp}(\theta^{\boxplus t})\subseteq[0,\infty)$ for every $t>0$. Equivalently, its free cumulant transform admits the representation
\begin{align*}
C_\theta^{\boxplus}(z)=\eta_\theta^{\prime}z+\int_{(0,\infty)}\frac{zx}{1-zx}\di{\nu_\theta}(x),\quad z\in\CC^-,
\end{align*}
where $\eta_\theta^{\prime}\geq0$, $\nu_\theta$ is supported on $(0,\infty)$, and $\int_{(0,\infty)}(1\wedge x)\di{\nu_\theta}(x)<\infty$.
\end{theorem}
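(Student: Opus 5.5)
The plan is to pass through the Bercovici--Pata bijection $\Lambda$ and the classical characterization of subordinators. Recall that $\theta$ is free regular precisely when it is the image under $\Lambda$ of the law of a classical subordinator. Classically, an infinitely divisible law $\rho$ with triplet $(\eta,a,\nu)$ satisfies $\supp(\rho^{*t})\subseteq[0,\infty)$ for all $t>0$ if and only if $a=0$, $\nu$ is concentrated on $(0,\infty)$, $\int(1\wedge x)\di{\nu}<\infty$, and the drift $\eta'=\eta-\int_{(0,1]}x\di{\nu}(x)$ is nonnegative (Sato, Theorem~24.11). It therefore suffices to show that ``all free powers live on $[0,\infty)$'' is equivalent to ``all classical powers of $\Lambda^{-1}(\theta)$ live on $[0,\infty)$''. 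The equivalence of the free cumulant representation with these conditions on the triplet is then a rewriting of \eqref{eq:free-triplet}. Indeed, if $a=0$ and $\int(1\wedge x)\di{\nu}<\infty$, then
\[\frac{1}{1-zx}-1-zx\1_{[-1,1]}(x)=\frac{zx}{1-zx}-zx\1_{(0,1]}(x),\]
so the integral of the last term can be absorbed into the drift to produce $\eta'_\theta$. Conversely, the stated representation is of the form \eqref{eq:free-triplet} with $a=0$ and $\eta=\eta'_\theta+\int_{(0,1]}x\di{\nu_\theta}(x)$.

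\emph{From the representation to positivity of supports.} Since $C^{\boxplus}_{\theta^{\boxplus t}}=tC^{\boxplus}_\theta$, the measure $\theta^{\boxplus t}$ has a representation of the same type, with data $(t\eta'_\theta,t\nu_\theta)$. It is thus enough to treat $t=1$. First truncate: let $\nu_\varepsilon=\nu_\theta|_{(\varepsilon,\infty)}$, which is a finite measure. The corresponding law is $\delta_{\eta'_\theta}\boxplus\pi_\varepsilon$, where $\pi_\varepsilon$ is a free compound Poisson law with positive jump distribution. This $\pi_\varepsilon$ is the weak limit of $\bigl((1-c/N)\delta_0+(c/N)\rho\bigr)^{\boxplus N}$ with $\rho$ supported in $(\varepsilon,\infty)$. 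Each such free convolution is the law of a sum of positive free operators, hence is supported in $[0,\infty)$. Supports in $[0,\infty)$ are preserved under weak limits. Letting $\varepsilon\downarrow0$, the cumulant transforms converge by dominated convergence, using $\int(1\wedge x)\di{\nu_\theta}<\infty$ and the bound $|zx/(1-zx)|\le C_z(1\wedge x)$ for $z\in\CC^-$. Hence the laws converge weakly, and the support stays in $[0,\infty)$.

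\emph{From positivity of supports to the representation.} Let $\theta\in\ID^{\boxplus}(\RR)$ with $\supp(\theta^{\boxplus t})\subseteq[0,\infty)$ for all $t>0$. Put $\mu_n=\theta^{\boxplus 1/n}$, so that $\mu_n^{\boxplus n}=\theta$. By \cite[Theorem~3.4]{BP1999}, $\mu_n^{*n}\to\Lambda^{-1}(\theta)$ weakly. Each $\mu_n^{*n}$ is supported in $[0,\infty)$, hence so is $\Lambda^{-1}(\theta)$. Applying the same argument to $\theta^{\boxplus t}=\Lambda(\Lambda^{-1}(\theta)^{*t})$, which uses that $\Lambda$ intertwines the convolution powers, shows that every classical power $\Lambda^{-1}(\theta)^{*t}$ is supported in $[0,\infty)$. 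Sato's theorem then gives the subordinator conditions on the common triplet, and the rewriting above yields the representation.

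The main obstacle is the approximation step in the first direction. One must ensure that weak convergence of the free compound Poisson approximants to $\theta$ follows from convergence of the cumulant transforms. For this I would invoke the continuity of $\Lambda$, or equivalently the convergence criterion for free characteristic triplets \cite{BNT2002}, rather than argue directly from the transforms.
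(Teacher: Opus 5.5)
The paper gives no proof of this statement. It imports it verbatim from \cite[Theorem~4.2]{AHS2013} and uses it as a black box, so there is no internal argument to compare with.

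Your proposal is correct as a self-contained derivation. Whichever of the three conditions one takes as the definition of free regularity, you establish that all three are equivalent:
\begin{itemize}
\item $\theta=\Lambda(\mu_*)$ for an infinitely divisible $\mu_*$ on $[0,\infty)$;
\item $\theta$ is freely infinitely divisible with all powers supported on $[0,\infty)$;
\item the cumulant representation holds.
\end{itemize}
The rewriting of the free triplet is right. The bound $|zx/(1-zx)|\le C_z(1\wedge x)$ holds because $|zx/(1-zx)|=x/|1/z-x|$ with $\Im(1/z)>0$. The passage from positive supports to the representation, via $\mu_n=\theta^{\boxplus 1/n}$ and \cite[Theorem~3.4]{BP1999}, is exactly the right use of the Bercovici--Pata limit theorem. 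In that direction you do not need the classical powers with $t\neq1$: an infinitely divisible law that is itself supported on $[0,\infty)$ already satisfies the subordinator conditions on its triplet.

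The forward direction can be streamlined so that the step you flag as the main obstacle disappears. Once the representation holds, $\mu_*=\Lambda^{-1}(\theta)$ is a subordinator law. Hence $\mu_*^{*1/N}$ is supported on $[0,\infty)$, and trivially $(\mu_*^{*1/N})^{*N}=\mu_*$. Then \cite[Theorem~3.4]{BP1999} gives $(\mu_*^{*1/N})^{\boxplus N}\to\theta$ weakly. This is a limit of free convolution powers of laws on $[0,\infty)$, so $\theta$ lives on $[0,\infty)$. Applying the same argument to $\mu_*^{*t}$ handles $\theta^{\boxplus t}$.

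This makes the two directions mirror images. It also avoids both the truncation $\varepsilon\downarrow0$ and the compound free Poisson approximation with unbounded jump distribution, which is itself an instance of the same limit theorem.

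Your truncation route is nevertheless valid. The free generating pairs of the truncated laws converge to that of $\theta$: the drift terms converge by dominated convergence, and the generating measures converge in total variation. So the convergence criterion in \cite{BNT2002}, or the continuity of $\Lambda$, gives the weak convergence you need.
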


We next recall the correspondence between symmetric freely infinitely divisible laws and free regular laws.

\begin{theorem}[{\cite[Theorem~6.1 and Proposition~6.2]{AHS2013}}]
\label{thm:symmetric-free-regular-correspondence}
Let $\rho$ be a symmetric probability measure on $\RR$. Then $\rho$ is freely infinitely divisible if and only if there exists a unique free regular probability measure $\theta$ such that
\begin{align}
\rho^2=\MP_1\boxtimes\theta.
\label{eq:ahs-relations}
\end{align}
Moreover, if the free characteristic triplet of $\rho$ is $(0,a_\rho,\nu_\rho)$, then $\eta_\theta^{\prime}=a_\rho$ and $\nu_\theta=(\nu_\rho)^2$. Here $\rho^2$ and $(\nu_\rho)^2$ denote the push-forwards of $\rho$ and $\nu_\rho$, respectively, under the map $x\mapsto x^2$.
\end{theorem}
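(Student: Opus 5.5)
Since this result is quoted from \cite{AHS2013}, the plan is to reconstruct it from the free L\'evy--Khintchine formula \eqref{eq:free-triplet} and the characterization in \Cref{thm:free-regular-characterization}. The idea is to route everything through the free cumulant transform and then pass to squares using one identity linking $\boxtimes$ with $\MP_1$ and even cumulants.

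\textbf{Cumulant identity for symmetric laws.} Let $\rho$ be symmetric and freely infinitely divisible with triplet $(\eta,a_\rho,\nu_\rho)$. Symmetry forces $\eta=0$ and $\nu_\rho=\check\nu_\rho$. Symmetrizing the integrand in \eqref{eq:free-triplet} (the odd terms cancel) gives
\[
C_\rho^{\boxplus}(z)=a_\rho z^2+\int_\RR\frac{z^2x^2}{1-z^2x^2}\di{\nu_\rho}(x)=a_\rho w+\int_{(0,\infty)}\frac{wy}{1-wy}\di{(\nu_\rho)^2}(y),\qquad w=z^2 .
\]
The L\'evy condition $\int\min\{1,x^2\}\di\nu_\rho<\infty$ is exactly $\int(1\wedge y)\di(\nu_\rho)^2(y)<\infty$. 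Hence, by \Cref{thm:free-regular-characterization}, the right-hand side is $C_\theta^{\boxplus}(w)$ for a free regular $\theta$ with $\eta'_\theta=a_\rho$ and $\nu_\theta=(\nu_\rho)^2$. This yields the key relation $C_\rho^{\boxplus}(z)=C_\theta^{\boxplus}(z^2)$.

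\textbf{From cumulants to squares.} Next I show that, for a symmetric probability measure $\rho$ and a probability measure $\theta$ on $[0,\infty)$, the relation $C_\rho^{\boxplus}(z)=C_\theta^{\boxplus}(z^2)$ is equivalent to $\rho^2=\MP_1\boxtimes\theta$.

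\emph{Compactly supported case.} Here this is the combinatorial fact (Nica--Speicher) that the free cumulants of $\rho^2$ are given by summing, over non-crossing partitions, the even cumulants of $\rho$. Equivalently, $\kappa_n(\rho^2)$ is the multiplicative convolution of the cumulant sequence $(\kappa_{2k}(\rho))_k$ with the all-ones sequence, which is the cumulant sequence of $\MP_1$. This is precisely the moment formula for $\MP_1\boxtimes\theta$ when $\kappa_k(\theta)=\kappa_{2k}(\rho)$.

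\emph{General case.} I would pass to the limit in one of two ways. The first is to approximate $\theta$ weakly by compactly supported free regular laws, obtained by truncating $\nu_\theta$. The corresponding symmetric $\rho_n$ (truncate $\nu_\rho$) converge weakly by continuity of the L\'evy--Khintchine parametrization. Weak continuity of $\boxtimes$ on $[0,\infty)$ and of $x\mapsto x^2$ then closes the argument. The second, more direct, way is to compare $S$-transforms analytically: $S_{\rho^2}(w)=S_\theta(w)/(1+w)$, with $S_{\MP_1}(w)=1/(1+w)$. I would attempt the approximation route first.

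\textbf{Both directions and uniqueness.} The forward implication now follows from the two steps above. For the converse, suppose $\rho^2=\MP_1\boxtimes\theta$ with $\theta$ free regular. Define $\tilde\rho$ to be the symmetric freely infinitely divisible law with triplet $(0,\eta'_\theta,\tilde\nu)$, where $\tilde\nu$ is the symmetric measure whose square is $\nu_\theta$. Then $\tilde\rho^2=\MP_1\boxtimes\theta=\rho^2$. A symmetric law is determined by its push-forward under $x\mapsto x^2$, so $\rho=\tilde\rho$ is freely infinitely divisible. Uniqueness of $\theta$ follows because $\boxtimes\MP_1$ is injective on measures on $[0,\infty)$: the $S$-transform of $\MP_1$ is nonvanishing, so $S_\theta$, and hence $\theta$, is recovered.

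The main obstacle is the general-case step for unbounded supports. It requires care with the weak continuity of $\boxtimes$ and with the domains on which the $S$-transform identities hold, particularly when $\theta$ has an atom at $0$.
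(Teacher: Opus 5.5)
The paper gives no proof of this statement; it quotes it from \cite{AHS2013}. As far as I recall, the source handles the link between $\rho^2=\MP_1\boxtimes\theta$ and $C_\rho^{\boxplus}(z)=C_\theta^{\boxplus}(z^2)$ analytically, through transforms of symmetric measures. That is essentially the $S$-transform route you list second, so I cannot compare against an in-paper argument.

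Your proposal is a correct reconstruction along a different line. The analytic route yields the equivalence for arbitrary symmetric $\rho$ and arbitrary $\theta$ on $[0,\infty)$ at once, so both directions of the theorem follow immediately. If $\theta$ is free regular, $C_\rho^{\boxplus}$ then visibly has free L\'evy--Khintchine form.

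You instead verify the identity for compactly supported pairs, using two facts:
\[
\kappa_n(\rho^2)=\sum_{\pi\in\mathrm{NC}(n)}\prod_{V\in\pi}\kappa_{2|V|}(\rho)\quad\text{for even elements,}\qquad
\kappa_n(\MP_1\boxtimes\theta)=m_n(\theta)=\sum_{\pi\in\mathrm{NC}(n)}\kappa_\pi(\theta).
\]
The second is the precise content of your ``moment formula'' sentence. You then truncate the L\'evy measures and pass to the limit. This avoids all domain and atom issues of $S$-transforms and uses only the weak continuity of the L\'evy--Khintchine parametrization and of $\boxtimes$ on $[0,\infty)$. You should check that the drifts of the truncated generating pairs still converge; this follows by dominated convergence.

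The price is a narrower Step 2. The limiting argument proves only $C_\rho^{\boxplus}(z)=C_\theta^{\boxplus}(z^2)\Rightarrow\rho^2=\MP_1\boxtimes\theta$, and only for freely infinitely divisible $\rho$ and free regular $\theta$. It does not give the general equivalence you announce at the start of Step 2. This is harmless, because your converse needs nothing more: it builds $\tilde\rho$ and uses that a symmetric law is determined by its square. But Step 2 should be stated in that restricted form.

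Also, the quoted characterization of free regular laws does not by itself produce $\theta$ from prescribed data $(\eta',\nu)$. Existence comes from the free L\'evy--Khintchine theorem, after rewriting in triplet form with drift $\eta'+\int_{(0,1]}y\,d\nu(y)$.

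Finally, uniqueness is simpler than via $S$-transforms. Since $\MP_1\boxtimes\theta=\pi_\theta$ and $C_{\pi_\theta}^{\boxplus}(w)=w^{-1}G_\theta(w^{-1})-1$, you recover $\theta$ from $\MP_1\boxtimes\theta$ through its Cauchy transform. No special care is then needed for an atom of $\theta$ at $0$.
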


We conclude this subsection by recalling compound free Poisson distributions and free Poisson random measures.

Let $\lambda$ be a finite positive measure on $\RR$. We denote by $\pi_\lambda$ the compound free Poisson distribution with jump measure $\lambda$, characterized by
\begin{align*}
C_{\pi_\lambda}^{\boxplus}(z)=\int_\RR \frac{zx}{1-zx}\di{\lambda}(x),
\qquad z\in\CC^-.
\end{align*}
If $\lambda$ is supported on $[0,\infty)$, then $\pi_\lambda$ is free regular and has free L{\'e}vy measure $\lambda|_{(0,\infty)}$. Moreover, if $\theta$ is a probability measure on $\RR$ and $q\ge0$, then $\pi_{q\theta}=\MP_q\boxtimes\theta$. In particular, $\pi_{q\delta_1}=\MP_q$ (see, e.g., \cite[Remark~4.1]{AHS2013}).

We next collect the properties of free Poisson random measures that will be needed below. More details and proofs can be found in \cite{BnT2005Levy}.

\begin{theorem}[{\cite{BnT2005Levy}}]
\label{thm:free-Poisson-random-measures}
Let $(S,\mathcal E,\lambda)$ be a $\sigma$-finite measure space and
set $\mathcal E_0=\{E\in\mathcal E:\lambda(E)<\infty\}$. There exist a tracial $W^*$-probability space $(\MM,\tau)$ and a mapping $M\colon\mathcal E_0\longrightarrow\MM_+$ with the following properties:
\begin{enumerate}
\item For every $E\in\mathcal E_0$, $\mu_{M(E)}=\MP_{\lambda(E)}$.
\item If $E_1,\ldots,E_m\in\mathcal E_0$ are pairwise disjoint, then $M(E_1),\ldots,M(E_m)$ are freely independent and $M\left(\bigcup_{j=1}^m E_j\right)=\sum_{j=1}^mM(E_j)$.
\item For every real-valued function $f\in L^1(S,\mathcal E,\lambda)$, there is a selfadjoint operator affiliated with $\MM$, denoted by $\int_S f\di{M}$, and the integral is linear in $f$. In particular, if $S=\RR$, $\mathcal E=\mathcal B(\RR)$, $\lambda$ is a finite measure, and $\int_\RR |x|\di{\lambda}(x)<\infty$, then $\mu_{\int_\RR x\,\di{M}(x)}=\pi_\lambda$.
\end{enumerate}
\end{theorem}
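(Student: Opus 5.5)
The statement is the existence theorem for free Poisson random measures from \cite{BnT2005Levy}. I would prove it by an inductive-limit construction over finite partitions, followed by an $L^1$-extension of the integral.

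\textbf{Step 1: the building blocks and the directed system.} Let $\mathcal P$ be the set of finite families $P=\{E_1,\dots,E_m\}$ of pairwise disjoint sets in $\mathcal E_0$. Order $\mathcal P$ by declaring $P\preceq Q$ if every set of $P$ is a union of sets of $Q$. Any two families have a common refinement obtained from all intersections and differences of their members, and these remain in $\mathcal E_0$, so $\mathcal P$ is directed. For each $P$, form the reduced tracial free product
\[\MM_P=\mathop{*}_{j} L^\infty(\RR_+,\MP_{\lambda(E_j)}),\]
with trace $\tau_P$, and let $M_P(E_j)$ be the coordinate function of the $j$th factor. If $P\preceq Q$ and $E=F_1\sqcup\dots\sqcup F_k$ with $F_i\in Q$, define $\iota_{PQ}$ by sending $M_P(E)$ to $\sum_i M_Q(F_i)$. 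This is a trace-preserving normal embedding for three reasons:
\begin{itemize}
\item the sum has law $\MP_{\lambda(F_1)}\boxplus\dots\boxplus\MP_{\lambda(F_k)}=\MP_{\lambda(E)}$, because the free cumulant transforms add;
\item sums over disjoint blocks of a free family are again free;
\item a trace-preserving embedding of each factor extends uniquely to the tracial free product.
\end{itemize}
Next check the compatibility $\iota_{QR}\iota_{PQ}=\iota_{PR}$. Take the inductive limit of the algebras $\MM_P$ and its trace, perform the GNS construction, and take the weak closure. This produces a tracial $W^*$-probability space $(\MM,\tau)$. Define $M(E)$ as the image of $M_P(E)$ for any $P$ containing $E$; by compatibility it does not depend on $P$. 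Properties 1 and 2 then hold: for pairwise disjoint $E_1,\dots,E_m$, pass to a family $P$ containing all of them together with $\bigcup_j E_j$ expressed through its refinement.

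\textbf{Step 2: the integral.} For a simple function $f=\sum c_j\1_{E_j}$ with disjoint $E_j\in\mathcal E_0$, set $\int f\di{M}=\sum c_jM(E_j)$. This is well defined and linear, again by additivity on a common refinement. For simple $f\ge0$ the operator is positive and
\[\tau\Bigl(\int f\di{M}\Bigr)=\sum_j c_j\lambda(E_j)=\int f\di{\lambda},\]
since $\MP_q$ has mean $q$. Writing $f-g=(f-g)^+-(f-g)^-$ then gives
\[\Bigl\|\int f\di{M}-\int g\di{M}\Bigr\|_{L^1(\tau)}\le\|f-g\|_{L^1(\lambda)}.\]
Hence the map extends by density to a linear contraction $L^1(\lambda)\to L^1(\MM,\tau)$. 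Its values are $\tau$-measurable selfadjoint operators, that is, operators affiliated with $\MM$.

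\textbf{Step 3: the law of $\int x\di{M}(x)$.} Now take $S=\RR$ with $\lambda$ finite and $\int|x|\di{\lambda}<\infty$. Choose simple $f_n\to x$ in $L^1(\lambda)$, for instance dyadic truncations, and let $\lambda_n=(f_n)_*\lambda$. Each $\int f_n\di{M}$ is a free sum of the dilations $c_jM(E_j)$, whose laws are compound free Poisson with jump measure $\lambda(E_j)\delta_{c_j}$. Hence its law is $\pi_{\lambda_n}$, because the cumulant transforms add. Convergence in $L^1(\tau)$ implies convergence in $\tau$-measure and therefore weak convergence of the spectral distributions. On the other side, $C^{\boxplus}_{\pi_{\lambda_n}}\to C^{\boxplus}_{\pi_\lambda}$ pointwise on $\CC^-$ by dominated convergence, since $\lambda_n\to\lambda$ weakly with uniformly bounded mass. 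This gives $\pi_{\lambda_n}\to\pi_\lambda$ weakly, so the law of $\int x\di{M}(x)$ is $\pi_\lambda$.

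\textbf{Main obstacle.} I expect the main difficulty to be Step 1. One must verify that the connecting maps $\iota_{PQ}$ really are compatible, trace-preserving normal embeddings of free products. One must also confirm that the inductive limit can be taken over finite families in $\mathcal E_0$ only. This is what makes $\sigma$-finiteness sufficient: infinite-measure sets never need an operator, and every set in $\mathcal E_0$ lies in some finite family.
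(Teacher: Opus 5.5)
The paper gives no proof of this statement. It is quoted from \cite{BnT2005Levy}, and later the paper uses the integral of \cite[Definition~4.4]{BnT2005Levy} and the law identification of \cite[Corollary~4.5]{BnT2005Levy}. So there is no in-paper argument to match, and your proposal should be read as a self-contained replacement. As such it is correct.

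\emph{Step 1.} The connecting maps $M_P(E)\mapsto\sum_{F\in Q,\,F\subseteq E}M_Q(F)$ are well defined. The decomposition of $E$ into members of $Q$ is unique up to empty sets, and these go to $0$ because $\MP_0=\delta_0$ and the trace is faithful. The maps are compatible, since each nonempty $G\in R$ with $G\subseteq E$ lies in exactly one $F\in Q$ with $F\subseteq E$. The free-product universal property with faithful traces makes them trace-preserving embeddings.

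\emph{Step 2.} The contraction $\|\int f\di{M}\|_{1}\le\|f\|_{L^1(\lambda)}$ gives a clean extension to $L^1(\lambda)$.

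\emph{What your route buys over the citation.}
\begin{itemize}
\item It never uses $\sigma$-finiteness, so it works for arbitrary measure spaces.
\item The $L^1(\tau)$-contraction directly gives properties the paper later relies on without stating them. These are $\int\1_E\di{M}=M(E)$, and $\int f\di{M}\ge0$ for $f\ge0$, since the positive cone of $L^1(\tau)$ is closed. Positivity is exactly what yields $X_R\ge rM([r,R])$ in \Cref{lem:compound-tail}.
\item The contraction also commutes with the $\di{u}$-integral in the layer-cake identity $\int_{(0,R]}(t\wedge x)\di{M}(t)=\int_0^xM((u,R])\di{u}$ used in \Cref{lem:positive-cp-moment}.
\end{itemize}
What the citation buys the paper is economy: it gets these objects and their distributional identification without any operator-algebraic construction.

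\emph{Points to tighten.}
\begin{itemize}
\item \textbf{Additivity.} A pairwise disjoint family cannot contain both the $E_j$ and $\bigcup_jE_j$. Use $\{\bigcup_jE_j\}\preceq\{E_1,\dots,E_m\}$ instead; the connecting map then sends $M(\bigcup_jE_j)$ to $\sum_jM(E_j)$.
\item \textbf{Step 3, continuity.} Pointwise convergence of $C^{\boxplus}_{\pi_{\lambda_n}}$ on $\CC^-$ does not by itself give weak convergence of $\pi_{\lambda_n}$. Invoke the Bercovici--Voiculescu continuity theorem \cite{BV1993free}, or convergence of free generating pairs.
\item \textbf{Step 3, pointwise convergence.} This follows more simply from $|zx/(1-zx)|\le|z|/|\operatorname{Im}z|$ for real $x$, together with dominated convergence on $(S,\lambda)$ along the dyadic truncations.
\item \textbf{Step 3, convergence of laws.} For the $L^1(\tau)$ convergence $A_n\to A$, the resolvent identity gives $|\tau((A_n-z)^{-1})-\tau((A-z)^{-1})|\le|\operatorname{Im}z|^{-2}\|A_n-A\|_1$. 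This is the one-line justification that the spectral laws converge.
\end{itemize}
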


\section{Free L{\'e}vy measures and their integrability}\label{sec:levy-data}

This section proves \Cref{thm:intro-law-to-levy,thm:intro-Delta-equivalence,cor:intro-bp,thm:intro-tail}. The main tool is symmetrization together with the properties recalled in \Cref{subsec:free-regular}.

\subsection{Integrability for freely submultiplicative functions}

We begin with a tail estimate for positive compound free Poisson distributions.

\begin{lemma}\label{lem:compound-tail}
Let $\lambda$ be a finite positive measure on $[0,\infty)$, and let $\pi_\lambda$ be the compound free Poisson distribution with jump measure $\lambda$. There is a universal constant $c>0$ such that
\begin{equation}\label{eq:compound-tail-lower}
\pi_\lambda([r,\infty))\ge c\lambda([r,\infty))
\end{equation}
whenever $r>0$ and $\lambda([r,\infty))\le1$. Consequently, for every increasing $h\colon[0,\infty)\to[0,\infty)$,
\[\int_{[0,\infty)}h(x)\di{\pi_\lambda}(x)<\infty\quad\Longrightarrow\quad\int_{(0,\infty)}h(x)\di{\lambda}(x)<\infty.\]
\end{lemma}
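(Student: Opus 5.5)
We realize $\pi_\lambda$ through a free Poisson random measure and compare operators via \eqref{eq:operator-comparison}. Fix $r>0$ with $q:=\lambda([r,\infty))\le1$; if $q=0$ there is nothing to prove. Let $M$ be the free Poisson random measure of \Cref{thm:free-Poisson-random-measures} on $S=[0,\infty)$ with intensity $\lambda$. Assume first $\int x\,\di\lambda<\infty$; the general case follows by truncation, replacing $\lambda$ by $\lambda_n=\lambda|_{[0,n]}$ and using $\pi_{\lambda_n}\to\pi_\lambda$ weakly. Since $\lambda_n([r,\infty))\uparrow q$, and since the lower bound passes to the closed set $[r,\infty)$ by the portmanteau theorem, the estimate persists in the limit. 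Write
\[X=\int x\,\di M(x)=\int x\1_{[0,r)}(x)\,\di M(x)+\int x\1_{[r,\infty)}(x)\,\di M(x)=:X_1+X_2 .\]
Both pieces are positive operators, being compound free Poisson with positive jumps and hence free regular. By freeness and additivity, $X\ge X_2$. Moreover $X_2\ge r\,M([r,\infty))$, because $x\1_{[r,\infty)}(x)\ge r\1_{[r,\infty)}(x)$ and the integral is order preserving. Order preservation is the point to verify: I would either cite positivity of $\int f\,\di M$ for $f\ge0$ from \cite{BnT2005Levy}, or argue directly through free cumulants. Indeed, $\pi_{\lambda_2}$ with $\lambda_2$ the push-forward of $\lambda|_{[r,\infty)}$ equals $\pi_{r\cdot(\ldots)}\boxplus\pi_{\text{positive}}$, since $x=r+(x-r)$. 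Such an additive splitting is not immediate at the level of laws, so using the operator realization is cleaner.

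By \eqref{eq:operator-comparison} and $\mu_{M([r,\infty))}=\MP_q$,
\[\pi_\lambda([r,\infty))\ge \pi_\lambda((r',\infty))\ge \MP_q((r'/r,\infty))\]
for any $r'<r$. Letting $r'\uparrow r$ gives $\pi_\lambda([r,\infty))\ge\MP_q([1,\infty))$. It therefore remains to show $\MP_q([1,\infty))\ge cq$ uniformly for $0<q\le1$; this is the main computational step. For $q<1$ the law $\MP_q$ has an atom of mass $1-q$ at $0$ and the density $\frac{\sqrt{(b-x)(x-a)}}{2\pi x}$ on $[a,b]$, where $a=(1-\sqrt q)^2$ and $b=(1+\sqrt q)^2$. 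Its absolutely continuous part has total mass $q$ and mean $1$, since the mean of $\MP_q$ is $q$, and it lives on $[a,b]\subset[0,4]$. Normalizing gives a probability measure $\kappa$ on $[0,4]$ with mean $1$, so $\kappa([1,4])\ge$? Mean $1$ alone does not force mass on $[1,4]$ beyond $0$, so I would use the explicit density. As $q\to0$ the normalized part concentrates near $1$ and is symmetric to first order in $\sqrt q$, giving $\kappa([1,\infty))\to1/2$. The function $q\mapsto\kappa([1,\infty))$ is continuous and positive on $(0,1]$, since $b>1$ and the density is positive on $(1,b)$. Compactness then yields a uniform $c>0$. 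The limit at $q\to0$ is the delicate point: after the substitution $x=1+\sqrt q\,y$, the normalized density converges to the semicircle on $[-2,2]$, which gives the value $1/2$.

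For the consequence, let $h$ be increasing with $\int h\,\di\pi_\lambda<\infty$. Choose $R$ with $\lambda([R,\infty))\le1$, which is possible since $\lambda$ is finite. For $r\ge R$ we then have $\pi_\lambda([r,\infty))\ge c\lambda([r,\infty))$. Applying \eqref{eq:integral-general-g-inequality} to the measures $c\lambda|_{[R,\infty)}$ and $\pi_\lambda$ restricted suitably, with $h\1_{[R,\infty)}$, which is increasing, gives $c\int_{[R,\infty)}h\,\di\lambda\le\int h\,\di\pi_\lambda<\infty$. On $(0,R)$ the integrand is bounded by $h(R)$ and $\lambda$ is finite, so $\int_{(0,\infty)}h\,\di\lambda<\infty$.
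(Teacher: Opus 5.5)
Your proposal is correct and follows the paper's proof. You realize $\pi_\lambda$ (after truncating $\lambda$) through a free Poisson random measure, use $X\ge r\,M([r,\cdot])$ and operator comparison to reduce to $\MP_q([1,\infty))\ge cq$, pass to the limit with the portmanteau theorem on the closed set $[r,\infty)$, and deduce the integrability statement by tail comparison. The one real difference is the Marchenko--Pastur bound: you get it from continuity in $q$, compactness, and the semicircle rescaling as $q\to0$, whereas the paper bounds the density below by $\sqrt{3q/2}/(4\pi)$ on $[1+\sqrt q/2,1+\sqrt q]$, which gives an explicit constant with less work.

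There is one small slip. For $r'<r$ the inequality $\pi_\lambda([r,\infty))\ge\pi_\lambda((r',\infty))$ is backwards. Your limit argument only needs $\pi_\lambda((r',\infty))\ge\MP_q([1,\infty))$ together with $\pi_\lambda((r',\infty))\downarrow\pi_\lambda([r,\infty))$ as $r'\uparrow r$. Alternatively, you can apply the closed-tail form of stochastic domination directly.
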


\begin{proof}
We use the free Poisson random measures constructed in \cite{BnT2005Levy}. Let $M$ be a free Poisson random measure with intensity $\lambda$. For $R>0$, put $\lambda_R=\lambda|_{[0,R]}$ and $X_R=\int_{[0,R]}x\di{M}(x)$. This integral is well defined in the sense of \cite[Definition~4.4]{BnT2005Levy}, and $X_R$ has distribution $\pi_{\lambda_R}$ by \cite[Corollary~4.5]{BnT2005Levy}. Moreover, $\pi_{\lambda_R}\Rightarrow\pi_\lambda$ as $R\to\infty$.

Fix $r>0$ with $\lambda([r,\infty))\le1$ and take $R>r$. Since $x\1_{(0,R]}(x)\ge r\1_{[r,R]}(x)$, we have $X_R\ge rM([r,R])$. The distribution of $M([r,R])$ is $\MP_{\lambda([r,R])}$. Note that there is a universal $c>0$ such that $\MP_q([1,\infty))\ge cq$ for $0\le q\le1$. Indeed, for $q>0$ the absolutely continuous part of $\MP_q$ has density
\[\frac{\sqrt{(b_q-x)(x-a_q)}}{2\pi x}\1_{[a_q,b_q]}(x),\quad a_q=(1-\sqrt q)^2,\quad b_q=(1+\sqrt q)^2;\] see \cite[Proposition~12.11]{NS2006}. On $[1+\sqrt q/2,1+\sqrt q]$, this density is bounded below by $\sqrt{3q/2}/(4\pi)$, and the interval has length $\sqrt q/2$. It follows that
\begin{equation}\label{eq:truncated-tail-bound}
\pi_{\lambda_R}([r,\infty))\ge\MP_{\lambda([r,R])}([1,\infty))\ge c\lambda([r,R]).
\end{equation}
The Portmanteau theorem and \eqref{eq:truncated-tail-bound} now give
\[\pi_\lambda([r,\infty))\ge\limsup_{R\to\infty}\pi_{\lambda_R}([r,\infty))\ge c\lambda([r,\infty)),\]
which proves \eqref{eq:compound-tail-lower}.

Choose $r_0>0$ so that $\lambda([r,\infty))\le1$ for $r\ge r_0$.
By the preceding tail comparison and \eqref{eq:integral-general-g-inequality}, we obtain
\begin{align*}
\int_{[r_0,\infty)}h(x)\di{\lambda}(x)\le c^{-1}\int_{[r_0,\infty)}h(x)\di{\pi_\lambda}(x)<\infty.
\end{align*}
Since $\lambda$ is finite and $h$ is increasing, $\int_{[0,r_0)}h(x)\di{\lambda}(x)\le h(r_0)\lambda([0,r_0))<\infty$. Hence $\int_{[0,\infty)}h(x)\di{\lambda}(x)<\infty$.
\end{proof}

\begin{lemma}\label{lem:regular-levy}
Let $\theta$ be a free regular probability measure on $[0,\infty)$, and let $\nu_\theta$ be its free L{\'e}vy measure. If $h\colon[0,\infty)\to[0,\infty)$ is increasing, then
\[\int_{[0,\infty)}h(x)\di{\theta}(x)<\infty\quad\Longrightarrow\quad\int_{[1,\infty)}h(x)\di{\nu_\theta}(x)<\infty.\]
\end{lemma}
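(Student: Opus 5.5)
We reduce to \Cref{lem:compound-tail} by splitting $\theta$ into a free convolution of a compound free Poisson law carrying the large jumps and a free regular remainder. By \Cref{thm:free-regular-characterization}, write
\[C_\theta^{\boxplus}(z)=\eta'_\theta z+\int_{(0,1)}\frac{zx}{1-zx}\di{\nu_\theta}(x)+\int_{[1,\infty)}\frac{zx}{1-zx}\di{\nu_\theta}(x).\]
Set $\lambda=\nu_\theta|_{[1,\infty)}$. This is a finite measure because $\int(1\wedge x)\di{\nu_\theta}<\infty$. Let $\theta_0$ be the law with cumulant transform given by the first two terms. Then $\theta_0$ is again free regular, by the same characterization, since $\eta'_\theta\ge0$ and $\nu_\theta|_{(0,1)}$ satisfies the integrability condition. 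Since free cumulant transforms add under $\boxplus$, we get $\theta=\theta_0\boxplus\pi_\lambda$, and both factors are supported on $[0,\infty)$.

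Next, realize $\theta_0$ and $\pi_\lambda$ as laws of freely independent positive affiliated operators $A$ and $B$ in some tracial $W^*$-probability space, for instance a free product. Then $A+B$ has law $\theta$ and $A+B\ge B$. By \eqref{eq:operator-comparison}, $\pi_\lambda((r,\infty))\le\theta((r,\infty))$ for all $r$, so $\pi_\lambda$ is stochastically dominated by $\theta$. Since $h$ is nonnegative and increasing (extended by $h(0)$ to the negative axis, which carries no mass), \eqref{eq:integral-g-inequality} gives
\[\int h\di{\pi_\lambda}\le\int h\di{\theta}<\infty.\]

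Finally, \Cref{lem:compound-tail} applied to $\lambda$, a finite positive measure on $[0,\infty)$, yields $\int_{(0,\infty)}h\di{\lambda}<\infty$, which equals $\int_{[1,\infty)}h\di{\nu_\theta}$.

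The only step that needs care is justifying the operator inequality $A+B\ge B$ for unbounded positive affiliated operators, together with the comparison \eqref{eq:operator-comparison} in that setting. This is exactly what \cite[Corollary~3.3]{BV1993free} provides, since $A\ge0$. If one wants to avoid unbounded operators altogether, a fallback is to truncate $\lambda$ to $[1,R]$ as in the proof of \Cref{lem:compound-tail}: the compound Poisson piece is then bounded, and one passes to the limit by the Portmanteau theorem.
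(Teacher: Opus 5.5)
Your proposal is correct and is essentially the paper's proof: split off the compound free Poisson factor with jump measure $\nu_\theta|_{[1,\infty)}$, realize $\theta$ as the law of $Y_1+Y_2$ with free $Y_1,Y_2\ge0$, use $0\le Y_2\le Y_1+Y_2$ together with \eqref{eq:integral-g-inequality}, and conclude with \Cref{lem:compound-tail}. Your extra checks (finiteness of $\lambda$ from $\int(1\wedge x)\di{\nu_\theta}(x)<\infty$, and free regularity of the remainder) are details the paper leaves implicit. The truncation fallback is unnecessary, since \eqref{eq:operator-comparison} already applies to affiliated operators.
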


\begin{proof}
Since $\theta$ is free regular, its free cumulant transform has the form
\begin{equation}\label{eq:free-regular-form}
C_\theta^{\boxplus}(z)=\eta_\theta^{\prime} z+\int_{(0,\infty)}\frac{zx}{1-zx}\di{\nu_\theta}(x),
\end{equation}
where $\eta_\theta^{\prime}\ge0$. The part $\eta_\theta^{\prime} z+\int_{(0,1)}\frac{zx}{1-zx}\di{\nu_\theta}(x)$ in \eqref{eq:free-regular-form} corresponds to a free regular measure, while $\int_{[1,\infty)}\frac{zx}{1-zx}\di{\nu_\theta}(x)$ is the free cumulant transform of the compound free Poisson law with jump measure $\nu_\theta|_{[1,\infty)}$. Hence, by additivity of $C^{\boxplus}$, we may realize a positive operator $Y$ with distribution $\theta$ as $Y=Y_1+Y_2$, where $Y_1,Y_2\ge0$ are freely independent and $Y_2$ has the compound free Poisson distribution with jump measure $\nu_\theta|_{[1,\infty)}$. Thus $0\le Y_2\le Y$, and \eqref{eq:integral-g-inequality} gives
\[\int_{[0,\infty)}h(x)\di{\mu_{Y_2}}(x)\le\int_{[0,\infty)}h(x)\di{\mu_Y}(x)=\int_{[0,\infty)}h(x)\di{\theta}(x)<\infty.\]
Applying \Cref{lem:compound-tail} to $Y_2$ proves the claim.
\end{proof}

\begin{proof}[Proof of \Cref{thm:intro-law-to-levy}]
Let $\check\mu$ and $\check\sigma$ be the reflections of $\mu$ and $\sigma$, respectively. Put $\rho=\mu\boxplus\check\mu$ and $\sigma_{\mathrm{s}}=\sigma+\check\sigma$. Then $\rho$ is symmetric and freely infinitely divisible, with free generating pair $(0,\sigma_{\mathrm{s}})$. Free submultiplicativity gives $M_g(\rho)\le K_gM_g(\mu)M_g(\check\mu)=K_gM_g(\mu)^2<\infty$. It is therefore enough to prove that $M_g(\rho)<\infty$ implies $M_g(\sigma_{\mathrm{s}})<\infty$.

Let $(0,a_\rho,\nu_\rho)$ be the free characteristic triplet of $\rho$, and let $\theta$ be the free regular law in \eqref{eq:ahs-relations}. Define $h(x)=g(\sqrt{x})$ for $x\ge0$. Since $\int h\di{\rho^2}=M_g(\rho)<\infty$ and $\rho^2=\MP_1\boxtimes\theta$ is the compound free Poisson distribution with jump measure $\theta$, \Cref{lem:compound-tail} gives $\int_{[0,\infty)}h\di{\theta}<\infty$.
By \Cref{lem:regular-levy} and $\nu_\theta=(\nu_\rho)^2$,
\[\int_{\{|x|\ge1\}}g(|x|)\di{\nu_\rho}(x)=\int_{[1,\infty)}h(x)\di{\nu_\theta}(x)<\infty.\]
Then \eqref{eq:pair-triplet} gives $M_g(\sigma_{\mathrm{s}})<\infty$. The equivalence with the integrability of the large-jump part of $\nu$ follows directly from \eqref{eq:pair-triplet}.
\end{proof}

\begin{remark}\label{rem:CD-gap}
In the proof of \cite[Theorem~V.6]{CD2005}, a compound free Poisson law is approximated by $\alpha_p^{\boxplus p}$, where $\alpha_p$ is a classical compound Poisson law with intensity $\lambda/p$. In estimating $M_g(\alpha_p)$, the inequality
\[
g(|t_1+\cdots+t_m|)\le K_g^{m-1}\prod_{j=1}^m g(|t_j|)
\]
is also applied when $m=0$. The corresponding term is, however, $g(0)$ rather than $K_g^{-1}$, so this would require $g(0)\le K_g^{-1}$. Since this is not part of the assumptions, the resulting bound on $M_g(\alpha_p^{\boxplus p})$ is not justified uniformly in $p$. We therefore do not know whether the equivalence holds for general increasing freely submultiplicative functions.
\end{remark}

The following closure properties provide many examples of freely submultiplicative functions.

\begin{proposition}\label{prop:fsm-examples}
The following functions and closure operations yield freely submultiplicative functions.
\begin{enumerate}[label=\textup{(\roman*)}]
\item The functions $r\mapsto1+r^p$, $r\mapsto e^{cr}$, and $r\mapsto\log(e+cr)$ are freely submultiplicative for every $p,c>0$.
\item If $g$ is freely submultiplicative and $c>0$, then $r\mapsto g(cr)$ is freely submultiplicative. Every $h$ satisfying $c_1g\le h\le c_2g$ for some $c_1,c_2>0$ is also freely submultiplicative. 
\item Every finite positive linear combination of freely submultiplicative functions is freely submultiplicative.
\end{enumerate}
\end{proposition}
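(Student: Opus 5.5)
The three items are handled in the order (ii), (iii), (i), since (i) is the only one that needs actual free probability. Throughout, I use that \eqref{eq:free-submult} only involves the pushforwards $|X|$ through $M_g$.

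\textbf{Item (ii).} Let $D_c$ denote dilation by $c$. Then $M_{g(c\,\cdot)}(\lambda)=M_g(D_c\lambda)$. Free convolution commutes with dilations, so $D_c(\mu_1\boxplus\mu_2)=D_c\mu_1\boxplus D_c\mu_2$, and \eqref{eq:free-submult} transfers with the same constant $K_g$. For $c_1g\le h\le c_2g$ we have
\[M_h(\mu_1\boxplus\mu_2)\le c_2M_g(\mu_1\boxplus\mu_2)\le c_2K_gM_g(\mu_1)M_g(\mu_2)\le c_2c_1^{-2}K_gM_h(\mu_1)M_h(\mu_2).\]
Hence $h$ is freely submultiplicative with $K_h=c_2c_1^{-2}K_g$.

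\textbf{Item (iii).} Let $h=\sum_j a_jg_j$ with $a_j>0$. Then
\[M_h(\mu_1\boxplus\mu_2)\le\sum_ja_jK_{g_j}M_{g_j}(\mu_1)M_{g_j}(\mu_2).\]
The key point is that each summand can be dominated by the corresponding product for $h$. Since $M_{g_j}(\mu_i)\le a_j^{-1}M_h(\mu_i)$, we have
\[a_jM_{g_j}(\mu_1)M_{g_j}(\mu_2)\le a_j^{-1}M_h(\mu_1)M_h(\mu_2).\]
Summing over $j$ gives the claim with $K_h=\sum_jK_{g_j}/a_j$. This step uses only $g_j\ge0$ and finiteness of the sum.

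\textbf{Item (i).} Here I would largely invoke \cite[Proposition~V.2]{CD2005}, which covers $1+r^p$, $e^r$ and $\log(e+r)$.

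For $e^{cr}$, apply (ii) to $e^r$ with dilation $c$.

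For $\log(e+cr)$, there are two cases.
\begin{itemize}
\item If $c\ge1$, then $\log(e+cr)\le\log c+\log(e+r)\le(1+\log c)\log(e+r)$.
\item If $c<1$, then $\log(e+cr)\ge\log(e+r)-\log(1/c)$, and since $\log(e+cr)\ge1$ this gives $\log(e+cr)\ge(1+\log(1/c))^{-1}\log(e+r)$.
\end{itemize}
In either case $\log(e+cr)$ is comparable to $\log(e+r)$, so (ii) applies.

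If one wants a self-contained proof of the base cases, I would use the operator framework of \Cref{sec:preliminaries}. Realize $X_1,X_2$ freely independent with laws $\mu_1,\mu_2$. By \eqref{eq:singular-number-sum} with $u=v=s/2$, we get $\snum{X_1+X_2}(s)\le\snum{X_1}(s/2)+\snum{X_2}(s/2)$. Combining this with \eqref{eq:equimeasurable} and \eqref{eq:ordinary-submult} gives
\[M_g(\mu_1\boxplus\mu_2)\le K_g\int_0^1g(\snum{X_1}(s/2))\,g(\snum{X_2}(s/2))\di{s}.\]

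The main obstacle is bounding this integral by $M_g(\mu_1)M_g(\mu_2)$. Cauchy--Schwarz only gives control in terms of $M_{g^2}$, which is not enough. Since $g$ is increasing, both factors are decreasing functions of $s$; for the exponential this yields $M_{e^{2r}}$ bounds, which fail. So freeness must genuinely be used, and for that step I would rely on \cite{CD2005} rather than reprove it.
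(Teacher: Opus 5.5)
Your proposal is correct and is essentially the paper's proof: (ii) via $D_c(\mu_1\boxplus\mu_2)=D_c\mu_1\boxplus D_c\mu_2$ together with the direct comparison estimate, (iii) by the same termwise bound with constant $\sum_j K_{g_j}/a_j$, and (i) from \cite[Proposition~V.2]{CD2005} combined with (ii). The only difference is that your case split for $\log(e+cr)$, while correct, is unnecessary: $r\mapsto\log(e+cr)$ is exactly the dilation of $\log(e+r)$ by $c$, so the first part of (ii) covers it, just as it covers $e^{cr}$.
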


\begin{proof}
Part \textup{(i)} follows from \cite[Proposition~V.2]{CD2005} and \textup{(ii)}. Part \textup{(ii)} follows from $D_c(\mu_1\boxplus\mu_2)=D_c\mu_1\boxplus D_c\mu_2$, where $D_c\mu$ is the push-forward of $\mu$ under $x\mapsto cx$. The assertion for comparable functions follows directly from \eqref{eq:free-submult}. Finally, if $g=\sum_{j=1}^m a_jg_j$ with $a_j>0$, then
\[M_g(\mu_1\boxplus\mu_2)\le\sum_{j=1}^m a_jK_{g_j}M_{g_j}(\mu_1)M_{g_j}(\mu_2)\le\left(\sum_{j=1}^m\frac{K_{g_j}}{a_j}\right)M_g(\mu_1)M_g(\mu_2).\]
\end{proof}

\subsection{Integrability under condition \texorpdfstring{\textup{($\Delta$)}}{(Delta)}}

We begin with the elementary consequences of \textup{($\Delta$)}. Since $D_g$ is increasing, iterating \eqref{eq:condition-Delta} gives
\begin{equation}\label{eq:Delta-multiple-shift}
g\bigl(r+aD_g(r)\bigr)\le K_\Delta^{\lceil a\rceil}g(r),
\qquad a\ge0,\quad r\ge0.
\end{equation}
For fixed $d\ge0$, choose an integer $m\ge1$ such that $mD_g(1)\ge d$. Then \eqref{eq:Delta-multiple-shift} gives $g(r+d)\le K_\Delta^m g(r)$ for $r\ge1$, while monotonicity and $g\ge1$ give $g(r+d)\le g(1+d)g(r)$ for $0\le r\le1$. Hence, for some $K_d<\infty$,
\begin{equation}\label{eq:fixed-shift}
g(r+d)\le K_dg(r),\qquad r\ge0.
\end{equation}

We first prove two auxiliary lemmas.

\begin{lemma}\label{lem:tail-integral}
Let $g\colon[0,\infty)\to[1,\infty)$ be increasing. Then, for every finite positive measure $\xi$ on $(0,\infty)$ and every $r\ge0$,
\[\int_0^r\sqrt{\xi((u,\infty))}\di{u}\le\sqrt{2\int_{(0,\infty)}g(t)\di{\xi}(t)}\,D_g(r).\]
\end{lemma}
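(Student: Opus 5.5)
This is a Cauchy--Schwarz argument with weight $u/g(u)$. For $0<u\le r$, write
\[
\sqrt{\xi((u,\infty))}=\sqrt{\frac{u}{g(u)}}\cdot\sqrt{\frac{g(u)}{u}\,\xi((u,\infty))}.
\]
Applying the Cauchy--Schwarz inequality on $(0,r)$ gives
\[
\int_0^r\sqrt{\xi((u,\infty))}\di{u}\le\Bigl(\int_0^r\frac{u}{g(u)}\di{u}\Bigr)^{1/2}\Bigl(\int_0^r\frac{g(u)}{u}\,\xi((u,\infty))\di{u}\Bigr)^{1/2}.
\]
The first factor is exactly $D_g(r)$. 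It therefore suffices to show
\[
\int_0^\infty\frac{g(u)}{u}\,\xi((u,\infty))\di{u}\le 2\int_{(0,\infty)}g(t)\di{\xi}(t).
\]

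For this we use Tonelli's theorem. We write $\xi((u,\infty))=\int\1_{\{t>u\}}\di{\xi}(t)$ and exchange the order of integration. The left side becomes $\int_{(0,\infty)}\bigl(\int_0^t g(u)/u\,\di{u}\bigr)\di{\xi}(t)$. The difficulty is that $\int_0^t g(u)/u\,\di{u}$ diverges at $0$ because $g\ge1$. So the naive weight fails, and the obstacle is to choose the splitting so that this inner integral stays finite.

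To fix this I would change the weight. Put $\sqrt{u/g(u)}$ in the first factor as before, but use $\sqrt{g(u)/u}\cdot\sqrt{u\,\xi((u,\infty))}/\sqrt{u}$ in the second. Equivalently, Cauchy--Schwarz is written as
\[
\sqrt{\xi((u,\infty))}=\sqrt{\frac{u}{g(u)}}\cdot\sqrt{\frac{g(u)\,\xi((u,\infty))}{u}},
\]
and the factor $1/u$ is the problem. Instead I would use the pairing
\[
\Bigl(\int_0^r\frac{u}{g(u)}\di{u}\Bigr)^{1/2}\Bigl(\int_0^r\frac{g(u)\,\xi((u,\infty))}{u}\di{u}\Bigr)^{1/2}
\]
only after reorganizing. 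The cleaner route is to bound $\xi((u,\infty))\le g(u)^{-1}\int_{(u,\infty)}g\di{\xi}\le g(u)^{-1}G$, where $G=\int g\di{\xi}$; this uses that $g$ is increasing.

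With that bound the integrand satisfies $\sqrt{\xi((u,\infty))}\le\sqrt{G/g(u)}$. Then we need
\[
\int_0^r g(u)^{-1/2}\di{u}\le\sqrt2\Bigl(\int_0^r\frac{u}{g(u)}\di{u}\Bigr)^{1/2}.
\]
This is again a Cauchy--Schwarz-type statement, so I would argue as follows. Set $F(r)=\int_0^r g^{-1/2}$ and $H(r)=\int_0^r u/g(u)\,\di{u}$. Both vanish at $0$. Compare derivatives of $F^2$ and $2H$: we have $(F^2)'=2F(r)g(r)^{-1/2}$ and $(2H)'=2r/g(r)$. Since $g$ is increasing, $F(r)=\int_0^r g(u)^{-1/2}\di{u}\ge r\,g(r)^{-1/2}$. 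That inequality goes the wrong way, so the derivative comparison fails, and this step is the main obstacle.

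To handle it I would return to the full tail rather than the crude bound. The estimate to aim for is a layer-cake (Hardy-type) inequality using $\xi((u,\infty))\le g(u)^{-1}\int_{(u,\infty)}g\di{\xi}$. The plan is to establish
\[
\Bigl(\int_0^r\sqrt{\xi((u,\infty))}\di{u}\Bigr)^2\le 2\int_0^r\int_0^v\sqrt{\xi((u,\infty))\,\xi((v,\infty))}\di{u}\di{v}.
\]
Then bound $\sqrt{\xi((u,\infty))\xi((v,\infty))}\le\xi((u,\infty))^{1/2}\,g(v)^{-1/2}\ldots$, choosing the bounds so that after Tonelli the $u$-integral produces $v/g(v)$ and the remaining factor produces $G$. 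I expect that choosing this symmetric splitting correctly is where the work lies, and the constant $2$ should come from the symmetrization $\int_0^r\int_0^r=2\int_0^r\int_0^v$.
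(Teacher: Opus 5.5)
Your proposal does not yet contain a proof. The first two reductions fail, as you note yourself, and the last plan stops at ``$\ldots$''. The symmetrization you display is in fact an identity, $\bigl(\int_0^r h\bigr)^2=2\int_0^r h(v)\int_0^v h(u)\,\mathrm{d}u\,\mathrm{d}v$ with $h(u)=\xi((u,\infty))^{1/2}$, so it gains nothing. All the content would have to come from the bounds you leave unspecified.

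The bounds you do indicate are pointwise in $u$, of the type $h(u)^2\le G/g(u)$ with $G=\int g\,\mathrm{d}\xi$. No argument that uses only this pointwise information together with monotonicity of $h$ can work. The decreasing function $h=\sqrt{G/g}$ satisfies both, yet for $g=1$ on $[0,1]$, $g=M>1$ on $(1,\infty)$ and $r=2$,
\[
\int_0^2\sqrt{G/g(u)}\,\mathrm{d}u=\sqrt G\,\bigl(1+M^{-1/2}\bigr)>\sqrt G\,\bigl(1+3/M\bigr)^{1/2}=\sqrt{2G}\,D_g(2).
\]
Replacing $G$ by $\int_{(u,\infty)}g\,\mathrm{d}\xi$ does not help unless you also use how this tail is built from $\xi$.

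That is exactly the missing ingredient. The constraint must be used in integrated form against the Lebesgue--Stieltjes measure $m_g=g(0)\delta_0+\mathrm{d}g$, so that $m_g([0,u])=g(u)$ for continuous $g$. By Tonelli this gives $\int_{[0,\infty)}h^2\,\mathrm{d}m_g\le\int g\,\mathrm{d}\xi$. All your Cauchy--Schwarz attempts are taken with respect to $\mathrm{d}u$. That is why they either diverge at $0$ or collapse to the crude bound.

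The paper's argument runs as follows. Since $h$ is decreasing, $h(u)\le g(u)^{-1}\int_{[0,u]}h\,\mathrm{d}m_g$. Integrate over $u\in(0,r)$ and exchange the order of integration \emph{before} applying Cauchy--Schwarz:
\[
\int_0^r h(u)\,\mathrm{d}u\le\int_{[0,r]}h(t)K_r(t)\,\mathrm{d}m_g(t),\qquad K_r(t)=\int_t^r\frac{\mathrm{d}u}{g(u)}.
\]
Cauchy--Schwarz in $L^2(m_g)$ then bounds this by $\bigl(\int_{[0,r]}h^2\,\mathrm{d}m_g\bigr)^{1/2}\bigl(\int_{[0,r]}K_r^2\,\mathrm{d}m_g\bigr)^{1/2}$. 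Integration by parts, using $K_r(r)=0$ and $K_r'=-1/g$, gives
\[
\int_{[0,r]}K_r^2\,\mathrm{d}m_g=2\int_0^rK_r(t)\,\mathrm{d}t=2\int_0^r\frac{u}{g(u)}\,\mathrm{d}u=2D_g(r)^2.
\]
This is where the constant $2$ comes from. The order of operations matters: applying Cauchy--Schwarz to the inner integral $\int_{[0,u]}h\,\mathrm{d}m_g$ instead would only return $h\le\sqrt{G/g}$. The paper proves the estimate for continuous $g$ and then handles general increasing $g$ through the continuous approximations $g_n(x)=n\int_{x-1/n}^x g(s)\,\mathrm{d}s$.
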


\begin{proof}
It suffices to consider $\int g\di{\xi}<\infty$. First suppose that $g$ is continuous, and let $m_g$ be the Stieltjes measure on $[0,\infty)$ given by $\di{m_g}=g(0)\delta_0+\di{g}$, so that $m_g([0,u])=g(u)$. Put $h(u)=\sqrt{\xi((u,\infty))}$ and $K_r(t)=\int_t^r g(u)^{-1}\di{u}$. Since $h$ is decreasing, $h(u)\le g(u)^{-1}\int_{[0,u]}h\di{m_g}$. The Fubini--Tonelli theorem and the Cauchy--Schwarz inequality therefore give
\[\left(\int_0^rh(u)\di{u}\right)^2\le\left(\int_{[0,r]}h^2\di{m_g}\right)\left(\int_{[0,r]}K_r^2\di{m_g}\right).\]
The first factor is at most $\int_{(0,\infty)} g\di{\xi}$. For the second factor, $K_r$ is absolutely continuous, with $K_r(r)=0$ and $K_r'(t)=-1/g(t)$. Since $m_g([0,t])=g(t)$, integration by parts and the Fubini--Tonelli theorem give
\begin{align*}
\int_{[0,r]}K_r(t)^2\di{m_g}(t)&=K_r(r)^2g(r)-\int_0^r g(t)(K_r^2)'(t)\di{t}\\
&=2\int_0^rK_r(t)\di{t}\\
&=2\int_0^r\int_t^r\frac{1}{g(u)}\di{u}\di{t}\\
&=2\int_0^r\frac{u}{g(u)}\di{u}
=2D_g(r)^2.
\end{align*}
This proves the claim for continuous $g$.

For general increasing $g$, extend it by $g(0)$ to $(-\infty,0)$ and set $g_n(x)=n\int_{x-1/n}^xg(s)\di{s}$. Then $g_n$ is continuous and increasing, $1\le g_n\le g$, and $g_n(x)\to g(x)$ at every continuity point of $g$, hence for Lebesgue-a.e. $x$. Since $\int g_n\di{\xi}\le\int g\di{\xi}$ and $D_{g_n}(r)\to D_g(r)$, applying the continuous case to $g_n$ and letting $n\to\infty$ proves the claim.
\end{proof}

We next record a quantile comparison used in the upper bound for compound free Poisson laws.

\begin{lemma}\label{lem:quantile-comparison}
Let $Z\ge0$, let $\xi$ be a finite measure on $(0,\infty)$ with mass $q\le1$, and set $W(x)=\xi((x,\infty))$. Suppose that $\tau(\1_{(0,\infty)}(Z))\le q$ and that a Borel function $T\colon[0,\infty)\to[0,\infty)$ satisfies $\snum{Z}(W(x))\le T(x)$ for $x\ge0$. Then, for every increasing $g\colon[0,\infty)\to[0,\infty)$ and every $d\ge0$,
\[\tau(g(d+Z))\le(1-q)g(d)+\int_{(0,\infty)}g(d+T(x))\di{\xi}(x).\]
\end{lemma}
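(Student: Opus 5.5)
The plan is to write the trace through the generalized singular numbers and then compare quantiles. By \eqref{eq:equimeasurable}, and since $d+Z\ge0$,
\[\tau(g(d+Z))=\int_0^1 g\bigl(d+\snum{Z}(u)\bigr)\di{u}.\]
The hypothesis $\tau(\1_{(0,\infty)}(Z))\le q$ means $d_Z(0)\le q$, so $\snum{Z}(u)=0$ for $u\ge q$. That part of the integral contributes at most $(1-q)g(d)$, and it remains to show
\[\int_0^q g\bigl(d+\snum{Z}(u)\bigr)\di{u}\le\int_{(0,\infty)}g(d+T(x))\di{\xi}(x).\]

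The key step is to represent Lebesgue measure on $(0,q)$ as a push-forward of $\xi$ under $W$, up to atoms. Let $F(x)=\xi((0,x])=q-W(x)$, and use the standard randomized quantile transform. If $x$ is drawn from $\xi/q$ and $v$ is independent and uniform on $(0,1)$, then
\[U=W(x)+v\,\xi(\{x\})\]
is uniform on $(0,q)$ after normalization. Equivalently, Lebesgue measure on $(0,q)$ is the image of $\xi\otimes\mathrm{Leb}_{(0,1)}$ under $(x,v)\mapsto W(x)+v\xi(\{x\})$. This gives
\[\int_0^q g\bigl(d+\snum{Z}(u)\bigr)\di{u}=\int_{(0,\infty)}\int_0^1 g\bigl(d+\snum{Z}(W(x)+v\xi(\{x\}))\bigr)\di{v}\di{\xi}(x).\]
Since $\snum{Z}$ is decreasing and $v\xi(\{x\})\ge0$, the integrand is at most $g(d+\snum{Z}(W(x)))$. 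Because $g$ is increasing, this is in turn at most $g(d+T(x))$, which gives the claim.

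If I want to avoid randomization, an alternative is to check the inequality on tails. For each $s\ge0$, compare the Lebesgue measure of $\{u<q\colon \snum{Z}(u)>s\}$ with $\xi(\{x\colon T(x)>s\})$. The first set is an interval $(0,u_s)$ with $u_s\le q$. For $x$ with $W(x)<u_s$ we get $\snum{Z}(W(x))>s$, and hence $T(x)>s$. The set $\{x\colon W(x)<u_s\}$ is an upper interval of $x$ whose $\xi$-mass is at least $u_s$, by right-continuity of $F$ and the handling of the boundary atom. The integral inequality then follows from \eqref{eq:integral-general-g-inequality} applied to the increasing function $s\mapsto g(d+s)$.

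I expect the main obstacle to be this measure-theoretic step: checking that the push-forward identity, or the tail inequality $\xi(\{W<u_s\})\ge u_s$, is exact when $\xi$ has atoms and $W$ is not continuous. In particular, the boundary point where $W$ jumps across $u_s$ needs care, since $W(x)<u_s$ is a strict inequality while $W$ is right-continuous. I would first try the randomized quantile formulation above, because there the identity is standard and the monotonicity of $\snum{Z}$ absorbs the atoms.
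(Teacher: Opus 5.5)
Your proof is correct, and it has the same skeleton as the paper's proof. You split $\int_0^1$ at $q$, couple Lebesgue measure on $(0,q)$ with $\xi$ so that the point fed to $\snum{Z}$ always dominates $W(x)$, and conclude from the monotonicity of $\snum{Z}$ and $g$ together with the hypothesis $\snum{Z}(W(x))\le T(x)$.

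The difference is in how the coupling is built.

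\textbf{The paper's coupling.} The paper takes an auxiliary $Y\ge0$ with law $(1-q)\delta_0+\xi$ and uses its deterministic quantile map $u\mapsto\snum{Y}(u)$. This runs in the opposite direction to yours, from $u$ to $x$.
\begin{itemize}
\item By \eqref{eq:equimeasurable}, the map pushes Lebesgue measure on $(0,q)$ onto $\xi$, because $\snum{Y}(u)>0$ exactly when $u<q$.
\item Right-continuity of $d_Y=W$ gives $W(\snum{Y}(u))\le u$. Hence $\snum{Z}(u)\le\snum{Z}(W(\snum{Y}(u)))\le T(\snum{Y}(u))$.
\end{itemize}
With this choice the atoms of $\xi$ you were worried about never need separate attention, and nothing beyond the preliminaries is used.

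\textbf{Your randomized coupling.} The transform $U=W(x)+v\,\xi(\{x\})$ imports the uniformity of the distributional transform. That is a standard fact, and it is exactly what absorbs the atoms in your version. In exchange, the key inequality $U\ge W(x)$ is immediate.

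\textbf{Your deterministic fallback.} This is also sound; it is essentially the stochastic-domination form of the paper's argument. One has $\{u<q\colon\snum{Z}(u)>s\}=(0,u_s)$ with $u_s=\min\{d_Z(s),q\}$. The upper interval $\{W<u_s\}$ has $\xi$-mass equal to $W$ or its left limit at the endpoint. This mass is at least $u_s$, because $W\ge u_s$ off that set.
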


\begin{proof}
The case $q=0$ is immediate. Let $Y\ge0$ have distribution $(1-q)\delta_0+\xi$. Then $d_Y(x)=W(x)$ for $x\ge0$. For $0<u<q$, note that $d_Y(\snum{Y}(u))\le u$. Since $u\mapsto\snum{Z}(u)$ is decreasing, we obtain
\[
\snum{Z}(u)\le\snum{Z}\bigl(d_Y(\snum{Y}(u))\bigr)\le T(\snum{Y}(u)).
\]
Moreover, $\snum{Z}(u)=0$ for $q\le u\le1$. Hence, using \eqref{eq:equimeasurable} first for $Z$ and then for $Y$,
\begin{align*}
\tau(g(d+Z))
&=\int_0^qg(d+\snum{Z}(u))\di{u}+(1-q)g(d)\\
&\le\int_0^qg(d+T(\snum{Y}(u)))\di{u}+(1-q)g(d)\\
&=(1-q)g(d)+\int_{(0,\infty)}g(d+T(x))\di{\xi}(x).
\end{align*}
\end{proof}

\begin{lemma}\label{lem:positive-free-sum}
Let $g$ be increasing and satisfy \eqref{eq:condition-Delta}, and let $A,B\ge0$ be freely independent with $\tau(g(A))+\tau(g(B))<\infty$. Then $\tau(g(A+B))<\infty$.
\end{lemma}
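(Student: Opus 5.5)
The plan is to reduce to the situation handled by \Cref{lem:quantile-comparison}: a bounded shift plus a positive operator $Z$ of small support. Then \Cref{lem:tail-integral} and \eqref{eq:condition-Delta} will absorb the extra spreading that freeness creates.

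First I truncate. Fix $d>0$ large, to be chosen later. Write $A=A_0+A_1$ with $A_0=A\1_{[0,d]}(A)$ and $A_1=A\1_{(d,\infty)}(A)$, and similarly $B=B_0+B_1$. Then $A_0,B_0\le d$. The pair $A_1,B_1$ is still free, because these operators are Borel functions of $A$ and $B$. Since $A+B\le 2d+A_1+B_1$, \eqref{eq:integral-g-inequality} together with the fixed-shift bound \eqref{eq:fixed-shift} give
\[
\tau(g(A+B))\le \tau\bigl(g(2d+A_1+B_1)\bigr).
\]
Set $\xi=\mu_{A_1}|_{(0,\infty)}+\mu_{B_1}|_{(0,\infty)}$. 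This is a measure of total mass $q\le\tau(\1_{(d,\infty)}(A))+\tau(\1_{(d,\infty)}(B))$, so $q\le1/2$ for $d$ large. It also satisfies $\int g\,\di{\xi}\le\tau(g(A))+\tau(g(B))<\infty$. The operator $Z=A_1+B_1$ is supported on a projection of trace at most $q$, since the support of $A_1+B_1$ lies in the join of the supports of $A_1$ and $B_1$. So \Cref{lem:quantile-comparison} applies with $W(x)=\xi((x,\infty))$.

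The central step is a quantile bound of the form
\[
\snum{Z}(W(x))\le T(x):=x+C\int_0^{x}\sqrt{W(u)}\,\di{u}+C',
\qquad x\ge0,
\]
with $C,C'$ absolute or depending only on $q$. The intuition is that, in a free sum of two small-support positive operators, a large eigenvalue $x$ of one summand can be pushed upward only by an amount controlled by the mass of the other summand lying above each level. This pushing has square-root size, as for the edge of a free Poisson or semicircle-type perturbation. I would try to prove it through the subordination functions $\omega_1,\omega_2$ for $A_1\boxplus B_1$, working on the real axis to the right of the supports. For $Q=A_1$ or $B_1$, the Cauchy transform $G_{\mu_Q}(y)$ at real $y>x$ is controlled by $(1-q)/y+\int(y-t)^{-1}\di{\mu_Q}(t)$. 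One then shows that the boundary of the support of $\mu_Z$ restricted to $(x',\infty)$ is displaced by at most $\int_0^x\sqrt{W}$. As an alternative, one could discretize in dyadic levels and use \eqref{eq:singular-number-sum} repeatedly, paying a $\sqrt{W}$ increment per level. Either way, I expect this free-probability edge estimate to be the main obstacle. In particular, getting the square root rather than a linear dependence on $W$ is exactly what makes $D_g$ rather than something worse appear.

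Granting the quantile bound, \Cref{lem:tail-integral} gives
\[
\int_0^x\sqrt{W}\le\sqrt{2\textstyle\int g\,\di\xi}\,D_g(x)=:aD_g(x).
\]
Hence $T(x)\le x+CaD_g(x)+C'$. \Cref{lem:quantile-comparison} then yields
\[
\tau(g(2d+Z))\le g(2d)+\int g\bigl(x+CaD_g(x)+2d+C'\bigr)\di\xi(x).
\]
Since $D_g$ is increasing and $d,C'$ are fixed, \eqref{eq:fixed-shift} and \eqref{eq:Delta-multiple-shift} bound the integrand by $K\,g(x)$ with $K$ finite. The right-hand side is therefore at most $g(2d)+K\int g\,\di\xi<\infty$, which proves $\tau(g(A+B))<\infty$.
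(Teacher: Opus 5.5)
Your outer reduction is the same as the paper's. You truncate so that the two tails have total mass at most one, dominate $A+B$ by a constant plus $Z=A_1+B_1$, and finish with \Cref{lem:quantile-comparison}, \Cref{lem:tail-integral} and \eqref{eq:Delta-multiple-shift}. Using $A\1_{(d,\infty)}(A)$ instead of $(A-d)_+$ is harmless.

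The gap is the central quantile estimate $\snum{Z}(W(x))\le x+C\int_0^x\sqrt{W(u)}\,\di{u}+C'$. This is the whole content of the lemma, you leave it unproved, and neither route you sketch supplies it. Freeness must enter essentially. Take $B=A$ (not free) and $g(r)=e^r$, which satisfies \eqref{eq:condition-Delta} by \Cref{prop:Delta-examples}. Then one can have $\tau(e^A)<\infty=\tau(e^{2A})$, so the estimate is false without freeness. Since \eqref{eq:singular-number-sum} holds for arbitrary pairs, iterating it over dyadic levels cannot by itself produce a $\sqrt{W}$ increment. The subordination sketch works ``on the real axis to the right of the supports''. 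But $A_1,B_1$ are typically unbounded, so no such region exists, and $\snum{Z}(W(x))$ is an interior quantile, not a support edge.

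The missing idea is to turn the quantile into the norm of a bounded operator, and then use freeness through a norm bound for free projections. Let $P_u=\1_{(u,\infty)}(A_1)$ and $Q_u=\1_{(u,\infty)}(B_1)$. These are free projections with $\tau(P_u)+\tau(Q_u)=W(u)\le1$. Write $A_1\wedge x$ for $\min\{A_1,x\}$ in the functional calculus, and set $e_x=1-(P_x\vee Q_x)$. Then $\tau(1-e_x)\le W(x)$ and $(A_1-x)_+e_x=(B_1-x)_+e_x=0$. Hence $Ze_x=(A_1\wedge x+B_1\wedge x)e_x$, and \eqref{eq:quantile-compression-formula} gives $\snum{Z}(W(x))\le\|A_1\wedge x+B_1\wedge x\|$. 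Now combine three facts:
the layer-cake identity $A_1\wedge x+B_1\wedge x=\int_0^x(P_u+Q_u)\,\di{u}$;
the bound $\|P_u+Q_u\|\le1+\sqrt{\tau(P_u)}+\sqrt{\tau(Q_u)}$ for free projections \cite[Theorem~3.3]{Aubrun2021};
and $\sqrt{a}+\sqrt{b}\le\sqrt{2(a+b)}$.
Together they give $\snum{Z}(W(x))\le x+\sqrt2\int_0^x\sqrt{W(u)}\,\di{u}$. This is your estimate with $C=\sqrt2$ and $C'=0$. Once it is in place, the rest of your argument goes through as written.
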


\begin{proof}
Choose $R>0$ such that $\tau(\1_{(R,\infty)}(A))+\tau(\1_{(R,\infty)}(B))\le1$, and set $C=(A-R)_+$ and $D=(B-R)_+$. Since $A+B\le C+D+2R$, \eqref{eq:operator-comparison} and \eqref{eq:fixed-shift} show that it suffices to prove $\tau(g(C+D))<\infty$.

For $u\ge0$, set $P_u=\1_{(u,\infty)}(C)$, $Q_u=\1_{(u,\infty)}(D)$, $W_C(u)=\tau(P_u)$, and $W_D(u)=\tau(Q_u)$. Let $\xi=\mu_C|_{(0,\infty)}+\mu_D|_{(0,\infty)}$, and put $W(x)=\xi((x,\infty))=W_C(x)+W_D(x)$, $q=\xi((0,\infty))$, and $L=\int g\di{\xi}$. Then $q\le1$ and $L<\infty$.

For a positive affiliated operator $H$ and $x\ge0$, we write $H\wedge x\coloneqq f_x(H)$, where $f_x(t)=\min\{t,x\}$. The projections $P_u$ and $Q_u$ are free, and \cite[Theorem~3.3]{Aubrun2021} implies $\|P_u+Q_u\|\le1+\sqrt{W_C(u)}+\sqrt{W_D(u)}$. Since $C\wedge x+D\wedge x=\int_0^x(P_u+Q_u)\di{u}$ and $\sqrt{W_C}+\sqrt{W_D}\le\sqrt{2W}$, \Cref{lem:tail-integral} gives
\begin{align*}
\|C\wedge x+D\wedge x\|\le x+\sqrt2\int_0^x\sqrt{W(u)}\di{u}\le x+2\sqrt L\,D_g(x).
\end{align*}

Let $e_x=1-(P_x\vee Q_x)$. Since $(C-x)_++(D-x)_+$ vanishes on $e_x$ and $\tau(1-e_x)\le W(x)$, \eqref{eq:quantile-compression-formula} gives $\snum{C+D}(W(x))\le \|(C+D)e_x\|\le \|C\wedge x+D\wedge x\|\le x+2\sqrt LD_g(x)$. Moreover, $\tau(\1_{(0,\infty)}(C+D))\le q$. Therefore, \Cref{lem:quantile-comparison} and \eqref{eq:Delta-multiple-shift} yield
\begin{align*}
\tau(g(C+D))
&\le(1-q)g(0)+\int_{(0,\infty)}g\bigl(x+2\sqrt L\,D_g(x)\bigr)\di{\xi}(x)\le g(0)+K_\Delta^{\lceil2\sqrt L\rceil}L<\infty.
\end{align*}
\end{proof}

\begin{corollary}\label{cor:free-sum-stability}
Let $g$ be increasing and satisfy \eqref{eq:condition-Delta}. If $X,Y$ are free selfadjoint operators and $\tau(g(|X|))+\tau(g(|Y|))<\infty$, then $\tau(g(|X+Y|))<\infty$.
\end{corollary}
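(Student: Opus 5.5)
The plan is to reduce to \Cref{lem:positive-free-sum} via the operator inequality $|X+Y|$ versus $X_\pm,Y_\pm$. Write $X=X_+-X_-$ and $Y=Y_+-Y_-$ with $X_\pm=(\pm X)_+$, $Y_\pm=(\pm Y)_+$. Then $X+Y\le X_++Y_+$ and $-(X+Y)\le X_-+Y_-$ as affiliated operators. The upper tail of $X+Y$ is controlled by $X_++Y_+$ through \eqref{eq:operator-comparison}:
\[
\mu_{X+Y}((r,\infty))\le\mu_{X_++Y_+}((r,\infty)),\qquad r\ge0.
\]
Applying the same inequality to $-(X+Y)\le X_-+Y_-$ gives
\[
\mu_{X+Y}((-\infty,-r))\le\mu_{X_-+Y_-}((r,\infty)),\qquad r\ge0.
\]
Hence, for every $r\ge0$,
\[
\tau(\1_{(r,\infty)}(|X+Y|))\le\tau(\1_{(r,\infty)}(X_++Y_+))+\tau(\1_{(r,\infty)}(X_-+Y_-)).
\]
Since $g$ is increasing and nonnegative, layer-cake integration of this tail inequality (as in \eqref{eq:integral-general-g-inequality}) yields
\[
\tau(g(|X+Y|))\le g(0)+\tau(g(X_++Y_+))+\tau(g(X_-+Y_-)).
\]

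It then remains to verify the hypotheses of \Cref{lem:positive-free-sum} for the pairs $(X_+,Y_+)$ and $(X_-,Y_-)$. Freeness is preserved because $X_\pm$ are functions of $X$ (affiliated with the von Neumann algebra generated by $X$), and $Y_\pm$ are functions of $Y$. Integrability holds because $0\le X_\pm\le|X|$ pointwise in the functional calculus, so $\tau(g(X_\pm))\le\tau(g(|X|))<\infty$; the same bound applies to $Y_\pm$. \Cref{lem:positive-free-sum} then gives finiteness of both $\tau(g(X_++Y_+))$ and $\tau(g(X_-+Y_-))$, which proves the claim.

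The only delicate point is the operator inequality $X+Y\le X_++Y_+$ for unbounded affiliated operators, together with the applicability of \eqref{eq:operator-comparison}. Here $X+Y$ means the closure of the sum, which is selfadjoint in the finite setting. The difference $X_++Y_+-(X+Y)=X_-+Y_-$ is a positive operator on the common core, so the comparison in \cite[Corollary~3.3]{BV1993free} applies in the same way as in the earlier proofs. A fallback that avoids this issue is to use \eqref{eq:tail-triangle} instead: $d_{X+Y}(2r)\le d_X(r)+d_Y(r)$. That bound alone, however, does not give integrability without a doubling condition on $g$, so the positivity route above is the intended one.
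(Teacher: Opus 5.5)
Your argument is correct and is essentially the paper's proof: both bound $\pm(X+Y)$ above by sums of free positive operators, compare tails via \eqref{eq:operator-comparison}, and invoke \Cref{lem:positive-free-sum}. The only difference is the choice of majorant. The paper uses the single majorant $|X|+|Y|$, from $-(|X|+|Y|)\le X+Y\le|X|+|Y|$, which gives $d_{X+Y}(r)\le2d_{|X|+|Y|}(r)$ and needs the lemma once; you split into $X_++Y_+$ and $X_-+Y_-$ and apply the lemma twice.
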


\begin{proof}
The operators $|X|$ and $|Y|$ are free, so \Cref{lem:positive-free-sum} gives $\tau(g(|X|+|Y|))<\infty$. Since
$-(|X|+|Y|)\le X+Y\le|X|+|Y|$, the positive and negative tails of $X+Y$ are each dominated by the positive tail of $|X|+|Y|$ by \eqref{eq:operator-comparison}. Hence
$d_{X+Y}(r)\le2d_{|X|+|Y|}(r)$ for $r>0$, and thus $\tau(g(|X+Y|))<\infty$.
\end{proof}

\begin{lemma}\label{lem:positive-cp-moment}
Let $g$ be increasing and satisfy \eqref{eq:condition-Delta}, let $\lambda$ be a finite positive measure on $(0,\infty)$, and let $\pi_\lambda$ be its compound free Poisson law. If $\int g\di{\lambda}<\infty$, then $M_g(\pi_\lambda)<\infty$.
\end{lemma}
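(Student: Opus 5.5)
The plan is to realize $\pi_\lambda$ through a free Poisson random measure and reduce to \Cref{lem:positive-free-sum} together with a direct estimate for a compound free Poisson law with small total mass. Let $M$ be a free Poisson random measure with intensity $\lambda$ (\Cref{thm:free-Poisson-random-measures}). Since $\lambda$ is finite and $g\ge1$, the hypothesis $\int g\di{\lambda}<\infty$ does not directly give $\int x\di{\lambda}<\infty$. To avoid this, I first truncate: for $R>0$, $\lambda_R=\lambda|_{(0,R]}$ has compact support, and $\pi_{\lambda_R}\Rightarrow\pi_\lambda$. Because $g$ is increasing and $\pi_{\lambda_R}$ is stochastically increasing in $R$ (realize $X_R=\int_{(0,R]}x\di M$, which is increasing in $R$ as an operator, and use \eqref{eq:operator-comparison}), it suffices to bound $M_g(\pi_{\lambda_R})$ uniformly in $R$. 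Lower semicontinuity (Portmanteau applied to the open tails, or monotone convergence of the tail functions) then passes the bound to $\pi_\lambda$.

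Next, split $\lambda_R$ into pieces of small mass. Since $\lambda$ is finite, write $\lambda=\sum_{j=1}^N\lambda^{(j)}$ with $N$ independent of $R$ and each $\lambda^{(j)}$ of mass at most $q_0$, for instance $\lambda^{(j)}=\lambda/N$ with $N\ge\lambda((0,\infty))/q_0$. Then $\pi_{\lambda_R}=\boxplus_j\pi_{\lambda^{(j)}_R}$, realized as a sum of free positive operators. By \Cref{lem:positive-free-sum}, iterated $N-1$ times, finiteness of $M_g(\pi_{\lambda_R})$ follows from finiteness for each piece. To get bounds uniform in $R$, I would instead track constants. The proof of \Cref{lem:positive-free-sum} gives an explicit bound depending only on $R$-choices, $L$, $g$, and $K_\Delta$. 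Alternatively, I would apply the piecewise bound directly to $\pi_\lambda$ once the small-mass case is proved without truncation, which is cleaner.

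The core step is the small-mass estimate: if $\xi$ has mass $q\le1/4$, say, and $\int g\di\xi<\infty$, then $M_g(\pi_\xi)<\infty$. Here I realize $Z=\int x\di M$ with $M$ of intensity $\xi$ and mimic \Cref{lem:positive-free-sum}. Write $Z\wedge x=\int_0^x \1_{(u,\infty)}$-type decompositions via $Z\le\int_0^\infty M((u,\infty))\di u$. More precisely, $Z=\int_0^\infty M((u,\infty))\di u$ formally, and $M((u,\infty))$ has law $\MP_{W(u)}$, whose norm is $(1+\sqrt{W(u)})^2\le 1+3\sqrt{W(u)}$. Then
\[
\Bigl\|\int_0^x M((u,\infty))\di u\Bigr\|\le x+3\int_0^x\sqrt{W(u)}\di u\le x+3\sqrt{2L}\,D_g(x)
\]
by \Cref{lem:tail-integral}. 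On the complement of the support projection of $M((x,\infty))$, whose trace is $W(x)$, the remaining part $\int_x^\infty M((u,\infty))\di u$ vanishes. This holds because $M((u,\infty))\le M((x,\infty))$ has support inside that projection, so by \eqref{eq:quantile-compression-formula}, $\snum{Z}(W(x))\le x+3\sqrt{2L}D_g(x)$. Also, $\tau(\1_{(0,\infty)}(Z))\le q$ since $Z$ is supported where $M((0,\infty))$ is, which has rank $q$ for $q\le1$. \Cref{lem:quantile-comparison} with $d=0$ and \eqref{eq:Delta-multiple-shift} then gives $M_g(\pi_\xi)\le g(0)+K_\Delta^{\lceil3\sqrt{2L}\rceil}L$.

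The main obstacle is making the layer-cake representation $Z=\int_0^\infty M((u,\infty))\di u$ rigorous for unbounded affiliated operators, together with the support comparison. I would handle it on the truncated measures $\xi|_{(0,R]}$, where everything is bounded and the integral is a norm-convergent Riemann integral consistent with \cite{BnT2005Levy}. The bounds above are uniform in $R$, so the limiting argument of the first paragraph finishes the proof. If the large pieces are needed, the finite free sum is handled by \Cref{lem:positive-free-sum}.
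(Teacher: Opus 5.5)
Your proposal is correct and is essentially the paper's proof: truncate to $\lambda|_{(0,R]}$, and split $X_R$ at level $x$ into two parts. The first is $\int_0^x M((u,R])\,du$, bounded in norm via the free Poisson norm bound and \Cref{lem:tail-integral}; the second vanishes off the support projection of $M((x,R])$. Then feed the resulting quantile bound into \Cref{lem:quantile-comparison} and \eqref{eq:Delta-multiple-shift}, pass to $R\to\infty$, and handle large total mass through $\pi_\lambda=\pi_{\lambda/m}^{\boxplus m}$ and free-sum stability. Your ``cleaner alternative'' is exactly the paper's ordering.

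The only step to tighten is the limit $R\to\infty$. Portmanteau, even combined with stochastic monotonicity in $R$, gives lower semicontinuity of $M_g$ only for lower semicontinuous $g$; for example, $\delta_{1-1/n}\Rightarrow\delta_1$ with $g=1+\1_{[1,\infty)}$ is a counterexample. So, as the paper does, apply it to the continuous function $\widetilde g(r)=\int_0^1 g(r+s)\,ds$, which satisfies $g\le\widetilde g\le K_1 g$ by \eqref{eq:fixed-shift}.
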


\begin{proof}
Put $q=\lambda((0,\infty))$ and $L=\int g\di{\lambda}$. First suppose that $q\le1$. Let $M$ be a free Poisson random measure with intensity $\lambda$. For $R>0$, set $\lambda_R=\lambda|_{(0,R]}$, $q_R=\lambda((0,R])$, and
$X_R=\int_{(0,R]}t\di{M}(t)$. Then $X_R$ has law $\pi_{\lambda_R}$. Put $W_R(u)=\lambda((u,R])$ and $M_{R,u}=M((u,R])$.

For $x\ge0$, the identities $t=(t\wedge x)+(t-x)_+$ and $t\wedge x=\int_0^x\1_{(u,\infty)}(t)\di{u}$ give
\begin{align*}
X_R=\int_0^xM_{R,u}\di{u}+\int_{(0,R]}(t-x)_+\di{M}(t)\eqqcolon Y_{R,x}+Z_{R,x}.
\end{align*}
Note that $\|M_{R,u}\|\le(1+\sqrt{W_R(u)})^2$; see \cite[Proposition~12.11]{NS2006}. Since $W_R\le1$ and $W_R(u)\le\lambda((u,\infty))$, \Cref{lem:tail-integral} yields
\begin{align*}
\|Y_{R,x}\|\le\int_0^x(1+\sqrt{W_R(u)})^2\di{u}\le x+3\int_0^x\sqrt{W_R(u)}\di{u}\le x+3\sqrt{2L}\,D_g(x).
\end{align*}

Moreover, $0\le Z_{R,x}\le RM_{R,x}$. Let $e_{R,x}=1-\1_{(0,\infty)}(M_{R,x})$. Then $Z_{R,x}e_{R,x}=0$. Since $M_{R,x}$ has free Poisson law of rate $W_R(x)\le1$, one has $\tau(1-e_{R,x})=W_R(x)$. Hence \eqref{eq:quantile-compression-formula} gives $\snum{X_R}(W_R(x))\le\|X_Re_{R,x}\|=\|Y_{R,x}e_{R,x}\|\le\|Y_{R,x}\|\le x+3\sqrt{2L}\,D_g(x)$. Similarly, $X_R\le RM((0,R])$, and hence $\tau(\1_{(0,\infty)}(X_R))\le q_R$.

Applying \Cref{lem:quantile-comparison} with $\xi=\lambda_R$ and then \eqref{eq:Delta-multiple-shift}, we obtain
\begin{align*}
M_g(\pi_{\lambda_R})\le(1-q_R)g(0)+\int_{(0,R]}g\bigl(x+3\sqrt{2L}\,D_g(x)\bigr)\di{\lambda}(x)\le g(0)+K_\Delta^{\lceil3\sqrt{2L}\rceil}L.
\end{align*}

Set $\widetilde g(r)\coloneqq\int_0^1g(r+s)\di{s}$. Then $\widetilde g$ is continuous, and \eqref{eq:fixed-shift} gives $g\le\widetilde g\le K_1g$. Therefore, the Portmanteau theorem and the preceding uniform estimate give
\begin{align*}
M_g(\pi_\lambda)
&\le M_{\widetilde g}(\pi_\lambda)\le\liminf_{R\to\infty}M_{\widetilde g}(\pi_{\lambda_R})\le K_1\sup_RM_g(\pi_{\lambda_R})<\infty.
\end{align*}

If $q>1$, take $m=\lceil q\rceil$. The preceding case applies to $\lambda/m$, and $\pi_\lambda=\pi_{\lambda/m}^{\boxplus m}$. The conclusion follows from \Cref{cor:free-sum-stability}.
\end{proof}
\begin{proposition}\label{prop:Delta-forward}
Let $g$ be increasing and satisfy \eqref{eq:condition-Delta}, and let $\mu\in\ID^{\boxplus}(\RR)$ have free generating measure $\sigma$ and free L{\'e}vy measure $\nu$. If $M_g(\sigma)<\infty$, then $M_g(\mu)<\infty$.
\end{proposition}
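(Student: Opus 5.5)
The plan is to split $\mu$ into a small-jump part, which has all moments, and large-jump parts that are compound free Poisson laws, and then combine them by \Cref{cor:free-sum-stability}. By \eqref{eq:pair-triplet}, $M_g(\sigma)<\infty$ is equivalent to $\int_{|x|>1}g(|x|)\di{\nu}(x)<\infty$, since on $\{|x|>1\}$ the density $x^2/(1+x^2)$ lies between $1/2$ and $1$.

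First decompose the free characteristic triplet as
\[
(\eta,a,\nu)=(\eta',a,\nu|_{[-1,1]})+(0,0,\nu|_{(1,\infty)})+(0,0,\nu|_{(-\infty,-1)}),
\]
with a suitable drift $\eta'$. The two large-jump pieces carry no compensation term in \eqref{eq:free-triplet}, because $\1_{[-1,1]}$ vanishes there. The finite measures $\lambda_+=\nu|_{(1,\infty)}$ and $\lambda_-=\nu|_{(-\infty,-1)}$ therefore produce cumulant transforms $\int\frac{zx}{1-zx}\di{\lambda_\pm}(x)$, so the corresponding pieces are exactly $\pi_{\lambda_+}$ and $\pi_{\lambda_-}$. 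Additivity of $C^{\boxplus}$ then gives
\[
\mu=\mu_0\boxplus\pi_{\lambda_+}\boxplus\pi_{\lambda_-}.
\]
Here $\mu_0$ is freely infinitely divisible with free L{\'e}vy measure supported in $[-1,1]$. Its free generating measure is therefore compactly supported, so by the compact-support result recalled in the introduction \cite{BV1992,HP2000}, $\mu_0$ has compact support. Hence $M_g(\mu_0)<\infty$, since $g$ is increasing and finite.

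Next, \Cref{lem:positive-cp-moment} applied to $\lambda_+$ on $(0,\infty)$ gives $M_g(\pi_{\lambda_+})<\infty$, because $\int g\di{\lambda_+}<\infty$. For $\lambda_-$, the reflection $\check\lambda_-$ is a finite measure on $(0,\infty)$ with $\int g\di{\check\lambda_-}<\infty$, and $\pi_{\lambda_-}$ is the reflection of $\pi_{\check\lambda_-}$. Since $M_g$ depends only on $|x|$, we get $M_g(\pi_{\lambda_-})=M_g(\pi_{\check\lambda_-})<\infty$. Realizing the three laws by free selfadjoint operators and applying \Cref{cor:free-sum-stability} twice yields $M_g(\mu)<\infty$.

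The main point needing care is the compactness of $\operatorname{supp}\mu_0$, which I take from the cited equivalence between compact support of the law and of its generating measure. If one prefers to avoid it, a fallback is to note that free L{\'e}vy measure supported in $[-1,1]$ means the generating measure has compact support, and then use the Bercovici--Voiculescu analytic description (the Voiculescu transform extends analytically off a compact set) to get compact support directly. Beyond that, the remaining work is routine bookkeeping of the drift in the splitting, which is harmless because drift only translates the law and \eqref{eq:fixed-shift} controls translations anyway.
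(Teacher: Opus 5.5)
Your proposal is correct and follows essentially the same route as the paper. The paper also splits $\nu$ into $\nu|_{(1,\infty)}$, $\nu|_{(-\infty,-1)}$ and the remaining part, uses the compact-support result for the small-jump law, applies \Cref{lem:positive-cp-moment} to the positive part and to the reflection of the negative part, and combines the pieces by repeated use of \Cref{cor:free-sum-stability}.
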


\begin{proof}
Split the free L{\'e}vy measure into its restrictions to $(1,\infty)$ and $(-\infty,-1)$ and the remaining part. The L{\'e}vy measure of the remaining part is supported in $[-1,1]$, so the corresponding law is compactly supported \cite{BV1992,HP2000,BG2006}. The two large-jump parts have finite $g$-moments by \eqref{eq:pair-triplet} and \Cref{lem:positive-cp-moment}, applied to the positive part and to the reflection of the negative part. Repeated applications of \Cref{cor:free-sum-stability} complete the proof.
\end{proof}

\begin{proof}[Proof of \Cref{thm:intro-Delta-equivalence}]
The equivalence between the L{\'e}vy measure and the generating measure follows from \eqref{eq:pair-triplet}. The implication from the generating measure to the law is \Cref{prop:Delta-forward}. The converse follows by the same symmetrization argument as in the proof of \Cref{thm:intro-law-to-levy}, with \Cref{cor:free-sum-stability} in place of free submultiplicativity. The final assertion is \Cref{cor:free-sum-stability}.
\end{proof}

\subsection{Sufficient conditions for \texorpdfstring{\textup{($\Delta$)}}{(Delta)}}

We record two convenient criteria for \textup{($\Delta$)}. We write $D_g(\infty)=\lim_{r\to\infty}D_g(r)$.

\begin{proposition}\label{prop:Delta-examples}
Let $g\colon[0,\infty)\to[1,\infty)$ be increasing. Each of the following conditions implies \eqref{eq:condition-Delta}:
\begin{enumerate}[label=\textup{(\roman*)}]
\item There exist $C\ge1$ and $r_0\ge0$ such that $g(2r)\le Cg(r)$ for $r\ge r_0$.
\item One has $D_g(\infty)<\infty$, and, for every $d>0$, there exists $C_d\ge1$ such that $g(r+d)\le C_dg(r)$ for $r\ge0$.
\end{enumerate}

Consequently, after modifying them on a bounded interval, the following functions satisfy \eqref{eq:condition-Delta}:
\[(1+r)^p(\log(e+r))^q\quad\text{and}\quad e^{cr^\alpha}(1+r)^p(\log(e+r))^q.\]
In the first family, either $p>0$ and $q\in\RR$, or $p=0$ and $q\ge0$. In the second family, $p,q\in\RR$, $c>0$, and $0<\alpha\le1$.
\end{proposition}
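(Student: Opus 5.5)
We have to show that (i) and (ii) each imply \eqref{eq:condition-Delta}, and then check the listed families against (i) or (ii). Throughout, $g$ is increasing and $g\ge1$, and we use two simple facts. First, for $r\ge1$ we have $D_g(r)^2=\int_0^r u/g(u)\di{u}\le r^2/2$, so $D_g(r)\le r$. Second, because $g\ge g(0)$ on $[0,r]$, $D_g(r)$ is bounded on compact sets.

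\emph{Case (i).} For $r\ge\max\{r_0,1\}$, the bound $D_g(r)\le r$ and monotonicity give $g(r+D_g(r))\le g(2r)\le Cg(r)$. For $0\le r\le\max\{r_0,1\}=:r_1$, we use $D_g(r)\le D_g(r_1)$ together with $g\ge1$ to get $g(r+D_g(r))\le g(r_1+D_g(r_1))\le g(r_1+D_g(r_1))\,g(r)$. So \eqref{eq:condition-Delta} holds with $K_\Delta=\max\{C,\,g(r_1+D_g(r_1))\}$.

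\emph{Case (ii).} Set $d=D_g(\infty)<\infty$. Then $D_g(r)\le d$ for all $r$, so $g(r+D_g(r))\le g(r+d)\le C_dg(r)$. If $d=0$ there is nothing to prove.

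\emph{First family.} Take $g(r)=(1+r)^p(\log(e+r))^q$. When $p>0$, or when $p=0$ and $q\ge0$, this is eventually increasing, and we redefine it on a bounded interval so that it is increasing and at least $1$; this is harmless as explained after \Cref{thm:intro-law-to-levy}. Such a modification changes $D_g$ only by a bounded amount for large $r$. The ratio $g(2r)/g(r)$ tends to $2^p$, so it is bounded for large $r$, and (i) applies.

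\emph{Second family.} Take $g(r)=e^{cr^\alpha}(1+r)^p(\log(e+r))^q$ with $0<\alpha\le1$. It is eventually increasing because the exponential factor dominates, and we again modify it on a bounded interval. We verify (ii).
\begin{itemize}
\item Finiteness of $D_g(\infty)$: the integrand $u/g(u)$ decays like $u^{1-p}(\log u)^{-q}e^{-cu^\alpha}$, which is integrable at infinity.
\item Shift bound: since $\alpha\le1$, the function $r^\alpha$ is concave, so $(r+d)^\alpha-r^\alpha\le d^\alpha$. The polynomial and logarithmic factors have bounded ratios under a shift by $d$ for $r\ge0$. Hence $g(r+d)\le C_dg(r)$ for large $r$.
\item Small $r$: on a compact range, $g\ge1$ and boundedness of $g$ give the shift bound trivially.
\end{itemize}

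The only point needing care is that the shift bound in (ii) and the bounded-interval modification must hold for all $r\ge0$, not just for large $r$. Both reduce to local boundedness of $g$ and $g\ge1$, exactly as in the derivation of \eqref{eq:fixed-shift}.
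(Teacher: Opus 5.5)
Your proof is correct and follows the paper's argument: the bound $D_g(r)\le r/\sqrt2$ (valid for all $r\ge0$, so your restriction to $r\ge1$ is unnecessary) gives $r+D_g(r)\le 2r$ for (i), and $D_g(r)\le D_g(\infty)$ gives (ii). The families are then checked via $g(2r)/g(r)\to2^p$ and $(r+d)^\alpha\le r^\alpha+d^\alpha$, as in the paper, and you are simply more explicit about handling the bounded range of $r$, which the paper summarizes as adjusting the constant on a bounded interval.
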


\begin{proof}
Since $g\ge1$, one has $D_g(r)\le r/\sqrt2$, and hence $r+D_g(r)\le2r$. This proves the result under \textup{(i)}, after adjusting the constant on a bounded interval. Under \textup{(ii)}, monotonicity gives $g(r+D_g(r))\le g(r+D_g(\infty))\le C_{D_g(\infty)}g(r)$.

The first family is eventually increasing and satisfies \textup{(i)} in the stated parameter range. For the second family, $D_g(\infty)<\infty$, and the inequality $(r+d)^\alpha\leq r^\alpha+d^\alpha$ for $0<\alpha\le1$ gives the estimate in \textup{(ii)}.
\end{proof}

\begin{remark}
\label{rem:SM-Delta-comparison} The second family above satisfies \eqref{eq:ordinary-submult} when $p,q\ge0$, since $(r+s)^\alpha\le r^\alpha+s^\alpha$, $1+r+s\le(1+r)(1+s)$, and $\log(e+r+s)\le \log(e+r)\log(e+s)$. Hence \Cref{cor:intro-bp} applies to these cases. Allowing negative polynomial exponents need not preserve \eqref{eq:ordinary-submult}. For example, for $\beta>0$, after modifying on a bounded interval to make it increasing, $g(r)=e^{cr}(1+r)^{-\beta}$ satisfies \eqref{eq:condition-Delta}, whereas $g(2r)/g(r)^2\asymp r^\beta$, and thus does not satisfy \eqref{eq:ordinary-submult}.

Conversely, suppose that $g$ is increasing and satisfies \textup{(SM)}. Then $g(r+d)\le K_gg(d)g(r)$ for every $d\ge0$, so \Cref{prop:Delta-examples}\textup{(ii)} shows that $D_g(\infty)<\infty$ implies \textup{($\Delta$)}. Moreover, since $r\mapsto\log(K_gg(r))$ is subadditive, the limit $\beta_g\coloneqq\lim_{r\to\infty}r^{-1}\log g(r)$ exists, and $\beta_g>0$ implies $D_g(\infty)<\infty$. Thus the only possible remaining case is $\beta_g=0$ and $D_g(\infty)=\infty$. Many standard examples in this regime, including $1+r^p$ for $0<p\le2$ and $\log(e+r)$, still satisfy \textup{($\Delta$)} by \Cref{prop:Delta-examples}. We do not know whether every increasing freely submultiplicative function satisfies \eqref{eq:condition-Delta}.
\end{remark}

\begin{proof}[Proof of \Cref{cor:intro-bp}]

Since $g$ is increasing and satisfies \textup{(SM)}, the function
$x\mapsto g(|x|)$ is measurable, locally bounded, and satisfies
\[
g(|x+y|)\le g(|x|+|y|)\le K_g g(|x|)g(|y|).
\]
Hence, by \cite[Theorem~25.3]{Sato2013}, $M_g(\mu)<\infty$ is equivalent to $\int_{|x|>1}g(|x|)\di{\nu}(x)<\infty$, where $\nu$ is the classical L{\'e}vy measure of $\mu$. The image of the Bercovici--Pata bijection has the same L{\'e}vy measure, so \Cref{thm:intro-Delta-equivalence} completes the proof.
\end{proof}

\subsection{Tail comparison between the law and the free L{\'e}vy measure}

We now use the same reduction to compare the tails of a freely infinitely divisible law and its free L{\'e}vy measure.

\begin{proposition}\label{prop:upper-levy-tail}
There is a universal constant $C>0$ such that, for every $\mu\in\ID^{\boxplus}(\RR)$ with free L{\'e}vy measure $\nu$,
\[\nu(\{|x|>2r\})\le C\mu(\{|x|>r\})\]
whenever $r>0$ and $2\nu(\{|x|>2r\})\le1$.
\end{proposition}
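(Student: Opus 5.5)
We quantify the symmetrization argument from the proof of \Cref{thm:intro-law-to-levy}, replacing integrability by tail estimates at each step.

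\textbf{Step 1: symmetrization.} Let $\rho=\mu\boxplus\check\mu$, realized as $X-X'$ with $X,X'$ free copies of law $\mu$. Its free Lévy measure is $\nu_\rho=\nu+\check\nu$, so
\[
\nu_\rho(\{|x|>2r\})=2\nu(\{|x|>2r\}).
\]
By \eqref{eq:tail-triangle},
\[
\rho(\{|x|>2r\})\le d_X(r)+d_{X'}(r)=2\mu(\{|x|>r\}).
\]
It therefore suffices to prove that $\nu_\rho(\{|x|>s\})\le C'\rho(\{|x|>s\})$ whenever $\rho$ is symmetric and $\nu_\rho(\{|x|>s\})\le1$, with $C'$ universal. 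Taking $s=2r$ then gives the claim with $C=2C'$, and the hypothesis $2\nu(\{|x|>2r\})\le1$ is exactly $\nu_\rho(\{|x|>2r\})\le1$.

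\textbf{Step 2: pass to the free regular law.} Use \Cref{thm:symmetric-free-regular-correspondence} to get $\rho^2=\MP_1\boxtimes\theta=\pi_\theta$ and $\nu_\theta=(\nu_\rho)^2$. Put $t=s^2$. Since $\theta$ is a probability measure, $\theta([t,\infty))\le1$. Applying \eqref{eq:compound-tail-lower} (in its open-tail form, obtained by letting $t'\downarrow t$) gives
\[
\rho(\{|x|>s\})=\pi_\theta((t,\infty))\ge c\,\theta((t,\infty)).
\]

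\textbf{Step 3: compare $\theta$ with $\nu_\theta$.} As in the proof of \Cref{lem:regular-levy}, write $Y=Y_1+Y_2$ with $Y_1,Y_2\ge0$ free and $Y_2$ of law $\pi_{\nu_\theta|_{(t,\infty)}}$. This requires splitting $\nu_\theta$ at $t$ rather than at $1$. That is legitimate because $\eta'_\theta z+\int_{(0,t]}\frac{zx}{1-zx}\di{\nu_\theta}(x)$ is still the free cumulant transform of a free regular law, since the Lévy measure restricted to $(0,t]$ still satisfies $\int(1\wedge x)\di\nu<\infty$. Then $Y_2\le Y$, so \eqref{eq:operator-comparison} gives
\[
\theta((t,\infty))\ge\pi_{\nu_\theta|_{(t,\infty)}}((t,\infty)).
\]
The jump measure here has total mass $\nu_\theta((t,\infty))=\nu_\rho(\{|x|>s\})\le1$.

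\textbf{Step 4: the compound Poisson tail lower bound — the main obstacle.} Step 3 needs this bound with open tails and at the threshold itself. Revisit the proof of \Cref{lem:compound-tail} with $\lambda=\nu_\theta|_{(t,\infty)}$. One has $X_R\ge t\,M((t,R])$, where $M((t,R])$ has law $\MP_q$ with $q=\lambda((t,R])\le1$. The lower bound on the density shows $\MP_q((1,\infty))\ge cq$, because the estimate there uses the interval $[1+\sqrt q/2,1+\sqrt q]$, which lies strictly above $1$. Hence
\[
\pi_{\lambda_R}((t,\infty))\ge cq\quad\text{and}\quad \pi_{\lambda_R}([t',\infty))\ge c\,\lambda((t,R])\ \text{for } t<t'\le t(1+\sqrt q/2).
\]
Passing to the limit $R\to\infty$ with the Portmanteau theorem for closed sets, and then letting $t'\downarrow t$, gives the open tail
\[
\pi_\lambda((t,\infty))\ge c\,\lambda((t,\infty)).
\]
To make the limit clean, I would use the closed set $[t',\infty)$ with $t'$ slightly bigger than $t$ and let $t'\downarrow t$ at the end.

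\textbf{Conclusion.} Combining Steps 2–4,
\[
\rho(\{|x|>s\})\ge c^2\,\nu_\rho(\{|x|>s\}),
\]
so $C'=c^{-2}$, and therefore $C=2c^{-2}$ proves the proposition.
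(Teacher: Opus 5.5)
Your proof is correct and follows the paper's argument: symmetrize via $\rho=\mu\boxplus\check\mu$, pass to the free regular law $\theta$ with $\rho^2=\MP_1\boxtimes\theta$, and apply \Cref{lem:compound-tail} twice, once to $\rho^2$ and once to the large-jump compound free Poisson part of $\theta$, which is stochastically dominated by $\theta$. The one difference is that reopening the proof of \Cref{lem:compound-tail} in your Step 4 is unnecessary: the paper applies the lemma as stated at thresholds $t'>t$ (where $\lambda([t',\infty))\le1$) and lets $t'\downarrow t$, exactly as you already did in Step 2, and this gives the open-tail bound at the threshold; also, your bookkeeping actually yields the constant $C'$ rather than $2C'$.
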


\begin{proof}
First let $\theta$ be free regular, and fix $s>0$ such that $\nu_\theta((s,\infty))\le1$. The compound free Poisson part with jump measure $\nu_\theta|_{(s,\infty)}$ is stochastically dominated by $\theta$. Thus there is a universal constant $c_1>0$ such that, for every $t>s$, \eqref{eq:compound-tail-lower} and stochastic domination give $\nu_\theta([t,\infty))\le c_1\theta([t,\infty))$. Letting $t\downarrow s$, we obtain
\begin{equation}\label{eq:regular-upper-tail}
\nu_\theta((s,\infty))\le c_1\theta((s,\infty)).
\end{equation}
Now let $\rho\in\ID^{\boxplus}(\RR)$ be symmetric and suppose $\nu_\rho(\{|x|>s\})\le1$. For the free regular law $\theta$ in \eqref{eq:ahs-relations}, one has $\nu_\theta((s^2,\infty))=\nu_\rho(\{|x|>s\})$ and $\rho^2((s^2,\infty))=\rho(\{|x|>s\})$. Applying \eqref{eq:regular-upper-tail} at $s^2$ and then \eqref{eq:compound-tail-lower} to the compound free Poisson law $\rho^2=\MP_1\boxtimes\theta$ gives
\begin{equation}\label{eq:symmetric-upper-tail}
\nu_\rho(\{|x|>s\})\le c_2\rho(\{|x|>s\}),
\end{equation}
for some universal constant $c_2>0$.
Finally, put $\rho=\mu\boxplus\check\mu$ and $s=2r$. Its free L{\'e}vy measure is $\nu+\check\nu$, so the hypothesis allows us to use \eqref{eq:symmetric-upper-tail}. Moreover, \eqref{eq:tail-triangle} gives $\rho(\{|x|>2r\})\le2\mu(\{|x|>r\})$. Since $\nu_\rho(\{|x|>2r\})=2\nu(\{|x|>2r\})$, the claim follows.
\end{proof}

\begin{proposition}\label{prop:reverse-levy-tail}
Let $\mu\in\ID^{\boxplus}(\RR)$ have free characteristic triplet $(\eta,a,\nu)$. There exists $B_\mu\ge1$ such that, whenever $r\ge1$ and $\nu(\{|x|>r\})\le1$,
\[\mu(\{|x|>B_\mu r\})\le\nu(\{|x|>r\}).\]
\end{proposition}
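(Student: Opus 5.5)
The plan is to split off the jumps of size larger than $r$, so that $\mu$ becomes a free convolution of a law with support in $[-B_\mu r,B_\mu r]$ and a compound free Poisson law whose nonzero part has total weight $\nu(\{|x|>r\})$. The tail estimate then follows from the subadditivity of generalized singular numbers, \eqref{eq:singular-number-sum}.

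\emph{Step 1 (decomposition).} Fix $r\ge1$ with $q\coloneqq\nu(\{|x|>r\})\le1$, and put $\lambda_r=\nu|_{\{|x|>r\}}$. Since $r\ge1$, the compensator $zx\1_{[-1,1]}(x)$ in \eqref{eq:free-triplet} does not see $\lambda_r$. Hence
\[C_\mu^{\boxplus}(z)=C_{\mu_r}^{\boxplus}(z)+\int_\RR\frac{zx}{1-zx}\di{\lambda_r}(x),\]
where $\mu_r$ has triplet $(\eta,a,\nu|_{[-r,r]})$. So $\mu=\mu_r\boxplus\pi_{\lambda_r}$. Realize this as $X=Y+Z$ with $Y,Z$ free, $\mu_Y=\mu_r$, and $Z=\int x\di{M}(x)$ for a free Poisson random measure $M$ with intensity $\lambda_r$ (\Cref{thm:free-Poisson-random-measures}). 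The operator $Z$ vanishes on the kernel projection of $M(\{|x|>r\})$, which has law $\MP_q$; this kernel has trace $1-q$ because $q\le1$. Therefore $\tau(\1_{(0,\infty)}(|Z|))\le q$, that is, $\snum{Z}(q)=0$. This is the same mechanism as in the proof of \Cref{lem:positive-cp-moment}; the positive and negative jumps are handled by splitting $\lambda_r$ into its two halves, or directly, since $Z$ is supported under that projection.

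\emph{Step 2 (uniform norm bound, the main obstacle).} Show that $\|Y\|\le B_\mu r$ for all $r\ge1$, with $B_\mu$ independent of $r$. The key point is that the first two parameters of $\mu_r$ grow at most linearly in $r$:
\begin{align*}
|\text{mean}|&\le|\eta|+r\,\nu(\{|x|>1\}),\\
a+\int_{[-r,r]}x^2\di{\nu}&\le a+\int_{[-1,1]}x^2\di{\nu}+r^2\nu(\{|x|>1\}),
\end{align*}
and both right-hand sides are $O(r)$ in mean and $O(r^2)$ in variance with constants depending only on $\mu$.

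I would build $Y$ as a sum of free pieces: the drift; a semicircular element of variance $a$, with norm $2\sqrt a$; and $\int_{[-r,r]}x\di{\widetilde M}$ for the jumps, suitably compensated. For the positive jumps in $(0,r]$, use the layer-cake formula as in \Cref{lem:positive-cp-moment}:
\[\int_{(0,r]}t\di{M}(t)=\int_0^rM((u,r])\di{u},\qquad \Bigl\|\int_0^rM((u,r])\di{u}\Bigr\|\le\int_0^r\bigl(1+\sqrt{W(u)}\bigr)^2\di{u}.\]
The difficulty is that the small jumps near $0$ have infinite mass, so this raw bound fails there and compensation is needed. My first attempt is to rescale by $1/r$, so that the Lévy measure is supported in $[-1,1]$, and use the known fact that a freely infinitely divisible law with Lévy measure in $[-1,1]$ has support within $|\text{mean}|+C(\sqrt{\text{variance}}+1)$. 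This fact follows from analyticity of the $R$-transform on $|z|<1$ with free cumulants $|\kappa_n|\le\text{var}$ for $n\ge2$ \cite{BV1992,HP2000,BG2006}. Scaling back gives $\|Y\|\le|\text{mean}|+C(\sqrt{\text{var}}+r)\le B_\mu r$.

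\emph{Step 3 (conclusion).} By \eqref{eq:singular-number-sum},
\[\snum{X}(q)\le\snum{Y}(0)+\snum{Z}(q)=\|Y\|\le B_\mu r.\]
Hence $d_X(B_\mu r)\le q$, which is exactly $\mu(\{|x|>B_\mu r\})\le\nu(\{|x|>r\})$.
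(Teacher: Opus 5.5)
Your proposal is correct and takes essentially the paper's route: you split off the compound free Poisson part with jump measure $\lambda_r=\nu|_{\{|x|>r\}}$, which puts mass at most $q=\nu(\{|x|>r\})$ away from $0$; you bound the remaining part in norm by $O(r)$ through free cumulant estimates; and you conclude by subadditivity of singular numbers. Your rescaled support bound is exactly the paper's estimate $|\kappa_n(Y_r)|\le(K_\mu r)^n$ combined with the moment--cumulant formula and $|NC(n)|\le4^n$, so write that out rather than attributing it to analyticity of the $R$-transform. Also, \Cref{thm:free-Poisson-random-measures} realizes $\pi_{\lambda_r}$ as $\int x\,\di{M}(x)$ only when $\int_{|x|>r}|x|\,\di{\nu}(x)<\infty$, which fails for example for the Cauchy law with $\di{\nu}(x)=\di{x}/(\pi x^2)$. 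So obtain $\pi_{\lambda_r}(\RR\setminus\{0\})\le q$ instead by truncating at $R$ and applying the Portmanteau theorem to the closed set $\{0\}$, or directly from $\pi_{\lambda_r}=\MP_q\boxtimes(\lambda_r/q)$.
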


\begin{proof}
Split the free L{\'e}vy--Khintchine representation at the scale $r\ge1$ and realize $\mu$ as the law of the sum of freely independent selfadjoint operators $Y_r$ and $J_r$, where $J_r$ has the compound free Poisson law with jump measure $\nu|_{\{|x|>r\}}$, while $Y_r$ contains the drift, the semicircular part, and the jumps in $\{|x|\le r\}$.

Let $K_\mu=1+|\eta|+a+\int_{|x|\le1}x^2\di{\nu}(x)+\nu(\{|x|>1\})$. Writing $\kappa_n$ for the $n$th free cumulant, the free cumulants of $Y_r$ satisfy $|\kappa_n(Y_r)|\le(K_\mu r)^n$ for $n\ge1$. Indeed,
\begin{align*}
\kappa_1(Y_r)&=\eta+\int_{1<|x|\le r}x\di{\nu}(x),\\
\kappa_2(Y_r)&=a+\int_{|x|\le r}x^2\di{\nu}(x),\\
\kappa_n(Y_r)&=\int_{|x|\le r}x^n\di{\nu}(x),\quad n\ge3.
\end{align*}
For $n\ge3$, use $|x|^n\le r^{n-2}x^2$ on $|x|\le1$ and $|x|^n\le r^n$ on $1<|x|\le r$. The moment-cumulant formula and the bound $4^n$ for the number of noncrossing partitions of $n$ points then give $\supp\mu_{Y_r}\subseteq[-4K_\mu r,4K_\mu r]$.

The compound free Poisson law of $J_r$ assigns mass $1-\nu(\{|x|>r\})$ to zero, and hence $\mu_{J_r}(\{|x|>r\})\le\nu(\{|x|>r\})$. Together with $\supp\mu_{Y_r}\subseteq[-4K_\mu r,4K_\mu r]$, \eqref{eq:tail-triangle} yields $\mu(\{|x|>(4K_\mu+1)r\})\le\nu(\{|x|>r\})$.
Thus the result holds with $B_\mu=4K_\mu+1$.
\end{proof}

\begin{proof}[Proof of \Cref{thm:intro-tail}]
The statement follows from \Cref{prop:upper-levy-tail,prop:reverse-levy-tail}.
\end{proof}

\section{L{\'e}vy processes and fractional free convolution powers}
\label{sec:partial-semigroups}

We begin with probability measures on $[0,\infty)$ and reduce the general case to that setting.

\begin{lemma}\label{lem:positive-partial-moment}
Let $\lambda$ be a probability measure on $[0,\infty)$ and let $g$ be increasing and freely submultiplicative. For every $t\ge1$,
\[M_g(\lambda)<\infty\quad\Longleftrightarrow\quad M_g(\lambda^{\boxplus t})<\infty.\]
If these conditions hold, then for every $T\ge1$,
\[\lim_{R\to\infty}\sup_{1\le t\le T}\int_{(R,\infty)}g(x)\di{\lambda^{\boxplus t}}(x)=0.\]
\end{lemma}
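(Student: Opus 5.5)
Both implications come from the compression realization $\lambda^{\boxplus t}=\mu_{tpXp}$, where $X\ge0$ has law $\lambda$ and $p$ is a projection free from $X$ with $\tau(p)=1/t$.

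\emph{Forward direction.} Write $t=n+s$ with $n\in\mathbb{N}$ and $s\in[0,1)$. If $n\ge 1$ and $s\ge 1$ were possible we would use $\lambda^{\boxplus t}=\lambda^{\boxplus (t-1)}\boxplus\lambda$, but fractional powers below $1$ are not available. So we proceed differently: for $1\le t\le 2$ we write $\lambda^{\boxplus 2}=\lambda^{\boxplus t}\boxplus\lambda^{\boxplus(2-t)}$, which again needs powers below $1$. A cleaner route avoids free submultiplicativity and uses positivity. Choose an integer $m\ge t$. Then $(\lambda^{\boxplus t})^{\boxplus(m/t)}=\lambda^{\boxplus m}$ with $m/t\ge1$, and $\lambda^{\boxplus m}$ is the $m$-fold free convolution. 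By \eqref{eq:free-submult}, $M_g(\lambda^{\boxplus m})\le K_g^{m-1}M_g(\lambda)^m<\infty$. It therefore suffices to prove the following \emph{compression claim}: for positive $\kappa$ and $u\ge1$, $M_g(\kappa^{\boxplus u})<\infty$ implies $M_g(\kappa)<\infty$, with uniform control. Applying this with $\kappa=\lambda^{\boxplus t}$ and $u=m/t$ gives the forward direction.

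\emph{Backward direction.} Here we need $M_g(\lambda^{\boxplus t})<\infty\Rightarrow M_g(\lambda)<\infty$, which is again the compression claim, now with $\kappa=\lambda$ and $u=t$. So both directions reduce to the same claim.

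\emph{Proof of the compression claim.} Let $X\ge0$ have law $\kappa$, let $p$ be free from $X$ with $\tau(p)=1/u$, and set $Y=upXp\ge0$, which has law $\kappa^{\boxplus u}$. The inequality \eqref{eq:fractional-tail-upper} gives only the wrong direction, so we need a lower tail bound $\kappa^{\boxplus u}((r,\infty))\ge c\,\kappa((Cr,\infty))$. The natural positive mechanism is this: the projection $P=\1_{(r,\infty)}(X)$ with $\tau(P)=w$ small satisfies $pXp\ge r\,pPp$. The law of $pPp$ in $(p\MM p,\tau_p)$ is a free compression of a Bernoulli law, and by free Poisson-type asymptotics (the analogue of the $\MP_q([1,\infty))\ge cq$ estimate in \Cref{lem:compound-tail}) it puts mass at least $c\,\tau(P)$, relative to $\tau$, on eigenvalues of size at least $1/u$. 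More precisely, $pPp$ has rank at most $\min(\tau(p),w)$, and for small $w$ its spectrum concentrates near $\tau(p)=1/u$. Indeed, $\mu_{pPp}$ equals $\mathrm{Ber}(w)\boxtimes\mathrm{Ber}(1/u)$, an explicit law, and I would check from the explicit free Jacobi density that its mass on $[1/(2u),1]$ is at least $c_uw$ when $w\le w_0(u)$. This yields $\kappa^{\boxplus u}((r/2,\infty))\ge c_u\,\kappa((r,\infty))$ for large $r$. Then \eqref{eq:integral-general-g-inequality} together with the shift bound $g(2x)\le K_gg(x)^2$ gives an estimate on $M_g(\kappa)$ only up to $g^2$, which is too weak. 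So I would use the scaling $g(x)\le K_g\,g(x/2)\,g(x/2)$ differently: first apply the lower bound with dilation $1$ rather than $2$, which is possible by choosing the threshold $1/u$ exactly, since $upPp$ has spectrum reaching $(1+\sqrt{\cdot})^2$-type edges exceeding $1$ with mass comparable to $w$. This dilation-free lower bound is the main obstacle, and I would attack it via the explicit free multiplicative convolution of two Bernoulli laws.

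\emph{Uniform integrability.} The tail lower bound holds with constants uniform for $u\in[1,T]$. Combining it with \eqref{eq:fractional-tail-upper} and the fixed dilation factor $t\le T$, we get
\[\int_{(R,\infty)}g\,\di{\lambda^{\boxplus t}}\le T\int_{(R/T,\infty)}g(Tx)\,\di{\lambda}(x).\]
We control $g(Tx)\le K_g^{k}g(x)^{2^k}$ only if higher powers are integrable, so instead we bound the left side via free submultiplicativity. Write $\lambda^{\boxplus m}=\lambda^{\boxplus t}\boxplus\lambda^{\boxplus(m-t)}$ for $m-t\ge1$, and use stochastic domination $\lambda^{\boxplus t}\le\lambda^{\boxplus m}$. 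This holds for positive operators, since $Y\le Y+Y'$ with $Y'\ge0$ free, via \eqref{eq:operator-comparison}. Taking a fixed $m\ge T+1$, the tails of all $\lambda^{\boxplus t}$ with $t\in[1,T]$ are dominated by those of the single integrable law $\lambda^{\boxplus m}$, which gives uniform integrability. The same domination, applied as $\lambda\le\lambda^{\boxplus t}$ when $t\ge2$, also gives the backward direction without the compression claim whenever $t\ge2$. For $t\in[1,2)$ we use $\lambda^{\boxplus 2t}\ge\lambda^{\boxplus t}$ together with $\lambda^{\boxplus 2t}=(\lambda^{\boxplus t})^{\boxplus 2}$, whose $g$-moment is finite by \eqref{eq:free-submult}. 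Finally, $\lambda^{\boxplus 2t}\ge\lambda$ since $2t\ge2$, which settles the backward direction completely. The forward direction is then simply $\lambda^{\boxplus t}\le\lambda^{\boxplus m}$ with $M_g(\lambda^{\boxplus m})<\infty$. Thus the entire lemma reduces to domination by positivity, and the compression claim is unnecessary.
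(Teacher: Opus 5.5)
Your final paragraph is correct and is essentially the paper's proof: positivity gives $\lambda^{\boxplus t}\le\lambda^{\boxplus m}$ for an integer $m\ge t+1$ (or $m\ge T+1$, uniformly in $t\in[1,T]$) and $\lambda\le\lambda^{\boxplus 2t}$, while free submultiplicativity gives $M_g(\lambda^{\boxplus m})\le K_g^{m-1}M_g(\lambda)^m$ and $M_g(\lambda^{\boxplus 2t})\le K_gM_g(\lambda^{\boxplus t})^2$. The only difference is that the paper uses $2t$ for every $t\ge1$ instead of treating $t\ge2$ separately. The earlier compression-claim route via $\mathrm{Ber}(w)\boxtimes\mathrm{Ber}(1/u)$ is never completed and is not needed, so you should delete it and present only the domination argument.
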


\begin{proof}
Suppose first that $M_g(\lambda)<\infty$, fix $t\ge1$, and choose an integer $n\ge t+1$. Then $\lambda^{\boxplus n}=\lambda^{\boxplus t}\boxplus\lambda^{\boxplus(n-t)}$. Since all terms are supported on $[0,\infty)$, \eqref{eq:operator-comparison} shows that $\lambda^{\boxplus t}$ is stochastically dominated by $\lambda^{\boxplus n}$. Repeated applications of free submultiplicativity give $M_g(\lambda^{\boxplus n})<\infty$, and the preceding stochastic domination yields $M_g(\lambda^{\boxplus t})<\infty$.

Conversely, if $M_g(\lambda^{\boxplus t})<\infty$, free submultiplicativity gives $M_g(\lambda^{\boxplus2t})<\infty$. Since $\lambda^{\boxplus2t}=\lambda\boxplus\lambda^{\boxplus(2t-1)}$, positivity implies again that $\lambda$ is stochastically dominated by $\lambda^{\boxplus2t}$. Thus, $M_g(\lambda)\le M_g(\lambda^{\boxplus2t})<\infty$.

For uniform integrability, choose an integer $N\ge T+1$. For $1\le t\le T$, positivity in $\lambda^{\boxplus N}=\lambda^{\boxplus t}\boxplus\lambda^{\boxplus(N-t)}$ shows that $\lambda^{\boxplus t}$ is stochastically dominated by $\lambda^{\boxplus N}$. Applying this domination to the increasing truncated function $x\mapsto g(x)\1_{(R,\infty)}(x)$ implies the claim.
\end{proof}

\begin{lemma}\label{lem:absolute-compression}
Let $X$ be selfadjoint with law $\mu$, let $\lambda$ be the law of $|X|$, and let $t\ge1$. Then
\begin{equation}\label{eq:absolute-compression-tail}
\mu^{\boxplus t}(\{|x|>r\})\le2\lambda^{\boxplus t}((r,\infty)),\quad r>0,
\end{equation}
and hence, for every increasing $g\ge1$,
\begin{equation}\label{eq:absolute-compression-moment}
M_g(\mu^{\boxplus t})\le2M_g(\lambda^{\boxplus t}).
\end{equation}
\end{lemma}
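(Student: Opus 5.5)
We realize the fractional power by compression with a free projection. Take $X$ with law $\mu$ in $(\MM,\tau)$ and a projection $p\in\MM$ with $\tau(p)=1/t$, free from $X$. Then $tpXp$ has law $\mu^{\boxplus t}$ in $(p\MM p,\tau_p)$, as recalled before \eqref{eq:fractional-tail-upper}. Since $p$ is free from $X$, it is also free from $|X|$, because $|X|$ lies in the von Neumann algebra generated by $X$. Hence $tp|X|p$ has law $\lambda^{\boxplus t}$ in $(p\MM p,\tau_p)$. The whole argument will therefore take place inside the single compressed space $(p\MM p,\tau_p)$, where both operators live simultaneously.

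The key step is an operator inequality. From $-|X|\le X\le|X|$, compressing by $p$ preserves order, so
\[
-tp|X|p\le tpXp\le tp|X|p
\]
as affiliated operators in $p\MM p$. Apply \eqref{eq:operator-comparison} in $(p\MM p,\tau_p)$ to the upper inequality. This gives $\mu^{\boxplus t}((r,\infty))\le\lambda^{\boxplus t}((r,\infty))$. For the lower inequality, negate it to get $-tpXp\le tp|X|p$, and apply \eqref{eq:operator-comparison} again. This gives $\mu^{\boxplus t}((-\infty,-r))\le\lambda^{\boxplus t}((r,\infty))$. Adding the two bounds yields \eqref{eq:absolute-compression-tail}.

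For \eqref{eq:absolute-compression-moment}, write the measure $\nu_1$ of $|x|$ under $\mu^{\boxplus t}$, i.e. the push-forward of $\mu^{\boxplus t}$ by $x\mapsto|x|$. By \eqref{eq:absolute-compression-tail}, $\nu_1((r,\infty))\le2\lambda^{\boxplus t}((r,\infty))$ for every $r>0$.

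For $r<0$ this tail comparison fails, since the left side is $1$ and the right side is $2$. Both measures are carried by $[0,\infty)$, so negative $r$ is harmless. For $r=0$, the left side is at most $1\le 2$ only if the right side is at least $1/2$, which need not hold.

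To avoid the case $r=0$, we use the layer-cake formula instead of \eqref{eq:integral-general-g-inequality}. For increasing $g\ge1$, with $g(0)\ge1$,
\[
\int g\,d\nu_1 = g(0)+\int_{(0,\infty)}\nu_1((r,\infty))\,dg(r)\le g(0)+2\int_{(0,\infty)}\lambda^{\boxplus t}((r,\infty))\,dg(r).
\]
The right-hand side is at most $2M_g(\lambda^{\boxplus t})$, because $2M_g(\lambda^{\boxplus t})=2g(0)+2\int\lambda^{\boxplus t}((r,\infty))\,dg(r)$ and $g(0)\le 2g(0)$. Here $g$ is merely increasing, so we read $dg$ as its Lebesgue–Stieltjes measure. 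This is standard for monotone $g$ (right-continuity issues do not arise, as one can check with the tail sets used). This gives \eqref{eq:absolute-compression-moment}.

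The main obstacle is justifying that $tp|X|p$ has law $\lambda^{\boxplus t}$. This rests on freeness of $p$ from $|X|$, together with the compression model cited from \cite{NS1996,ST2022}. We also need the order-preservation $X\le Y\Rightarrow pXp\le pYp$ for unbounded affiliated operators. This can be handled through spectral truncations, or by noting that $pXp$ is defined in the sense of \cite{BV1993free}.
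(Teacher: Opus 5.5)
Your proof of \eqref{eq:absolute-compression-tail} is the paper's argument. You compress by a projection of trace $1/t$ that is free from $X$, hence from $|X|$. You use $-tp|X|p\le tpXp\le tp|X|p$ and apply \eqref{eq:operator-comparison} to each side in $(p\MM p,\tau_p)$. The problem is the moment step. There you discard \eqref{eq:integral-general-g-inequality} for reasons that are not correct, and you replace it with an identity that does not hold for general increasing $g$.

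Nothing fails for $r\le0$. For $r<0$ the comparison $\nu_1((r,\infty))\le2\lambda^{\boxplus t}((r,\infty))$ reads $1\le2$, which is true. At $r=0$ it also holds: let $r\downarrow0$ and use continuity from below along $(r,\infty)\uparrow(0,\infty)$, or note that \eqref{eq:operator-comparison} is valid at $r=0$ itself. So \eqref{eq:integral-general-g-inequality} applies directly to $\nu_1$ and $2\lambda^{\boxplus t}$, which is all the paper uses.

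Your formula $\int g\,d\nu_1=g(0)+\int_{(0,\infty)}\nu_1((r,\infty))\,dg(r)$ is exact only when $g$ is left-continuous and continuous at $0$. For $g=1+K\1_{(0,\infty)}$ it gives $1$, whereas the true value is $1+K\nu_1((0,\infty))$. Bounding this by $2M_g(\lambda^{\boxplus t})=2+2K\lambda^{\boxplus t}((0,\infty))$ for every $K$ requires exactly the $r=0$ comparison you said might fail. Likewise, at a jump where $g$ is right-continuous you need closed tails $[a,\infty)$. These follow from the open ones by letting $r\uparrow a$, but that is the check you waved away.

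The clean fix is to write $\int g\,d\nu_1=\int_0^\infty\nu_1(\{g>s\})\,ds$. Each superlevel set is empty, $[0,\infty)$, $(a,\infty)$ with $a\ge0$, or $[a,\infty)$ with $a>0$, and on each of these $\nu_1\le2\lambda^{\boxplus t}$ by the remarks above. This also shows that the hypothesis $g\ge1$ is not needed.
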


\begin{proof}
Let $p$ be free from $X$ with $\tau(p)=1/t$. In the $W^*$-probability space $(p\MM p,t\tau|_{p\MM p})$, the operators $tpXp$ and $tp|X|p$ have laws $\mu^{\boxplus t}$ and $\lambda^{\boxplus t}$, respectively. Since $-tp|X|p\le tpXp\le tp|X|p$, the positive and negative tails of $tpXp$ are each dominated by the positive tail of $tp|X|p$. This proves \eqref{eq:absolute-compression-tail} and then \eqref{eq:absolute-compression-moment}.
\end{proof}

\begin{lemma}\label{lem:symmetric-no-cancellation}
Let $\rho$ be a symmetric probability measure on $\RR$. Then, for every $r>0$,
\begin{equation}\label{eq:symmetric-no-cancellation}
\rho(\{|x|>r\})\le2\rho^{\boxplus2}(\{|x|>r\}).
\end{equation}
Consequently, $M_g(\rho)\le2M_g(\rho^{\boxplus2})$ for every increasing $g\ge0$.
\end{lemma}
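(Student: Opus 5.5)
The plan is to mimic the classical argument for symmetric independent variables, with \eqref{eq:tail-triangle} as the only operator-theoretic input. Realize $\rho$ as the law of two freely independent selfadjoint operators $X$ and $Y$ in a tracial $W^*$-probability space $(\MM,\tau)$, each with distribution $\rho$. The operators $X+Y$ and $X-Y=X+(-Y)$ both have law $\rho^{\boxplus2}$. Indeed, $-Y$ is free from $X$, and by symmetry $\mu_{-Y}=\check\rho=\rho$. So the law of $X-Y$ is $\rho\boxplus\rho=\rho^{\boxplus2}$.

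Next, write $2X=(X+Y)+(X-Y)$ and apply \eqref{eq:tail-triangle} with both thresholds equal to $r$:
\[
\rho(\{|x|>r\})=d_X(r)=d_{2X}(2r)\le d_{X+Y}(r)+d_{X-Y}(r)=2\rho^{\boxplus2}(\{|x|>r\}).
\]
This gives \eqref{eq:symmetric-no-cancellation}. The only point to check is that $d_{2X}(2r)=d_X(r)$, which is immediate from $|2X|=2|X|$ and the spectral theorem.

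For the moment consequence, let $\lambda_1$ be the push-forward of $\rho$ under $x\mapsto|x|$, and let $\lambda_2$ be twice the push-forward of $\rho^{\boxplus2}$ under the same map. For $r>0$ the tail inequality just proved reads $\lambda_1((r,\infty))\le\lambda_2((r,\infty))$. For $r\le0$ we have $\lambda_1((r,\infty))\le1\le2=\lambda_2((r,\infty))$, so the inequality holds for every $r\in\RR$. Then \eqref{eq:integral-general-g-inequality}, applied to the increasing function $h=g$ (extended by $g(0)$ on the negative axis, so that $h$ is increasing and nonnegative on $\RR$), yields
\[
M_g(\rho)=\int h\,\di{\lambda_1}\le\int h\,\di{\lambda_2}=2M_g(\rho^{\boxplus2}).
\]
There is no real obstacle. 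The only step requiring care is observing that symmetry makes $-Y$ a free copy with the same law, so that $X-Y$ also has law $\rho^{\boxplus2}$.
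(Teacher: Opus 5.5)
Your proposal is correct and is essentially the paper's own argument: two free copies $X,Y$ of $\rho$, symmetry to see that $X-Y$ also has law $\rho^{\boxplus2}$, and \eqref{eq:tail-triangle} applied to $2X=(X+Y)+(X-Y)$. One small slip in the moment step: at $r=0$ one has $\lambda_2((0,\infty))=2\rho^{\boxplus2}(\{x\neq0\})$, which need not equal $2$ (take $\rho=\delta_0$), so the inequality there should come from the case $r>0$ by letting $r\downarrow0$; alternatively, work with closed tails $[r,\infty)$, for which $r\le0$ genuinely gives $1\le2$.
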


\begin{proof}
Let $X,Y$ be free with law $\rho$. Symmetry gives $\mu_{X+Y}=\mu_{X-Y}=\rho^{\boxplus2}$. Since $2X=(X+Y)+(X-Y)$, \eqref{eq:tail-triangle} gives \eqref{eq:symmetric-no-cancellation} and hence the stated inequality.
\end{proof}

\begin{proposition}\label{prop:time-two}
Let $\mu$ be a probability measure on $\RR$. Then
\begin{equation}\label{eq:time-two-tail}
\mu(\{|x|>r\})\le5\mu^{\boxplus2}(\{|x|>r/2\}),\quad r>0.
\end{equation}
If $g$ is increasing and freely submultiplicative, then
\begin{equation}\label{eq:time-two-moment-lower}
M_g(\mu)\le M_g(\mu^{\boxplus2})+2K_gM_g(\mu^{\boxplus2})^2.
\end{equation}
In particular, $M_g(\mu)<\infty$ if and only if $M_g(\mu^{\boxplus2})<\infty$.
\end{proposition}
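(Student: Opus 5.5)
The plan is to realize $\mu$ by operators and write twice a copy of $X$ as a sum of two operators whose laws are controlled by $\mu^{\boxplus2}$. Let $X_1,X_2$ be free selfadjoint operators with law $\mu$, and set $A=X_1+X_2$ and $B=X_1-X_2$. Then $\mu_A=\mu^{\boxplus2}$, and $\mu_B=\rho\coloneqq\mu\boxplus\check\mu$ is symmetric. Since $2X_1=A+B$, applying \eqref{eq:tail-triangle} with $r+r=2r$ gives
\[
\mu(\{|x|>r\})\le\mu^{\boxplus2}(\{|x|>r\})+\rho(\{|x|>r\}),\qquad r>0 .
\]
This inequality is the backbone of both \eqref{eq:time-two-tail} and \eqref{eq:time-two-moment-lower}.

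The key step is to control the symmetric term $\rho$ by $\mu^{\boxplus2}$; this is the main obstacle, because $\rho$ is not itself a free convolution power of $\mu$. I will use \Cref{lem:symmetric-no-cancellation} to replace $\rho$ by $\rho^{\boxplus2}$ at the cost of a factor $2$. Then I will use commutativity and associativity of $\boxplus$ to write
\[
\rho^{\boxplus2}=\mu^{\boxplus2}\boxplus(\mu^{\boxplus2})^{\check{}},
\]
where $(\mu^{\boxplus2})^{\check{}}$ denotes the reflection of $\mu^{\boxplus2}$. Concretely, $\rho^{\boxplus2}$ is the law of $Z_1-Z_2$ with $Z_1,Z_2$ free and each of law $\mu^{\boxplus2}$.

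For the tail bound, I apply \eqref{eq:tail-triangle} once more with $r=r/2+r/2$, which gives $\rho^{\boxplus2}(\{|x|>r\})\le2\mu^{\boxplus2}(\{|x|>r/2\})$. Combining this with \Cref{lem:symmetric-no-cancellation} yields
\[
\rho(\{|x|>r\})\le4\mu^{\boxplus2}(\{|x|>r/2\}).
\]
Adding the bound $\mu^{\boxplus2}(\{|x|>r\})\le\mu^{\boxplus2}(\{|x|>r/2\})$ then gives the constant $5$ in \eqref{eq:time-two-tail}.

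For the moment bound, I integrate the backbone tail inequality against the increasing function $g$ using \eqref{eq:integral-general-g-inequality}, which gives $M_g(\mu)\le M_g(\mu^{\boxplus2})+M_g(\rho)$. Next, \Cref{lem:symmetric-no-cancellation} gives $M_g(\rho)\le2M_g(\rho^{\boxplus2})$. Since $M_g$ of a reflection equals $M_g$ of the original measure, \eqref{eq:free-submult} gives
\[
M_g(\rho^{\boxplus2})\le K_gM_g(\mu^{\boxplus2})^2 .
\]
Together these give \eqref{eq:time-two-moment-lower}.

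Finally, for the equivalence, \eqref{eq:time-two-moment-lower} shows that $M_g(\mu^{\boxplus2})<\infty$ implies $M_g(\mu)<\infty$. Conversely, \eqref{eq:free-submult} gives $M_g(\mu^{\boxplus2})\le K_gM_g(\mu)^2$, so $M_g(\mu)<\infty$ implies $M_g(\mu^{\boxplus2})<\infty$.
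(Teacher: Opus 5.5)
Your proposal is correct and matches the paper's proof step for step. You use the decomposition $2X_1=(X_1+X_2)+(X_1-X_2)$ with \eqref{eq:tail-triangle}, then \Cref{lem:symmetric-no-cancellation} to pass from $\rho$ to $\rho^{\boxplus2}=\alpha\boxplus\check\alpha$ with $\alpha=\mu^{\boxplus2}$, and finally a second application of \eqref{eq:tail-triangle} (giving the constant $5$) or of \eqref{eq:free-submult}; the only addition is that you spell out the easy direction $M_g(\mu^{\boxplus2})\le K_gM_g(\mu)^2$ of the final equivalence, which the paper leaves implicit.
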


\begin{proof}
Let $X,Y$ be free with law $\mu$, put $\alpha=\mu^{\boxplus2}$, and set $\rho=\mu\boxplus\check\mu$. From $2X=(X+Y)+(X-Y)$ and \eqref{eq:tail-triangle}, 
\begin{align}
\label{eq:sum-sym-tail-inequality}
\mu(\{|x|>r\})\le\alpha(\{|x|>r\})+\rho(\{|x|>r\}).
\end{align}
By \Cref{lem:symmetric-no-cancellation}, $\rho(\{|x|>r\})\le2\rho^{\boxplus2}(\{|x|>r\})$. Since $\rho^{\boxplus2}=\alpha\boxplus\check\alpha$, another application of \eqref{eq:tail-triangle} gives $\rho^{\boxplus2}(\{|x|>r\})\le2\alpha(\{|x|>r/2\})$. Hence $\mu(\{|x|>r\})\le5\alpha(\{|x|>r/2\})$, which is \eqref{eq:time-two-tail}.
Applying \eqref{eq:integral-general-g-inequality} to \eqref{eq:sum-sym-tail-inequality} gives $M_g(\mu)\le M_g(\alpha)+M_g(\rho)$. By \Cref{lem:symmetric-no-cancellation} and free submultiplicativity,
\[M_g(\rho)\le2M_g(\rho^{\boxplus2})=2M_g(\alpha\boxplus\check\alpha)\le2K_gM_g(\alpha)^2,\]
which proves \eqref{eq:time-two-moment-lower}.
\end{proof}

\begin{proof}[Proof of \Cref{thm:intro-partial}]
Let $\lambda$ be the law of $|X|$ when $X$ has law $\mu$. We first prove the moment equivalence. If $M_g(\mu)<\infty$, then \Cref{lem:positive-partial-moment,lem:absolute-compression} give $M_g(\mu^{\boxplus t})\le2M_g(\lambda^{\boxplus t})<\infty$.

Conversely, suppose $M_g(\mu^{\boxplus t})<\infty$. Choose an integer $k$ with $2^k\ge t$ and put $s=2^k/t\ge1$. Applying the implication just proved to $\mu^{\boxplus t}$ gives $M_g((\mu^{\boxplus t})^{\boxplus s})=M_g(\mu^{\boxplus2^k})<\infty$.
Iterating \Cref{prop:time-two} from time $2^k$ down to time $1$ gives $M_g(\mu)<\infty$.

For the upper tail bound, \eqref{eq:fractional-tail-upper} gives
\[\mu^{\boxplus t}(\{|x|>r\})\le T\mu(\{|x|>r/T\}),\quad1\le t\le T.\]
Choose an integer $k$ with $2^k\ge T$. Iterating \eqref{eq:time-two-tail} gives
\[\mu(\{|x|>R\})\le5^k\mu^{\boxplus2^k}(\{|x|>R/2^k\}).\]
For $1\le t\le T$, set $s=2^k/t\in[1,2^k]$. Applying \eqref{eq:fractional-tail-upper} to the law $\mu^{\boxplus t}$ and the exponent $s$ gives
\[\mu^{\boxplus2^k}(\{|x|>R/2^k\})\le s\mu^{\boxplus t}(\{|x|>tR/4^k\}).\]
Taking $R=4^kr/t$ and using $s\le2^k$ and $t\ge1$, we obtain
\[
\mu^{\boxplus t}(\{|x|>r\})\ge10^{-k}\mu(\{|x|>4^kr/t\})\ge10^{-k}\mu(\{|x|>4^kr\}),
\]
uniformly for $1\le t\le T$.
Together with the upper bound, this yields \eqref{eq:intro-partial-tail} after enlarging a single constant.

For uniform integrability, using \eqref{eq:absolute-compression-tail}, we have
\[\sup_{1\le t\le T}\int_{|x|>R}g(|x|)\di{\mu^{\boxplus t}}(x)\le2\sup_{1\le t\le T}\int_{(R,\infty)}g(x)\di{\lambda^{\boxplus t}}(x),\]
and the right-hand side tends to zero by \Cref{lem:positive-partial-moment}.
\end{proof}

We now turn to free convolution semigroups.

Let $(\mu_t)_{t\ge0}$ be a free convolution semigroup of freely infinitely divisible distributions, and let $(\eta,a,\nu)$ be the free characteristic triplet of $\mu_1$. Thus the triplet of $\mu_t$ is $(t\eta,ta,t\nu)$.

\begin{proof}[Proof of \Cref{thm:intro-levy-process}]
Let $0<s<t$. Since $\mu_t=(\mu_s)^{\boxplus t/s}$, \Cref{thm:intro-partial} gives $M_g(\mu_s)<\infty\quad\Longleftrightarrow\quad M_g(\mu_t)<\infty$, which proves part \textup{(i)}.

For part \textup{(ii)}, assume that $g$ is increasing and satisfies \eqref{eq:condition-Delta}. Since the free L{\'e}vy measure of $\mu_t$ is $t\nu$, \Cref{thm:intro-Delta-equivalence} gives, for every $t>0$,
\begin{align*}
M_g(\mu_t)<\infty
\quad\Longleftrightarrow\quad
\int_{|x|>1}g(|x|)\di{\nu}(x)<\infty.
\end{align*}
Thus finiteness at one positive time is equivalent to finiteness at every positive time. We now assume that these equivalent conditions hold.

Fix $T>0$, and let $\lambda$ be the image of $\nu|_{\{|x|>1\}}$ under $x\mapsto|x|$. As before, realize $\mu_t$ as the law of $A_t+J_t$, where $J_t=P_t^+-P_t^-$ and the variables $A_t,P_t^+,P_t^-$ are freely independent. Here $A_t$ contains the drift, the semicircular part, and the small jumps with free L{\'e}vy measure $t\nu|_{[-1,1]}$, while $P_t^+$ and $P_t^-$ are positive compound free Poisson variables corresponding to $t\nu|_{(1,\infty)}$ and to the image of $t\nu|_{(-\infty,-1)}$ under $x\mapsto-x$, respectively.

As in the proof of \Cref{prop:reverse-levy-tail}, we have $D_T\coloneqq\sup_{0\le t\le T}\|A_t\|<\infty$. Indeed, if $K_T\coloneqq1+T\left(|\eta|+a+\int_{|x|\le1}x^2\di{\nu}(x)\right)$, then the free cumulants of $A_t$ satisfy $|\kappa_n(A_t)|\le K_T^n$ uniformly for $0\le t\le T$ and $n\ge1$. Hence the moment-cumulant formula as in the proof of \Cref{prop:reverse-levy-tail} gives $D_T\le4K_T$.

Put $Q_t=P_t^++P_t^-$. Then $-Q_t\le J_t\le Q_t$ and $\mu_{Q_t}=\pi_{t\lambda}$. Moreover,
$\pi_{T\lambda}=\pi_{t\lambda}\boxplus\pi_{(T-t)\lambda}$, so positivity implies that $\pi_{t\lambda}$ is stochastically dominated by $\pi_{T\lambda}$. Since $g$ is increasing, \eqref{eq:integral-g-inequality} gives
\begin{align*}
\int_{|x|>R}g(|x|)\di{\mu_{J_t}}(x)
\le2\int_{(R,\infty)}g(x)\di{\pi_{T\lambda}}(x).
\end{align*}
Since $\int g\di{\lambda}<\infty$, \Cref{lem:positive-cp-moment} gives $M_g(\pi_{T\lambda})<\infty$.

Since $\|A_t\|\le D_T$, \eqref{eq:tail-triangle} gives $\mu_t(\{|x|>R+D_T\})\le\mu_{J_t}(\{|x|>R\})$ for $R>0$. Hence, by \eqref{eq:integral-general-g-inequality} and \eqref{eq:fixed-shift},
\begin{align*}
\sup_{0\le t\le T}\int_{|x|>R+D_T}g(|x|)\di{\mu_t}(x)
&\le K_{D_T}\sup_{0\le t\le T}\int_{|x|>R}g(|x|)\di{\mu_{J_t}}(x)\\
&\le2K_{D_T}\int_{(R,\infty)}g(x)\di{\pi_{T\lambda}}(x)
\longrightarrow0.
\end{align*}
This proves uniform integrability with respect to $g$ on compact time intervals.

Finally, let $f$ be continuous and satisfy $|f(x)|\le K(1+g(|x|))$. Let $f_R=f\chi_R$, where $0\le\chi_R\le1$ is continuous, compactly supported, and equal to one on $[-R,R]$. Weak continuity gives continuity of $t\mapsto\int f_R\di{\mu_t}$. Since $g\ge1$, uniform integrability gives
\begin{align*}
\sup_{0\le t\le T}
\left|\int f\di{\mu_t}-\int f_R\di{\mu_t}\right|
&\le2K\sup_{0\le t\le T}\int_{|x|>R}g(|x|)\di{\mu_t}(x)
\longrightarrow0.
\end{align*}
Thus $t\mapsto\int f\di{\mu_t}$ is continuous.

For every $p>0$, the function $g(r)=(1+r)^p$ is increasing and satisfies \eqref{eq:condition-Delta}. Taking $f(x)=|x|^p$ proves the final assertion.
\end{proof}
\printbibliography

@article{AHS2013,
  author  = {Arizmendi, Octavio and Hasebe, Takahiro and Sakuma, Noriyoshi},
  title   = {On the Law of Free Subordinators},
  journal = {ALEA Lat. Am. J. Probab. Math. Stat.},
  volume  = {10},
  number  = {1},
  pages   = {271--291},
  year    = {2013}
}

@article{Aubrun2021,
  author  = {Aubrun, Guillaume},
  title   = {Principal Angles between Random Subspaces and Polynomials in Two Free Projections},
  journal = {Confluentes Math.},
  volume  = {13},
  number  = {2},
  pages   = {3--10},
  year    = {2021},
  doi     = {10.5802/cml.74}
}

@article{BG2005,
  author  = {Benaych-Georges, Florent},
  title   = {Classical and Free Infinitely Divisible Distributions and Random Matrices},
  journal = {Ann. Probab.},
  volume  = {33},
  number  = {3},
  pages   = {1134--1170},
  year    = {2005},
  doi     = {10.1214/009117904000000982}
}

@article{BG2006,
  author  = {Benaych-Georges, Florent},
  title   = {Taylor Expansions of {$R$}-Transforms, Application to Supports and Moments},
  journal = {Indiana Univ. Math. J.},
  volume  = {55},
  number  = {2},
  pages   = {465--482},
  year    = {2006},
  doi     = {10.1512/iumj.2006.55.2691}
}

@article{Biane1998,
  author  = {Biane, Philippe},
  title   = {Processes with Free Increments},
  journal = {Math. Z.},
  volume  = {227},
  number  = {1},
  pages   = {143--174},
  year    = {1998},
  doi     = {10.1007/PL00004363}
}

@article{BNT2002,
  author  = {Barndorff-Nielsen, Ole E. and Thorbj{\o}rnsen, Steen},
  title   = {Self-Decomposability and L{\'e}vy Processes in Free Probability},
  journal = {Bernoulli},
  volume  = {8},
  number  = {3},
  pages   = {323--366},
  year    = {2002}
}

@article{BnT2005Levy,
  author  = {Barndorff-Nielsen, Ole E. and Thorbj{\o}rnsen, Steen},
  title   = {The L{\'e}vy--It{\^o} Decomposition in Free Probability},
  journal = {Probab. Theory Related Fields},
  volume  = {131},
  number  = {2},
  pages   = {197--228},
  year    = {2005},
  doi     = {10.1007/s00440-003-0322-y}
}

@incollection{BNT2006,
  author    = {Barndorff-Nielsen, Ole E. and Thorbj{\o}rnsen, Steen},
  title     = {Classical and Free Infinite Divisibility and L{\'e}vy Processes},
  booktitle = {Quantum Independent Increment Processes II},
  editor    = {Franz, Uwe and Sch{\"u}rmann, Michael},
  series    = {Lecture Notes in Mathematics},
  volume    = {1866},
  pages     = {33--159},
  publisher = {Springer},
  address   = {Berlin},
  year      = {2006},
  doi       = {10.1007/11376637_2}
}

@article{BP1999,
  author  = {Bercovici, Hari and Pata, Vittorino},
  title   = {Stable Laws and Domains of Attraction in Free Probability Theory},
  journal = {Ann. of Math. (2)},
  volume  = {149},
  number  = {3},
  pages   = {1023--1060},
  year    = {1999},
  doi     = {10.2307/121080},
  note    = {With an appendix by Philippe Biane}
}

@article{BV1992,
  author  = {Bercovici, Hari and Voiculescu, Dan-Virgil},
  title   = {L{\'e}vy--Hin{\v c}in Type Theorems for Multiplicative and Additive Free Convolution},
  journal = {Pacific J. Math.},
  volume  = {153},
  number  = {2},
  pages   = {217--248},
  year    = {1992},
  doi     = {10.2140/pjm.1992.153.217}
}

@article{BV1993free,
  author  = {Bercovici, Hari and Voiculescu, Dan-Virgil},
  title   = {Free Convolution of Measures with Unbounded Support},
  journal = {Indiana Univ. Math. J.},
  volume  = {42},
  number  = {3},
  pages   = {733--773},
  year    = {1993},
  doi     = {10.1512/iumj.1993.42.42033}
}

@article{CCH2020,
  author  = {Chakrabarty, Arijit and Chakraborty, Sukrit and Hazra, Rajat Subhra},
  title   = {Regular Variation and Free Regular Infinitely Divisible Laws},
  journal = {Statist. Probab. Lett.},
  volume  = {156},
  pages   = {108607},
  year    = {2020},
  doi     = {10.1016/j.spl.2019.108607}
}

@article{CD2005,
  author  = {Cabanal-Duvillard, Thierry},
  title   = {A Matrix Representation of the {Bercovici--Pata} Bijection},
  journal = {Electron. J. Probab.},
  volume  = {10},
  number  = {18},
  pages   = {632--661},
  year    = {2005},
  doi     = {10.1214/EJP.v10-246}
}

@article{FK1986,
  author  = {Fack, Thierry and Kosaki, Hideki},
  title   = {Generalized {$s$}-Numbers of {$\tau$}-Measurable Operators},
  journal = {Pacific J. Math.},
  volume  = {123},
  number  = {2},
  pages   = {269--300},
  year    = {1986},
  doi     = {10.2140/pjm.1986.123.269}
}

@article{H2015,
  author  = {Huang, Hao-Wei},
  title   = {Supports of Measures in a Free Additive Convolution Semigroup},
  journal = {Int. Math. Res. Not. IMRN},
  volume  = {2015},
  number  = {12},
  pages   = {4269--4292},
  year    = {2015},
  doi     = {10.1093/imrn/rnu064}
}

@article{HM2013,
  author  = {Hazra, Rajat Subhra and Maulik, Krishanu},
  title   = {Free Subexponentiality},
  journal = {Ann. Probab.},
  volume  = {41},
  number  = {2},
  pages   = {961--988},
  year    = {2013},
  doi     = {10.1214/11-AOP706}
}

@book{HP2000,
  author    = {Hiai, Fumio and Petz, D{\'e}nes},
  title     = {The Semicircle Law, Free Random Variables and Entropy},
  series    = {Mathematical Surveys and Monographs},
  volume    = {77},
  publisher = {American Mathematical Society},
  address   = {Providence, RI},
  year      = {2000},
  isbn      = {978-0-8218-2081-0}
}

@misc{KPZ2026,
  author        = {Kitagawa, Yu and Popa, Mihai and Zhong, Ping},
  title         = {Freely Infinitely Divisible {$R$}-Diagonal Elements and Brown Measure},
  year          = {2026},
  eprint        = {2605.25434},
  archiveprefix = {arXiv},
  primaryclass  = {math.OA},
}

@article{NS1996,
  author  = {Nica, Alexandru and Speicher, Roland},
  title   = {On the Multiplication of Free {$N$}-Tuples of Noncommutative Random Variables},
  journal = {Amer. J. Math.},
  volume  = {118},
  number  = {4},
  pages   = {799--837},
  year    = {1996},
  doi     = {10.1353/ajm.1996.0034},
  note    = {With an appendix by Dan-Virgil Voiculescu}
}

@book{NS2006,
  author    = {Nica, Alexandru and Speicher, Roland},
  title     = {Lectures on the Combinatorics of Free Probability},
  series    = {London Mathematical Society Lecture Note Series},
  volume    = {335},
  publisher = {Cambridge University Press},
  address   = {Cambridge},
  year      = {2006},
  doi       = {10.1017/CBO9780511735127}
}

@book{Sato2013,
  author = {given=Ken-iti, given-i=K, family=Sato},
  title     = {L{\'e}vy Processes and Infinitely Divisible Distributions},
  series    = {Cambridge Studies in Advanced Mathematics},
  volume    = {68},
  publisher = {Cambridge University Press},
  address   = {Cambridge},
  year      = {2013},
  isbn      = {978-1-107-65649-9},
  edition = {2},
}

@article{ST2022,
  author  = {Shlyakhtenko, Dimitri and Tao, Terence},
  title   = {Fractional Free Convolution Powers},
  journal = {Indiana Univ. Math. J.},
  volume  = {71},
  number  = {6},
  pages   = {2551--2594},
  year    = {2022},
  doi     = {10.1512/iumj.2022.71.9163},
  note    = {With an appendix by David Jekel}
}

@book{VDN1992,
  author    = {Voiculescu, Dan-Virgil and Dykema, Kenneth J. and Nica, Alexandru},
  title     = {Free Random Variables},
  series    = {CRM Monograph Series},
  volume    = {1},
  publisher = {American Mathematical Society},
  address   = {Providence, RI},
  year      = {1992}
}

@article{Voiculescu1986,
  author  = {Voiculescu, Dan-Virgil},
  title   = {Addition of Certain Non-Commuting Random Variables},
  journal = {J. Funct. Anal.},
  volume  = {66},
  number  = {3},
  pages   = {323--346},
  year    = {1986},
  doi     = {10.1016/0022-1236(86)90062-5}
}

@article{HT2014,
  author  = {Haagerup, Uffe and Thorbj{\o}rnsen, Steen},
  title   = {On the Free Gamma Distributions},
  journal = {Indiana Univ. Math. J.},
  volume  = {63},
  number  = {4},
  pages   = {1159--1194},
  year    = {2014},
  doi     = {10.1512/iumj.2014.63.5288}
}

@article{HU2023,
  author  = {Hasebe, Takahiro and Ueda, Yuki},
  title   = {On the Free L{\'e}vy Measure of the Normal Distribution},
  journal = {Electron. J. Probab.},
  volume  = {28},
  number  = {133},
  pages   = {1--19},
  year    = {2023},
  doi     = {10.1214/23-EJP1035}
}

\end{document}